\documentclass[12pt]{article}

\usepackage{amsfonts,amsmath,amssymb,latexsym,xcolor,mathrsfs,tikz,breqn,extarrows}
\usepackage{amsthm,authblk}
\usepackage{placeins}

\usepackage{amssymb}
\usepackage{amsmath}
\usepackage{amsthm}
\usepackage{tikz}
\usepackage{mathtools}
\usepackage{calc}
\usepackage{tikz}
\usepackage{pgfplots}
\usetikzlibrary{patterns,arrows,decorations.pathreplacing}
\usepackage{diagbox}
\usepackage{makecell}

\def\q{\hfill\rule{1ex}{1ex}}
\def\0{\emptyset}

\def\q{\hfill\rule{1ex}{1ex}}

\usepackage{amsthm}

\newtheorem{theorem}{Theorem}[section]
\newtheorem{definition}[theorem]{Definition}
\newtheorem{lemma}[theorem]{Lemma}

\newtheorem{observation}[theorem]{Observation}
\newtheorem{cor}[theorem]{Corollary}
\newtheorem{prop}[theorem]{Proposition}
\theoremstyle{definition}
\newtheorem{remark}[theorem]{Remark}
\theoremstyle{plain}

\newtheorem{prob}[theorem]{Problem}

\renewcommand\theenumi{\rm \roman{enumi}}
\renewcommand\labelenumi{\rm (\theenumi)}

\usepackage{algorithm}
\usepackage{algpseudocode}
\usepackage{graphicx}
\usepackage{amsmath}
\usepackage{pstricks,pgf,subcaption,amssymb}
\begin{document}
\title{\bf On rainbow saturated graphs with minimum number of edges
	 }
\author[]{
Yanzhe Qiu} 
\author[]{
Mei Lu} 
\author[]{
Yilin Pan} 
\author[]{
Yiduo Xu\thanks{Corresponding author. E-mail:\texttt{xyd23@mails.tsinghua.edu.cn}}\;}

\affil[]{\small Department of Mathematical Sciences, Tsinghua University, Beijing 100084, China}
\date{}

\maketitle\baselineskip 16.3pt

\begin{abstract}
	Let $F$ be a fixed graph without isolated vertices. An edge-colored graph is $F$-rainbow saturated if it contains no rainbow copy of $F$, but the addition of any missing edge in any color creates a rainbow copy of $F$. The rainbow saturation number $rsat(n,F)$ is the minimum number of edges in such a graph on $n$ vertices. We prove a dichotomy governed by isolated edges: if $F$ contains an isolated edge, then $rsat(n,F)=O(1)$ for all sufficiently large $n$, while if $F$ has no isolated edge, then $rsat(n,F)=\Theta(n)$. The linear lower bound is expressed in terms of a directed weight parameter $\eta(F)$ and establishes the linear half of the dichotomy; in several cases it also strengthens the Cameron--Puleo type coefficient. For the bounded half, we construct rainbow saturated graphs for targets of the form $H\cup K_2$.
	As an application of these constructions, we determine the asymptotically tight behavior for the rainbow saturation number of the generalized friendship graph $F_{t,p,q}=tK_p\vee K_q$, proving that
	\[
		rsat(n,F_{t,p,q})=(p+q-1)n+O(1)
	\]
	for fixed $t\geq 2$, $p\geq 2$ and $q\geq 1$ as $n\to\infty$.

	\end {abstract}
	
	{\bf Keywords.} rainbow saturation number, isolated edge, generalized friendship graph,\\
	edge-coloring

\section{Introduction}

All graphs considered are finite, simple and undirected, and we use standard graph-theoretic notation. In particular, $|G|$ and $e(G)$ denote the order and size of a graph $G$. Unions of graphs are understood to be vertex-disjoint, $tG$ denotes the union of $t$ copies of $G$, and $G_1\vee G_2$ denotes the join of $G_1$ and $G_2$. Let $G=(V,E)$ be a graph, where $E(G)$ is the edge set and  $V(G)$ is the vertex  set. For $S\subseteq V(G)$ and $u\in V(G)$, denote $N_S(u)=\{v\in S: uv\in E(G)\}$ and $d_S(u)=|N_S(u)|$. If $S=V(G)$, then we write
 $N_G(u)=N_{V(G)}(u)$, $d_G(u)=d_{V(G)}(u)$ and call $d_G(u)$ the degree of $u$. The minimum   degree of $G$ is  $\delta(G)=\min\{d_G(u):u\in V(G)\}$.  When there is no ambiguity, we ignore the subscript $G$ in above notation. If $S,C$ are two sets of $V(G)$, we denote $E(S,C)=\{uv\in E(G):u\in S,v\in C\}$. Let $[n]=\{1,\ldots,n\}$, where $n$ is a positive integer.

An \emph{edge-colored graph} is a pair $(G,\mathcal C)$, where $\mathcal C:E(G)\to\mathbb N$ is an arbitrary map; in particular, the coloring need not be proper. Denote $X_{\mathcal C}=\{\mathcal C(e):e\in E(G)\}$. A subgraph $H\subseteq G$ is \emph{rainbow} if the restriction of $\mathcal C$ to $E(H)$ is injective.
Whenever the coloring is specified or clear from the context, we suppress it from the notation and use $G$ to denote the edge-colored graph $(G,\mathcal C)$. 
When several edge-colored graphs are combined, their color sets are relabeled to be pairwise disjoint unless an explicit color identification is stated.
An edge set $E'\subseteq E(G)$ \emph{avoids} a color $\alpha$ if no edge in $E'$ has the color $\alpha$. Accordingly, a subgraph $H\subseteq G$ avoids a color $\alpha$ if $E(H)$ avoids the color $\alpha$, and we say it avoids a vertex $v$ if $v\notin V(H)$.

We say that $(G,\mathcal C)$ is \emph{$F$-rainbow free} if it contains no rainbow copy of $F$.
For a nonedge $e \notin E(G)$ and a color $\alpha\in\mathbb N$, write $(G,\mathcal C)+_\alpha e$ for the edge-colored graph obtained by adding $e$ to $G$ and coloring $e$ by the color $\alpha$.
When the coloring is clear on $G$, we write simply $G+_\alpha e$.
We say that $(G,\mathcal C)$ is \emph{$F$-rainbow-semisaturated} if $(G,\mathcal C)+_\alpha e$ contains a rainbow copy of $F$ containing $e$ for every nonedge $e$ and every $\alpha\in\mathbb N$.
It is \emph{$F$-rainbow saturated} if it is both $F$-rainbow free and $F$-rainbow-semisaturated.
The rainbow semisaturation and saturation numbers of $F$ are, respectively,
\[
\begin{aligned}
	rssat(n,F)&=\min\{e(G): |G|=n \text{ and $(G,\mathcal C)$ is $F$-rainbow-semisaturated for some $\mathcal C$}\},\\
	rsat(n,F)&=\min\{e(G): |G|=n \text{ and $(G,\mathcal C)$ is $F$-rainbow saturated for some $\mathcal C$}\}.
\end{aligned}
\]
Clearly $rsat(n,F)\geq rssat(n,F)$.

The saturation problem was initiated by Erd\H os, Hajnal and Moon~\cite{EHM}; we refer to~\cite{survey} for a survey.
A basic structural question is when the saturation number is degenerate.
K\'aszonyi and Tuza~\cite{KT} proved that an isolated edge is exactly the obstruction to linear growth: if $F$ contains an isolated edge, then $sat(n,F)$ is bounded by an absolute constant $c=c(F)$; otherwise $sat(n,F)=\Theta(n)$. Cameron and Puleo~\cite{Ca} later gave a stronger lower bound in terms of a local edge-weight parameter, with further refinements obtained by Buchanan and Rombach~\cite{BR}.

	The rainbow saturation number was introduced by Gir\~ao, Lewis and Popielarz~\cite{Gir}. Behague, Johnston, Letzter, Morrison and Ogden~\cite{BEH} proved that $rsat(n,F)=O(n)$ for every fixed graph $F$, and conjectured that there exists a constant $c=c(F)$ such that $rsat(n,F)=(c+o(1))n$ as $n\to\infty$. Chakraborti, Hendrey, Lund and Tompkins~\cite{CHA} proved that $\lim_{n\to\infty}rsat(n,K_r)/n$ exists and gave bounds on this limit. Xu, He and Lu~\cite{Xu} determined $rsat(n,C_4)=3\lceil(n-1)/2\rceil$ for $n\geq5$ and gave bounds for $rsat(n,C_r)$ when $r\geq5$.

	Throughout the paper, we only consider graphs  with no isolated vertices.
	A basic open problem is to determine which graphs have bounded rainbow saturation number. Our main theorem answers this problem.
\vspace{0.3em}	
		
\begin{theorem}\label{T11}
Let $F$ be a fixed graph. For all sufficiently large $n$, we have $rsat(n,F)=O(1)$
if $F$ contains an isolated edge; otherwise $rsat(n,F)=\Theta(n)$.
\end{theorem}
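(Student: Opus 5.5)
The theorem has three parts: a general linear upper bound, a linear lower bound when $F$ has no isolated edge, and a constant upper bound when $F$ has one. For the upper bound in the linear case I would simply invoke the result of Behague, Johnston, Letzter, Morrison and Ogden~\cite{BEH} that $rsat(n,F)=O(n)$ for every fixed $F$. The remaining work splits into the linear lower bound and the bounded construction.

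\textbf{Linear lower bound when $F$ has no isolated edge.} I would use a simple counting argument. Let $(G,\mathcal C)$ be $F$-rainbow saturated on $n$ vertices, and suppose $u,v$ are two isolated vertices of $G$. Adding $uv$ in any color must create a rainbow copy of $F$ containing $uv$. Since $u$ and $v$ have no other neighbours, $uv$ would be an isolated edge of that copy, which is impossible. Hence $G$ has at most one isolated vertex, so $e(G)\ge (n-1)/2$. Together with~\cite{BEH} this gives $rsat(n,F)=\Theta(n)$. This is only a crude constant; the sharper coefficient via a weight parameter $\eta(F)$ is not needed for the dichotomy.

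\textbf{Bounded case.} Write $F=H\cup K_2$. I would build $G$ as a fixed edge-colored gadget $G_0$ of order depending only on $F$, together with $n-|G_0|$ isolated vertices, so that $e(G)=e(G_0)=O(1)$. For every nonedge $e=xy$ and every color $\alpha$, I want to use $e$ itself as the $K_2$ component of a rainbow $F$. This requires $G_0$ to have the following properties.
\begin{enumerate}
\item For every pair of vertices $x,y$ and every color $\alpha$, $G_0-\{x,y\}$ contains a rainbow copy of $H$ avoiding $\alpha$. This single condition handles nonedges inside $G_0$, nonedges between $G_0$ and the isolated vertices, and nonedges among the isolated vertices.
\item $G_0$ contains no rainbow copy of $H\cup K_2$.
\end{enumerate}
My first attempt would be several disjoint copies of a rainbow-colored $K_h$ with $h=|H|$, all using the same palette (or a palette with built-in redundancy). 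Property (i) is easy for this gadget: delete $x,y$, and at least one copy remains that avoids both of them; if that copy contains color $\alpha$, I would give each copy two interchangeable color classes so that some copy avoids $\alpha$.

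\textbf{Main obstacle.} Property (ii) is where I expect the difficulty. A rainbow copy of $H\cup K_2$ has $e(H)+1$ distinct colors. Because the copies share a palette, I would try to show that projecting any rainbow subgraph onto one copy of $K_h$ forces too many vertices or edges into a $K_h$ that cannot accommodate $H$ plus a disjoint extra edge. When $H$ is disconnected, a rainbow copy can spread across several copies, so the palette sharing must be tuned carefully. One option is to color the copies so that any rainbow subgraph is essentially confined to one copy's color pattern, with $h$ chosen as the maximal order allowing $H$ but not $H\cup K_2$. If $H$ itself contains isolated edges, I would peel them off and induct on the number of isolated edges, using the paper's $H\cup K_2$ constructions as the inductive step.
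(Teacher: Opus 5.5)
Your linear half is correct, and it is more elementary than the paper's route. If $u,v$ were two isolated vertices of an $F$-rainbow saturated graph, then $uv$ would be a $K_2$-component of every copy of $F$ through it. So $G$ has at most one isolated vertex and $e(G)\ge (n-1)/2$, and together with \cite{BEH} this gives $\Theta(n)$. The paper instead invokes Theorem~\ref{TLower}, which gives the better coefficient $\eta(F)/2$, but that is more than the dichotomy needs. The gap is in the bounded half, which you never actually establish.

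\textbf{Condition (i) contradicts (ii).} Requiring (i) for \emph{every} pair $x,y$ is inconsistent with (ii). If $xy\in E(G_0)$ and $\alpha=\mathcal C(xy)$, then (i) yields a rainbow $H$ in $G_0-\{x,y\}$ avoiding $\alpha$. Adding $xy$ then gives a rainbow $H\cup K_2$ inside $G_0$. You may only ask for (i) for single vertices and for nonadjacent pairs.

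\textbf{The proposed gadget fails.} For complete $H$ it fails under any coloring. Suppose $G_0$ is a disjoint union of copies of $K_h$, and $\alpha$ is the color of an edge $f$ in the first copy. The rainbow $K_h$ avoiding $\alpha$ must be a different copy, and together with $f$ it forms a rainbow $K_h\cup K_2$. So already $F=2K_2$ is not handled. If the blocks are rainbow on a common palette, it fails for non-complete $H$ too: a spanning copy of $H$ in one block misses some palette color, and the edge of that color in another block completes a rainbow $H\cup K_2$.

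\textbf{What is missing.} You need the two constructions behind Theorem~\ref{TIsolated}:
\begin{itemize}
\item For non-complete $H$, use a single rainbow $K_{|H|+1}$ plus isolated vertices (Proposition~\ref{PNonClique}). Property (ii) holds by counting vertices, and (i) holds because $K_{|H|}$ minus any one edge still contains $H$.
\item For $H=K_s$, a genuinely different gadget is needed, such as $A_s$ (Definition~\ref{D516}, Lemma~\ref{L516}). For $s\ge 3$ the graph $A_s-\{v_{1,r},v_{2,r}\}$ contains no $K_s$ at all. The paper therefore saturates the nonedge $v_{1,r}v_{2,r}$ by placing the new edge \emph{inside} the $K_s$, not as the isolated $K_2$. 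Your framework always uses $e$ as the $K_2$-component, so it rules out exactly this construction.
\end{itemize}

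Your closing idea of peeling off isolated edges of $H$ inductively is unnecessary, since non-complete $H$ is already covered by the single clique. As written, it also presupposes the very $H\cup K_2$ constructions that are in question.
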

\vspace{0.3em}	

Theorem~\ref{T11} is an immediate consequence of the next two theorems.
The lower bound is obtained from the directed-weight estimate in the following theorem.
The parameter $\eta(F)\geq 1$ is defined in Section~2.
\vspace{0.3em}

\begin{theorem}\label{TLower}
Let $F$ be a graph with no isolated edge. Then for every $n \geq |F|$,
\[
	rssat(n,F) \geq \frac{\eta(F)}{2}n-\frac{\eta(F)^2-2\eta(F)+2}{2}.
\]
In particular, $rsat(n,F) \geq \frac{\eta(F)}{2}n-O(1)$.
\end{theorem}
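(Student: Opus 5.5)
The plan is a degree-counting argument driven by what a single added edge forces. Let $(G,\mathcal C)$ be $F$-rainbow-semisaturated on $n\ge |F|$ vertices. For a nonedge $xy$ and a color $\alpha$, the graph $G+_\alpha xy$ contains a rainbow copy of $F$ through $xy$, and $xy$ plays the role of some edge $uv$ of $F$. Then $x$ has at least $d_F(u)-1$ neighbours in $G$ carrying pairwise distinct colours, all different from $\alpha$, and the same holds for $y$ with $d_F(v)-1$.

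The freedom to pick $\alpha$ is what makes the weight \emph{directed}. I would choose $\alpha$ to be a colour already present at $x$, or at $y$. This removes one usable colour at one endpoint and forces the orientation of $uv$ that appears in the definition of $\eta(F)$ in Section~2. I expect that definition to be a minimum over ordered edges $(u,v)$ of such a degree requirement. The absence of isolated edges gives $d_F(u)+d_F(v)\ge 3$ for every edge, and hence $\eta(F)\ge 1$.

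The first step is a local lemma. Call a vertex \emph{light} if its degree is less than $\eta(F)$ (or its number of distinct colours, whichever quantity $\eta$ controls). I would show that no two light vertices can be nonadjacent. Indeed, adding a suitably coloured edge between two nonadjacent light vertices would require both endpoints to meet the degree demands of some edge $uv$ of $F$ in some orientation. By the minimality in the definition of $\eta(F)$, at least one endpoint would then need degree at least $\eta(F)$.

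Consequently the set $L$ of light vertices spans a clique. Since each vertex of $L$ has degree at most $\eta(F)-1$, we get $|L|\le \eta(F)$. A sharper version should use that the colours inside the clique must still permit the required rainbow extensions. I would hope this yields $|L|\le \eta(F)-1$, or a better degree lower bound inside $L$.

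The second step is the count:
\[
2e(G)=\sum_{v}d(v)\ \ge\ \eta(F)\,(n-|L|)+\sum_{v\in L}d(v)\ \ge\ \eta(F)\,(n-|L|)+|L|(|L|-1).
\]
Minimising the right-hand side over the admissible range of $|L|$ gives the constant. The stated value $\tfrac{(\eta-1)^2+1}{2}$ indicates that the extremal configuration is a light clique of size about $\eta-1$, possibly together with one further special vertex. So I would tune the light threshold to be exactly the one under which the bound for $|L|$ and the degree bound for vertices in $L$ together produce $\eta^2-2\eta+2$. The final claim about $rsat$ then follows immediately from $rsat(n,F)\ge rssat(n,F)$.

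The main obstacle is the local lemma. One must handle every possible embedding of the new edge $xy$ into $F$, and choose $\alpha$ adversarially so that the rainbow condition actually costs a colour at the correct endpoint. This is precisely where the directed parameter $\eta(F)$ must enter. The first thing I would try is to take $\alpha$ equal to the colour of an edge at the lower-degree endpoint. I would then check that, whichever edge $uv$ the edge $xy$ plays, the orientation forced on $uv$ is one that $\eta(F)$ accounts for.
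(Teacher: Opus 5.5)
Your plan fails at the local lemma, because $\eta(F)$ is not a degree requirement. In the paper, $\omega(u,v)=2|N_F(u)\cap N_F(v)|+|N_F(v)\setminus N_F(u)|=d_F(v)+|N_F(u)\cap N_F(v)|$ counts common neighbours twice. Hence $\eta(F)=\max\{\omega^*(F)-1,\omega_*(F)\}$ can exceed every degree of $F$: for $K_s$ with $s\ge 3$, $\eta=2s-3$ while all degrees are $s-1$.

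Concretely, take $F=K_3$, so $\eta=3$. Let $G=K_{2,n-2}$ with parts $\{a,b\}$ and $W$, and colour all edges distinctly. Then $G$ is triangle-free. Adding $ab$ in colour $\alpha$ is completed by any $w\in W$ for which neither $aw$ nor $bw$ has colour $\alpha$. Adding $w_iw_j$ is completed by whichever of $a,b$ has no $\alpha$-edge to $\{w_i,w_j\}$. So $G$ is $K_3$-rainbow saturated, yet its $n-2$ vertices of degree $2<\eta$ are pairwise nonadjacent. Light vertices therefore need not span a clique, and the count $2e(G)\ge\eta(F)(n-|L|)+\dots$ has no basis.

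Changing the threshold does not rescue this approach. Since this $G$ is a valid semisaturated graph, any argument that only lower-bounds the degrees of all but $O(1)$ vertices gives at best $e(G)\ge n-O(1)$ for $K_3$. The theorem asserts $\tfrac32 n-O(1)$.

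What you are missing is a way to pay twice for the common-neighbour term. The paper fixes a minimum-degree vertex $x^*$, puts $B=N(x^*)$ and $\overline B=V(G)\setminus B$, and for each $y\in\overline B\setminus\{x^*\}$ adds the nonedge $x^*y$. Any common neighbours of the images of $u$ and $v$ must lie in $B$, so $y$ gets that many neighbours in $B$. This yields two estimates:
\begin{itemize}
\item With a fresh colour, and the minimality of $d(x^*)$ handling the orientation of $uv$, one gets $2d_B(y)+d_{\overline B}(y)\ge\omega^*(F)-1$.
\item Choosing instead $\alpha$ to be a colour already on an edge at $y$ leaves that edge unused and gives $2d_B(y)+d_{\overline B}(y)\ge\omega_*(F)$. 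This is your adversarial-colour idea, applied at the far endpoint.
\end{itemize}
The weight $2$ on $d_B(y)$ is justified because $\sum_{b\in B}d(b)\ge e(B,\overline B)$. Therefore $2e(G)\ge\sum_{y\in\overline B}\bigl(2d_B(y)+d_{\overline B}(y)\bigr)\ge 2d(x^*)+\eta(F)\bigl(n-d(x^*)-1\bigr)$. The constant $\frac{\eta^2-2\eta+2}{2}$ comes from the case $d(x^*)\le\eta-1$, not from a light clique.
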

\vspace{0.1em}

The upper bound is supplied by a bounded construction for every graph containing an isolated edge.
\vspace{0.2em}

\begin{theorem}\label{TIsolated}
Let $F$ be a fixed graph containing an isolated edge. Then $rsat(n,F)=O(1)$ for all sufficiently large $n$.
\end{theorem}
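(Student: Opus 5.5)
Write $F=H\cup K_2$, where $K_2$ is one isolated edge of $F$ and $H$ is the rest of $F$ ($H$ may be empty or contain further isolated edges). The plan is to build an edge-colored graph $G_0$ on a bounded number of vertices and take $G=G_0\cup (n-|G_0|)K_1$, so that $e(G)=e(G_0)=O(1)$. We then check that $G$ is $F$-rainbow saturated. The isolated vertices make most nonedges trivial to handle, provided $G_0$ has the right property.

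\textbf{Target property of $G_0$.} We want $G_0$ to contain no rainbow copy of $F$, but for every color $\alpha$ to contain a rainbow copy of $H$ that avoids $\alpha$. Granting this:
\begin{enumerate}
\item A nonedge $e$ with at least one endpoint isolated in $G$, colored $\alpha$, yields a rainbow $F$. Take a rainbow $H$ in $G_0$ avoiding $\alpha$ and disjoint from $e$, and use $e$ as the isolated $K_2$.
\item Disjointness is automatic if both endpoints of $e$ are isolated. If exactly one endpoint $v$ lies in $G_0$, we strengthen the requirement: the rainbow $H$ avoiding $\alpha$ must also avoid any one prescribed vertex $v$. This is arranged by taking $G_0$ to contain many disjoint copies of the relevant rainbow structures.
\item Nonedges inside $G_0$ have to be handled by the design of $G_0$ itself.
\end{enumerate}

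\textbf{Construction of $G_0$.} I would choose $G_0$ as a complete graph $K_m$, with $m$ large, carrying a coloring that uses few colors in total. Adding an edge inside $G_0$ is then impossible because $K_m$ has no nonedges, so only the conditions above matter. The coloring must satisfy two things:
\begin{enumerate}
\item No rainbow $F$: guaranteed if the coloring uses fewer than $e(F)$ colors.
\item For every $\alpha$ and every vertex $v$, a rainbow $H$ avoiding $\alpha$ and $v$: this needs at least $e(H)+1=e(F)$ colors, so that one can be dropped.
\end{enumerate}
These two requirements conflict, which is exactly where the isolated edge matters. I will therefore use exactly $k=e(F)$ colors and choose the coloring so that no rainbow $F$ appears even with all colors present. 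For instance, color $K_m$ so that color classes are structured, e.g.\ one color forms a star, or make $G_0$ a disjoint union of small pieces. The aim is that any rainbow subgraph using all $k$ colors is forced to be connected or overlapping in a way incompatible with $H\cup K_2$. A concrete attempt is to let $G_0$ be a disjoint union of copies of a rainbow $H'$. Here $H'$ is $H$ with its components glued at a common vertex, so that dropping one copy still leaves a rainbow $H$ elsewhere, but a vertex-disjoint extra edge of a new color never exists. This is delicate because $H$ itself may contain isolated edges.

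\textbf{Main obstacle and fallback.} The main obstacle is making $G_0$ $F$-rainbow free while keeping an $\alpha$-avoiding rainbow $H$ for every $\alpha$, including the $e(H)$ colors that actually appear in $G_0$. Colors not appearing in $G_0$ are trivial. If the clique approach fails, I would induct on the number of isolated edges. Write $F=H_0\cup tK_2$ with $H_0$ free of isolated edges. Then use a bounded "core" consisting of a rainbow-saturated-like gadget for $H_0\cup(t-1)K_2$, together with color identifications ensuring that any rainbow copy of $H_0\cup(t-1)K_2$ inside the core exhausts the core's colors. Finally, verify the finitely many nonedge types inside the core by hand.
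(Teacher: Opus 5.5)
Your reduction is sound and is essentially the paper's framework: a bounded gadget $G_0$ plus isolated vertices, where $G_0$ has no rainbow $F$, contains for every vertex $v$ and color $\alpha$ a rainbow $H$ avoiding both, and has all internal nonedges saturated. The gap is that you never produce such a $G_0$, and the one you commit to cannot exist.

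Suppose $G_0$ uses $k$ colors, and for each used color $\beta$ let $J_\beta$ be a rainbow copy of $H$ avoiding $\beta$. Every $\beta$-edge must meet $V(J_\beta)$, since otherwise it completes a rainbow $H\cup K_2$. So the at most $k|H|$ vertices of $\bigcup_\beta V(J_\beta)$ cover every edge of $G_0$. For $G_0=K_m$ with $k=e(F)$ colors this forces $m\le e(F)|H|+1$. Hence ``$K_m$ with $m$ large and exactly $e(F)$ colors'' is self-contradictory.

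The glued-$H'$ attempt also fails:
\begin{itemize}
\item When $H$ is disconnected, $H'$ has fewer vertices than $H$, so it does not contain $H$.
\item Disjoint copies with disjoint color sets immediately give a rainbow $F$; for instance, two differently colored edges already form a rainbow $2K_2$.
\item Copies sharing one color set contain no rainbow $H$ avoiding a used color.
\end{itemize}
Your fallback induction leaves unspecified exactly the base gadget for $H_0\cup K_2$ that has to be built.

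The missing idea is that rainbow-freeness can come from counting vertices rather than colors. Suppose $H$ has no isolated vertices, $h=|H|$, and $H$ is not complete. Take $G_0=K_{h+1}$ with a rainbow coloring. It is too small to contain $F$ at all. For any $v$ and $\alpha$, the clique $K_{h+1}-v\cong K_h$ contains a rainbow $H$ avoiding $\alpha$, because you can map a non-edge of $H$ onto the at most one $\alpha$-edge.

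This breaks down precisely when $H=K_s$, since the only $K_s$ in $K_{s+1}-v$ uses all of its colors. There you need a gadget with deliberately repeated colors, such as the paper's $A_s$:
\begin{itemize}
\item for $s=2$, $K_4$ with two colors;
\item for $s\ge 3$, $K_{2s}$ minus the perfect matching $\{v_{1,k}v_{2,k}\}$, with each color on two ``mirror'' edges so that any $K_s\cup K_2$ repeats a color.
\end{itemize}
You must also check case by case that every internal nonedge is saturated in every color. Without these explicit constructions and verifications, the proposal does not prove the theorem.
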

\vspace{0.1em}

Let $F_{t,p,q}= tK_p \vee K_q$ be the generalized friendship graph. We call $K_q$ and $K_p $ the {\em center} and the {\em petal} of $F_{t,p,q}$, respectively. Note that $F_{t,p,q}$ has $t$ petals.
The bounded gadgets developed for Theorem~\ref{TIsolated} can also be used to supply the edges of petals in this connected graph. We obtain the following asymptotic formula.
	
\vspace{0.3em}

\begin{theorem}\label{TFriend}
	For fixed integers $t \geq 2$, $p \geq 2$ and $q \geq 1$, $rsat(n,F_{t,p,q})=(p+q-1)n+O(1)$ for sufficiently large $n$.
\end{theorem}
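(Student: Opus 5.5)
The proof has two halves: a lower bound $(p+q-1)n-O(1)$ and a matching construction achieving $(p+q-1)n+O(1)$.

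\emph{Lower bound.} I would take an $F_{t,p,q}$-rainbow saturated graph $G$ on $n$ vertices and show that all but $O(1)$ vertices have degree at least $2(p+q-1)$, or more precisely that the edge count is at least $(p+q-1)n-O(1)$. One option is to compute $\eta(F_{t,p,q})$ from Section~2 and apply Theorem~\ref{TLower}, hoping that $\eta(F_{t,p,q})=2(p+q-1)$. The heuristic is that every vertex of $F_{t,p,q}$ has degree at least $p+q-1$, since a petal vertex has degree $p-1+q$. If $\eta$ turns out smaller, I would argue directly instead. Take a vertex $v$ of small degree, say $d(v)<p+q-1$, and a nonedge $vu$. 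Adding $vu$ in any color must create a rainbow copy of $F_{t,p,q}$ through $vu$, in which $v$ has degree at least $p+q-1$. This forces $d_G(v)\ge p+q-2$. A vertex of degree exactly $p+q-2$ would then need to be adjacent to all its $F$-neighbours except $u$, for every $u$. I would then show that the set $L$ of vertices of degree at most $p+q-2$ spans few edges and that each $v\in L$ forces high-degree neighbours. A discharging or weight argument in the spirit of Theorem~\ref{TLower} should then give $e(G)\ge (p+q-1)n-O(1)$. Since only the leading coefficient $p+q-1$ is needed, I expect the bound $\sum_v d(v)\ge 2(p+q-1)n-O(1)$ to follow from the observation that vertices of degree below $p+q-1$ cluster around a bounded set.

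\emph{Upper bound.} I would build a bounded core $K$ containing a fixed clique $Q$ of size $q-1$ or so, acting as the center, together with bounded gadgets from the proof of Theorem~\ref{TIsolated} supplying spare petals. Every other vertex $x$ of the remaining $n-O(1)$ vertices gets joined to a fixed set of $p+q-1$ core vertices. That set is $Q$ plus $p$ further vertices forming, with $x$, a potential petal, and this gives degree exactly $p+q-1$ per vertex and hence $(p+q-1)n+O(1)$ edges. The colors are chosen so that edges from outside vertices reuse colors already present in the core. This makes any petal or center built from two outside vertices non-rainbow, which keeps $G$ rainbow-free.

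\emph{Saturation check.} For a nonedge $xy$ between outside vertices, colored $\alpha$, the edge $xy$ together with the core edges should complete a copy in which $x$ and $y$ lie in a common petal with core vertices. The other $t-1$ petals are taken from the gadgets in a color class avoiding $\alpha$. This uses the flexibility of the gadgets: several disjoint rainbow choices with disjoint color sets, so some choice avoids any given $\alpha$. Nonedges inside the core and between the core and outside vertices are handled similarly, by case analysis.

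The main obstacle is simultaneously guaranteeing rainbow-freeness of $G$ and the ``avoid any color $\alpha$'' property. I would first try to duplicate each gadget twice with disjoint palettes and use a forbidden-color argument to rule out rainbow copies before adding the edge.
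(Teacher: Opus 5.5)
There is a genuine gap in your lower bound: it is a factor of two short. Your first option cannot work. For a petal edge $uv$ of $F_{t,p,q}$ one has $|N(u)\cap N(v)|=p+q-2$ and $N(v)\setminus N(u)=\{u\}$, so $\omega(u,v)=\omega(v,u)=2p+2q-3$. One checks that $\eta(F_{t,p,q})=2p+2q-3<2(p+q-1)$, so Theorem~\ref{TLower} yields only $(p+q-\tfrac32)n-O(1)$.

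The fallback does not repair this. Your own degree argument shows that $L$ is empty. If $d_G(v)=p+q-2$, the copy through $vu$ must use every edge of $G$ at $v$, so taking $\alpha$ to be the color of one of them gives a contradiction; hence $\delta(G)\ge p+q-1$. But a minimum-degree bound, with or without $O(1)$ exceptions, only gives $\sum_v d(v)\ge (p+q-1)n-O(1)$, i.e.\ $e(G)\ge \frac{p+q-1}{2}n-O(1)$. It cannot give more, because in the extremal graphs almost every vertex really does have degree $p+q-1$. The real content is that the edges at these low-degree vertices are essentially never shared between two of them: they go to a bounded set of high-degree vertices. That is a genuine structural statement and does not follow from low-degree vertices clustering around a bounded set. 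The paper gets it by a short reduction. Every edge of $F_{t,p,q}$ lies in a copy of $K_{p+q}$, so every $F_{t,p,q}$-rainbow saturated graph is $K_{p+q}$-rainbow-semisaturated, and Theorem~\ref{T41} (Chakraborti--Hendrey--Lund--Tompkins) gives $rssat(n,K_{p+q})=(p+q-1)n-O(1)$.

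Your upper bound is a plan rather than a construction, and both concrete mechanisms you propose would break it. First, your outside vertices form an independent set. So the only copies with two outside vertices $x,y$ in a common petal are those created by adding the nonedge $xy$, which are exactly the copies saturation requires to be rainbow. Reusing core colors on edges at $x,y$ to make such petals non-rainbow is therefore self-defeating, and it protects nothing in $G$. Second, in your scheme the gadgets alone supply the other $t-1$ petals around the center. Duplicating them on disjoint palettes would put $2(t-1)\ge t$ disjoint rainbow petals around a rainbow center already in $G$.

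The missing idea is a gadget whose rainbow petal capacity is exactly what is needed, yet which can realize it while avoiding any prescribed vertex and color; this is the $\mathcal R(H,2)$-saturation of Definition~\ref{D510}. The paper joins $C\cong K_{p+q-1}$ to $S\cup R$ with fresh distinct join colors and takes $S=(t-2)A_p\cup A_{p-1}$. A saturating copy uses $q$ vertices of $C$ as center, $p-2$ with the new edge, and the last one with a $K_{p-1}$ from $A_{p-1}$; the partition of $C$ is chosen according to where $\alpha$ sits. Rainbow-freeness holds because degrees force the center into $C$, and each $A_m$ hosts at most one petal (Claim~1, via the paired colors $a_{r,s},b_{r,s}$).

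For $p=2$ this scheme fails, because no edge of $C$ is left unused to absorb a color $\alpha$ lying on $C$. The paper therefore needs separate base graphs $G_2,G_3,G_4$ for $q\ge3$, $q=2$, $q=1$, with colors of $S$ identified with those of $C$, plus the gadget $B_t$ for $t\ge3$. Your sketch does not anticipate this case distinction.
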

\vspace{0.1em}

Note that if $t\ge 2$, then $F_{t,1,1}=K_{1,t}$ and $rsat(n,F_{t,1,1})=\frac{t-1}{2}n+O(1)$ for odd $t\ge 3$ given in Remark \ref{R24}. If $t=1$, then $F_{1,p,q}=K_{p+q}$ and the upper bound of $rsat(n,F_{1,p,q})$ is given in \cite{BEH}.

	The rest of this paper is organized as follows.
	Section~2 introduces the directed weight parameters and the basic color-avoidance criterion, and then proves the linear lower bound for graphs without isolated edges.
	Section~3 develops bounded constructions for graphs containing an isolated edge and completes the proof of Theorem~\ref{T11}.
	Section~4 applies these constructions to generalized friendship graphs, and Section~5 concludes with several remarks and open problems.

\section{Linear lower bounds}

For  an edge-colored graph $(G,\mathcal C)$, fix $e\notin E(G)$ and denote
\[\mathcal F_e=\{J: J \mbox{~is a rainbow copy  of~} F \mbox{~in~} (G,\mathcal C)+_\beta e \mbox{~with~} e\in E(J) \mbox{ for some }\beta \notin X_{\mathcal C}\}.\]

\vspace{0.3em}
	
\begin{prop}\label{P21}
	An edge-colored graph $(G,\mathcal C)$ is $F$-rainbow-semisaturated if and only if, for every nonedge $e$ of $G$ and every color $\alpha\in\mathbb N$, there exists $J\in\mathcal F_e$ such that no edge of $J-e$ has the color $\alpha$. If $(G,\mathcal C)$ is also $F$-rainbow free, this condition is equivalent to $(G,\mathcal C)$ being $F$-rainbow saturated.
\end{prop}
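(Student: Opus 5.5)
The argument is a recoloring argument: the color of the added edge $e$ matters only through whether it coincides with a color already used on $J-e$. Write $J\setminus e$ for the edge set of $J-e$.

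First we record an observation. Let $J$ be a subgraph of $G+e$ containing $e$ such that $J\setminus e$ is rainbow in $(G,\mathcal C)$ and $J\cong F$. Then $J$ is rainbow in $G+_\gamma e$ exactly when $\gamma\notin\mathcal C(J\setminus e)$. The coloring on $J\setminus e$ does not depend on $\gamma$, so the choice of $\gamma$ is the only thing that changes.

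For the forward direction, assume $(G,\mathcal C)$ is $F$-rainbow-semisaturated, and fix a nonedge $e$ and a color $\alpha$. Apply semisaturation with the color $\alpha$ itself. This gives a rainbow copy $J$ of $F$ in $G+_\alpha e$ with $e\in E(J)$. Since $J$ is rainbow and $e$ has color $\alpha$, no edge of $J-e$ has color $\alpha$. Now pick any $\beta\notin X_{\mathcal C}$; such a $\beta$ exists because $X_{\mathcal C}$ is finite and $\mathbb N$ is infinite. Recolor $e$ by $\beta$. The edges of $J-e$ are pairwise distinct in color and all lie in $X_{\mathcal C}$, while $\beta$ does not. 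Hence $J$ is rainbow in $G+_\beta e$, so $J\in\mathcal F_e$, and $J-e$ avoids $\alpha$ as required.

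For the converse, fix a nonedge $e$ and a color $\alpha$. By hypothesis there is $J\in\mathcal F_e$ with $J-e$ avoiding $\alpha$. Being in $\mathcal F_e$ means $J$ is rainbow in $G+_\beta e$ for some $\beta$, so $J\setminus e$ is rainbow. We already know $\alpha\notin\mathcal C(J\setminus e)$. By the observation, $J$ is a rainbow copy of $F$ in $G+_\alpha e$ containing $e$. Since $e$ and $\alpha$ were arbitrary, $(G,\mathcal C)$ is $F$-rainbow-semisaturated.

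The final sentence of the proposition follows directly from the definition: $F$-rainbow saturated means $F$-rainbow free together with $F$-rainbow-semisaturated.

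No step is a real obstacle. The only points needing care are the existence of a color $\beta\notin X_{\mathcal C}$, and the fact that membership in $\mathcal F_e$ does not depend on which fresh color $\beta$ is used.
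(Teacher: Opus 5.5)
Your proof is correct and takes essentially the same route as the paper. The converse is exactly the paper's sufficiency argument. For the forward direction, you spell out what the paper calls obvious: the recoloring of $e$ by a fresh color $\beta\notin X_{\mathcal C}$, which shows that $J\in\mathcal F_e$.
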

\begin{proof} The necessity is obvious by the definition of $F$-rainbow-semisaturated. For the sufficiency, let $e$ be a  nonedge of $G$ and $\alpha\in\mathbb N$. Since there exists $J\in\mathcal F_e$ such that no edge of $J-e$ has the color $\alpha$, $(G,\mathcal C)+_\alpha e$ contains a rainbow copy of $F$ containing $e$.
The final assertion follows from the definition of rainbow saturation.
\end{proof}

For an ordered edge $uv\in E(F)$, put
\[
	\omega(u,v)=2|N_F(u)\cap N_F(v)|+|N_F(v)\setminus N_F(u)|.
\]
Define
\[
	\omega^*(F)=\min_{uv\in E(F)}\max\{\omega(u,v),\omega(v,u)\},\qquad
	\omega_*(F)=\min_{uv\in E(F)}\min\{\omega(u,v),\omega(v,u)\}.
\]
The first quantity, $\omega^*(F)$, is the Cameron--Puleo weight parameter \cite{Ca}. For a graph $F$ without isolated edges, define
\[
	\eta(F)=\max\{\omega^*(F)-1,\omega_*(F)\}.
\]
Then $\omega^*(F)-1\leq\eta(F)\leq\omega^*(F)$. Also it is clear that $\eta(F) \geq 1$.

\begin{lemma}\label{L41}
Let $F$ be a graph without isolated edge. If $(G,\mathcal C)$ is an $F$-rainbow-semisaturated graph on $n \geq |F|$ vertices, then
\[
	e(G) \geq \frac{\eta(F)}{2}n-\frac{\eta(F)^2-2\eta(F)+2}{2}.
\]
\end{lemma}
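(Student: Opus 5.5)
The plan is a degree-counting argument in the style of K\'aszonyi--Tuza and Cameron--Puleo. Write $\eta=\eta(F)$ and let $S=\{u\in V(G): d_G(u)<\eta\}$ be the set of low-degree vertices. Since
\[
2e(G)=\sum_{u\notin S}d_G(u)+\sum_{u\in S}d_G(u)\ge \eta(n-|S|)+\sum_{u\in S}d_G(u),
\]
the lemma reduces to a structural statement about $S$. I will show that $S$ induces a clique, hence $|S|\le \eta$, because each vertex of $S$ has at most $\eta-1$ neighbours. I will also extract a little extra degree information inside $S$. Together these should give a total deficit of at most $\eta^2-2\eta+2$ in $\sum_u(\eta-d_G(u))$ over $S$.

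\emph{Local inequality.} Let $uv$ be a nonedge of $G$. By Proposition~\ref{P21}, for each colour $\alpha$ there is $J\in\mathcal F_e$, with $e=uv$, whose other edges avoid $\alpha$. Let $xy$ be the edge of $F$ mapped onto $uv$, with $u\mapsto x$ and $v\mapsto y$. Then:
\begin{itemize}
\item the common neighbours of $x$ and $y$ embed into $N_G(u)\cap N_G(v)$;
\item $N_F(y)\setminus N_F(x)$, which contains $x$, embeds into $(N_G(v)\cup\{u\})\setminus N_G(u)$.
\end{itemize}
Since $\omega(x,y)=|N_F(x)\cap N_F(y)|+d_F(y)$, this gives
\[
d_G(v)+|N_G(u)\cap N_G(v)|\ \ge\ \omega(x,y)-1 .
\]
For the inequality at the other endpoint, I count $d_G(u)+d_G(v)\ge d_F(x)+d_F(y)-2$. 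Because $uv$ embeds as an edge whose endpoints have total degree at least $2$ besides each other (no isolated edge), and because $\omega$ counts common neighbours twice, I expect
\[
\max\{d_G(u),d_G(v)\}+\min\{d_G(u),d_G(v)\}\ \ge\ \omega^*(F)-1+\omega_*(F)-\text{(small)} .
\]

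\emph{Using the free colour $\alpha$.} When some common neighbour $w$ of $u$ and $v$ is used, choosing $\alpha$ equal to the colour of $uw$ forces $J$ to avoid that edge. This should upgrade the estimate by one, giving $\max\{d_G(u),d_G(v)\}\ge\omega^*(F)-1$ and $\min\{d_G(u),d_G(v)\}\ge\omega_*(F)$ in the relevant role. In either case, one of $u,v$ has degree at least $\eta$. Hence no two nonadjacent vertices both lie in $S$, so $S$ is a clique.

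\emph{Sharpening and the main obstacle.} Inside $S$, each $u\in S$ already has $|S|-1$ neighbours in $S$, so $d_G(u)\ge |S|-1$. Hence
\[
2e(G)\ \ge\ \eta n-|S|\bigl(\eta-|S|+1\bigr),
\]
which needs a refinement to reach the stated constant. Maximising $s(\eta-s+1)$ over $s\le\eta$ gives roughly $(\eta+1)^2/4$, which is already at most $\eta^2-2\eta+2$ for $\eta\ge 3$. Small $\eta$, together with the use of $n\ge|F|$ (ensuring a nonedge exists and $|S|<n$), must be checked separately, for instance $s=\eta$ forcing $G[S]=K_\eta$.

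The main obstacle is the local inequality with the exact threshold $\eta=\max\{\omega^*-1,\omega_*\}$. I must argue carefully that the $\omega_*$ term is genuinely attained via the colour-avoidance trick. The $\omega^*-1$ term comes from losing only the edge $uv$ itself. I would first verify the extremal case where $x$ and $y$ have no common neighbours, and then handle common neighbours by the colour-choice argument.
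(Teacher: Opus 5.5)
There is a genuine gap, at exactly the step you flagged as the main obstacle. The claim that every nonedge $uv$ has an endpoint of degree at least $\eta(F)$, so that $S$ is a clique, is false whenever the edge $xy$ of $F$ carrying $uv$ has common neighbours.

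Your inequality $d_G(v)+|N_G(u)\cap N_G(v)|\ge\omega(x,y)-1$ is correct. However, $\omega$ counts each common neighbour of $x,y$ twice, while a common neighbour of $u,v$ in $G$ adds only one unit to $d_G(u)$ and one to $d_G(v)$. Nothing lets you move the $|N_G(u)\cap N_G(v)|$ term onto the degree of a single endpoint. The colour trick forces only one extra edge at $u$. That edge compensates for the missing edge $uv$ and gives $d_G(u)\ge d_F(x)$, not $d_G(u)\ge\omega(y,x)=d_F(x)+|N_F(x)\cap N_F(y)|$.

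Here is a concrete counterexample. Take $F=K_3$, so $\eta(F)=\omega_*(F)=3$, and let $G=K_2\vee\overline{K_{n-2}}$ with centres $c_1,c_2$ and all edges coloured distinctly. For a nonedge $r_ir_j$ and any colour $\alpha$, at most one of $r_ic_1,r_jc_1,r_ic_2,r_jc_2$ has colour $\alpha$. Hence one of the triangles $r_ir_jc_1$, $r_ir_jc_2$ is rainbow in $G+_\alpha r_ir_j$, and $G$ is $K_3$-rainbow-semisaturated. Yet its $n-2$ pairwise nonadjacent vertices $r_i$ all have degree $2<\eta$. So $S$ can be an independent set of linear size, and the estimate $2e(G)\ge\eta n-|S|(\eta-|S|+1)$ is unavailable. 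Your argument does go through when $F$ is triangle-free, which is the case you planned to verify first.

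The missing idea is to charge the double weight of common neighbours to the common neighbours themselves. This needs an anchored, weighted count rather than bounds on individual degrees. The paper fixes a vertex $x^*$ of minimum degree, sets $B=N(x^*)$ and $\overline B=V(G)\setminus B$, and uses only the nonedges $x^*y$ with $y\in\overline B\setminus\{x^*\}$. Images of common neighbours then land in $B$, and the right local quantity is the weighted degree $2d_B(y)+d_{\overline B}(y)$:
a copy with a fresh colour, together with $d_G(y)\ge d_G(x^*)$ when $y$ plays the lower-degree role, gives $2d_B(y)+d_{\overline B}(y)\ge\omega^*(F)-1$;
a copy avoiding the colour of some edge at $y$ gives $2d_B(y)+d_{\overline B}(y)\ge\omega_*(F)$.

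The surplus $d_B(y)$ is paid for by $\sum_{x\in B}d_G(x)\ge\sum_{y\in\overline B}d_B(y)$. Summing over $\overline B$ with $d_G(x^*)\le\eta-1$ (the case $d_G(x^*)\ge\eta$ being trivial) yields exactly the constant $\eta^2-2\eta+2$. In the $K_3$ example, each $r_i$ has weighted degree $4\ge 3$ relative to $B=\{c_1,c_2\}$, so this weighting succeeds precisely where plain degrees fail.
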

\noindent {\bf Proof.}
Let $x^*\in V(G)$ with $d(x^*)=\delta(G)$, let $B=N(x^*)$ and $\overline{B}=V(G) \setminus B$. We first derive two local estimates for vertices in $\overline{B}\setminus\{x^*\}$.

Fix $y \in \overline{B} \setminus \{x^*\}$, and choose a color $\beta\notin X_{\mathcal C}$. Since $(G,\mathcal C)$ is $F$-rainbow-semisaturated, $(G,\mathcal C)+_\beta x^*y$ contains a rainbow copy of $F$ using $x^*y$. Let $\varphi$ be the corresponding embedding of $F$, with an edge $uv\in E(F)$ mapped to $x^*y$. Assume that $d_F(u) \leq d_F(v)$. Write $a=|N_F(v)\setminus N_F(u)|$ and $b=|N_F(u) \cap N_F(v)|$. Then $a\ge 1$ and $\omega(u,v)=a+2b \geq \omega^*(F)$.

We first obtain upper bound of  $\omega^*(F)$. If $y=\varphi(u)$, then $d_G(y)\geq d_G(x^*) \geq d_F(v)-1=a+b-1$; if $y=\varphi(v)$, then $d_G(y) \geq d_F(v)-1=a+b-1$ as well. Moreover, $\varphi(N_F(u)\cap N_F(v))\subseteq N_G(x^*)=B$, so $y$ has at least $b$ neighbors in $B$. Hence
\[
	2d_B(y)+d_{\overline{B}}(y)=d_G(y)+d_B(y)\geq 2b+a-1=\omega(u,v)-1\geq \omega^*(F)-1.
\]

Since $F$ has no isolated edge, $y$ is not isolated in $G$.
Choose a color $i$ appearing on an edge incident with $y$. By Proposition~\ref{P21}, $(G,\mathcal C)+_i x^*y$ contains a rainbow copy of $F$ using $x^*y$ such that all its edges in $G$ avoid $i$.
Let $\psi$ be the corresponding embedding, with $x^*=\psi(p)$ and $y=\psi(q)$ for an edge $pq\in E(F)$. Put $a'=|N_F(q)\setminus N_F(p)|$ and $b'=|N_F(p)\cap N_F(q)|$. The copy gives $b'$ edges from $y$ to $B$ and $a'-1$ other edges incident with $y$, all avoiding color $i$.
The chosen edge of color $i$ incident with $y$ is not used in this copy, so it contributes at least one additional unit to $2d_B(y)+d_{\overline{B}}(y)$. Thus
\[
	2d_B(y)+d_{\overline{B}}(y)\geq 2b'+a'=\omega(p,q)\geq \omega_*(F).
\]
Combining the two  bounds, for every $y \in \overline{B} \setminus \{x^*\}$, we have
\[
	2d_B(y)+d_{\overline{B}}(y)\geq \eta(F).
\]

We now obtain the desired lower bound. If $d_G(x^*)\geq \eta(F)$, then $e(G)\geq \frac{\eta(F)}{2}n$. If $d_G(x^*)\leq \eta(F)-1$, then using $\sum_{x \in B} d_G(x) \geq \sum_{y \in \overline{B}} d_B(y)$, we have
\begin{align*}
	e(G) &= \frac{1}{2}\left(\sum_{x \in B} d(x) + \sum_{y \in \overline{B}} d(y)\right)
	\geq \frac{1}{2}\sum_{y \in \overline{B}} \bigl(2d_B(y)+d_{\overline{B}}(y)\bigr) \\
	&\geq \frac{1}{2}\left(2d_G(x^*)+\eta(F)(|\overline{B}|-1)\right)
	= \frac{\eta(F)}{2}n-\frac{(\eta(F)-2)d_G(x^*)+\eta(F)}{2} \\
	&\geq \frac{\eta(F)}{2}n-\frac{\eta(F)^2-2\eta(F)+2}{2}.
\end{align*} \qed
\vspace{0.3em}

Now we are going to prove Theorem~\ref{TLower}.
\vspace{0.3em}

\medskip\noindent {\bf Proof of Theorem~\ref{TLower}.}
Lemma~\ref{L41} gives the stated lower bound for $rssat(n,F)$. Since $rsat(n,F) \geq rssat(n,F)$, the final assertion follows. \qed
\vspace{0.5em}

The following corollary is obvious by Theorem~\ref{TLower}.
\begin{cor}\label{C23}
Let $F$ be a graph without isolated edge. If $\omega_*(F)=\omega^*(F)$, then
\[
	rsat(n,F)\geq \frac{\omega^*(F)}{2}n-O(1).
\]
In particular, if $d_F(u)=d_F(v)$ for every edge $uv \in E(F)$, then $\eta(F)=\omega^*(F)$. Hence, if $F$ is $d$-regular and $m=\min_{uv\in E(F)}|N_F(u)\cap N_F(v)|$,
then
\[
	rsat(n,F)\geq \frac{d+m}{2}n-O(1).
\]
\end{cor}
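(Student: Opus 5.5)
The corollary follows from Theorem~\ref{TLower} once we evaluate $\eta(F)$ in each case. Assume first that $\omega_*(F)=\omega^*(F)$. By definition $\eta(F)=\max\{\omega^*(F)-1,\omega_*(F)\}$, so $\eta(F)=\omega^*(F)$. Theorem~\ref{TLower} then gives
\[
rsat(n,F)\ge \frac{\omega^*(F)}{2}n-\frac{\omega^*(F)^2-2\omega^*(F)+2}{2}.
\]
The subtracted term depends only on $F$, so it is $O(1)$. This proves the first claim.

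Next suppose $d_F(u)=d_F(v)$ for every edge $uv$. Fix an edge $uv$ and write $c=|N_F(u)\cap N_F(v)|$. Then
\[
|N_F(v)\setminus N_F(u)|=d_F(v)-c=d_F(u)-c=|N_F(u)\setminus N_F(v)|,
\]
so $\omega(u,v)=\omega(v,u)$ for that edge. Hence for each edge the inner $\max$ and inner $\min$ coincide. Taking the minimum over all edges gives $\omega^*(F)=\omega_*(F)$, and therefore $\eta(F)=\omega^*(F)$.

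Finally let $F$ be $d$-regular, with $d\ge 1$ because $F$ has no isolated vertices. For an edge $uv$ we have
\[
\omega(u,v)=2c+(d-c)=d+c,
\]
where again $c=|N_F(u)\cap N_F(v)|$. Minimizing over edges gives $\omega^*(F)=d+m$.

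One hypothesis must still be checked: Theorem~\ref{TLower} requires $F$ to have no isolated edge. An isolated edge would join two vertices of degree $1$. So in the regular case this only fails when $d=1$, that is, when $F$ is a matching, and that case is excluded by the standing hypothesis of the corollary. With the hypothesis in place, the first part applies and yields $rsat(n,F)\ge \frac{d+m}{2}n-O(1)$.

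There is no real obstacle here. The only care needed is to check that the symmetry of $\omega$ on each edge survives the min over edges, which it does, and that the no-isolated-edge hypothesis of Theorem~\ref{TLower} is carried along.
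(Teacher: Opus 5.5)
Your proof is correct and follows the paper's route: the paper calls the corollary obvious from Theorem~\ref{TLower}. Your three computations supply exactly the omitted details, namely $\eta(F)=\omega^*(F)$ when $\omega_*(F)=\omega^*(F)$, $\omega(u,v)=\omega(v,u)$ when $d_F(u)=d_F(v)$ (via $|N_F(v)\setminus N_F(u)|=d_F(v)-|N_F(u)\cap N_F(v)|$), and $\omega(u,v)=d+|N_F(u)\cap N_F(v)|$ in the $d$-regular case, and you correctly carry along the no-isolated-edge hypothesis.
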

\vspace{0.3em}

\begin{remark}\label{R24}
The coefficient in Theorem~\ref{TLower} is sharp for specific graphs. Let $F=K_{1,r}$ with odd $r\geq 3$. Then $\eta(F)=r-1$. For every $n \geq r+1$, an $(r-1)$-regular graph $G$ on $n$ vertices exists, and we give $G$ a rainbow coloring. Since $\Delta(G)=r-1$,  $G$ contains no rainbow $K_{1,r}$.
For every nonedge $xy$ and every color $c$, at least one endpoint is incident with no edge of color $c$ in $G$, say $x$.
Then $(G+_c xy)[N_G(x)\cup \{x,y\}]$ contains a rainbow copy of $K_{1,r}$.
Therefore
\[
	rsat(n,K_{1,r})\leq \frac{r-1}{2}n,
\]
while Theorem~\ref{TLower} gives $rsat(n,K_{1,r})\geq \frac{r-1}{2}n-O(1)$.
\end{remark}
\vspace{0.0em}

\begin{remark}\label{R25}
A threshold graph is a graph obtained from a single vertex by repeatedly adding either an isolated vertex or a vertex adjacent to all existing vertices (for example, stars and cliques).
Cameron and Puleo~\cite{Ca} proved that every threshold graph is sharp for the saturation lower bound
\[
	sat(n,F)\geq \frac{\omega^*(F)-1}{2}n-O(1).
\]
It is natural to ask whether a similar property holds for rainbow saturation. But the answer is negative.
For $K_s$ we have $\eta(K_s)=\omega_*(K_s)=2s-3$, and  Theorem~\ref{TLower} gives $rsat(n,K_3)\geq \frac{3}{2}n-O(1)$, whereas the exact value $rsat(n,K_3)=2n-4$ for sufficiently large $n$ is known in~\cite{CHA}. Thus threshold graphs give useful examples, but they do not form a sharp family for Theorem~\ref{TLower}.
\end{remark}
\vspace{0.3em}

\section{Constructions for graphs with an isolated edge}

We now turn to the upper-bound half of the dichotomy.

\begin{prop}\label{PNonClique}
Let $H$ be a graph that is not complete and has no isolated vertices. Then for every $n\geq |H|+3$,
\[
	rsat(n,H \cup K_2)\leq \binom{|H|+1}{2}.
\]
\end{prop}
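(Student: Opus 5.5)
The plan is a direct construction. Write $h=|H|$ and let $G$ consist of a clique $K$ on $h+1$ vertices together with $n-h-1\ge 2$ isolated vertices. Color the edges of $K$ \emph{rainbow}, with pairwise distinct colors. Then $e(G)=\binom{h+1}{2}$, which is the claimed bound. It remains to check the two parts of rainbow saturation, using the criterion of Proposition~\ref{P21} for the semisaturation part.

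\emph{Rainbow freeness.} This is immediate from a vertex count. The graph $H\cup K_2$ has $h+2$ vertices and no isolated vertices, because $H$ has none. So any copy of it in $G$ must lie inside non-trivial components of $G$. The only such component is $K$, which has just $h+1$ vertices, so $G$ contains no copy of $H\cup K_2$ at all, rainbow or not.

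\emph{Semisaturation.} Fix a nonedge $e$ and a color $\alpha$. There are two types of nonedge, and in both the added edge $e$ will play the role of the isolated $K_2$.
\begin{enumerate}
\item Suppose $e=xy$ joins two isolated vertices. Then we need a rainbow copy of $H$ inside $K$ none of whose edges has color $\alpha$. Since the coloring of $K$ is rainbow, at most one edge $f=ab$ of $K$ has color $\alpha$. Because $H$ is not complete, it has two nonadjacent vertices $w_1,w_2$. Embed $H$ into $K-\{c\}$ for any vertex $c\notin\{a,b\}$ so that $w_1\mapsto a$ and $w_2\mapsto b$; this is possible as $h+1\ge 3$. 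The resulting copy avoids $f$, is rainbow, and is disjoint from $xy$, so together with $xy$ it gives a rainbow $H\cup K_2$.
\item Suppose $e=vx$ with $v\in V(K)$ and $x$ isolated. Embed $H$ into the $h$-clique $K-v$, again sending a nonadjacent pair of $H$ onto the endpoints of the unique edge of color $\alpha$ if that edge lies in $K-v$. This copy of $H$ is rainbow, avoids color $\alpha$, and is vertex-disjoint from $vx$, so it yields the required rainbow $H\cup K_2$ containing $e$.
\end{enumerate}

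I do not expect a real obstacle here. The only point that needs care is where the hypotheses enter. The non-completeness of $H$ is exactly what lets a spanning copy of $H$ in an $h$-clique avoid one prescribed edge; this is the key step. The absence of isolated vertices in $H$ is what makes the vertex-count argument for rainbow freeness valid. The condition $n\ge h+3$ simply guarantees that the isolated part exists, so that the construction has exactly $n$ vertices.
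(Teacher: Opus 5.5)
Your proposal is correct and follows the paper's proof. The paper uses the same rainbow-colored $K_{h+1}\cup(n-h-1)K_1$, the same vertex-count argument for rainbow-freeness, and the same use of the non-completeness of $H$ to find a rainbow copy of $H$ in $K$ minus one vertex that avoids the prescribed color. You make explicit the embedding step the paper leaves implicit: sending a nonadjacent pair of $H$ onto the endpoints of the edge colored $\alpha$.
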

\begin{proof}
Let $h=|H|$ and $G = K_{h+1} \cup (n-h-1)K_1$. Then $e(G)=\binom{h+1}{2}$. Given a rainbow coloring of $G$, we shall show that $G$ is $(H \cup K_2)$-rainbow saturated.
Since $H$ has no isolated vertices, it is clear that $G$ is $(H \cup K_2)$-rainbow free.
	
Partition $V(G)=M \cup I$ where $M$ is the set of $h+1$ vertices in $K_{h+1}$ and $I$ is the set of all isolated vertices.
Fix a nonedge $e=xy$ and an arbitrary color $c$, where $x \in I $. Since $H$ is not complete, for any vertex $v \in M$, there must exist a rainbow copy of $H$ in $G[M \setminus \{v\}]$ avoiding color $c$. Thus $G+_c xy$ contains a rainbow copy of $H \cup K_2$ whether $y \in M$ or not.
Therefore $G$ is $(H \cup K_2)$-rainbow saturated, and hence
\[
rsat(n,H \cup K_2)\leq \binom{h+1}{2}.
\]
\end{proof}

Proposition~\ref{PNonClique} gives the desired upper bound for all $H \cup K_2$ where $H$ has no isolated vertices and is not complete. But the construction fails when $H$ is complete, so we need to develop another construction for $K_s \cup K_2$.

\begin{definition}\label{D510}
	Let $H$ be a graph. An edge-colored graph $(G,\mathcal C)$ is \textit{$\mathcal{R}(H,2)$-saturated} if it satisfies
	{\renewcommand\theenumi{\alph{enumi}}\renewcommand\labelenumi{(\theenumi)}
	\begin{enumerate}
		\item $G$ contains no rainbow copy of $H \cup K_2$;
		\item for every vertex $v \in V(G)$ and every color $c$, $G$ contains a rainbow copy of $H$ avoiding both $v$ and  $c$;
		\item for every nonedge $\tilde e \notin E(G)$ and every color $c$,  $G+_c\tilde e$ has a rainbow copy of $H\cup K_2$ containing $\tilde e$.
	\end{enumerate}}
	If only (b) and (c) hold, $(G,\mathcal C)$ is \textit{$\mathcal{R}(H,2)$-semisaturated}. When the coloring is fixed and clear from context, we suppress $\mathcal C$ and refer simply to $G$.
\end{definition}

\begin{definition}\label{D511}
	A graph $G$ is \textit{endpoint-less} if it has no isolated vertices and the set $M=\{v\in V(G):d_G(v)=1\}$ induces a matching covering $M$.
\end{definition}

\begin{observation}\label{O512}
	If $G_1$ and $G_2$ are endpoint-less, so is $G_1 \cup G_2$.
\end{observation}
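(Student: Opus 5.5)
The plan is to check the two defining conditions of Definition~\ref{D511} directly for $G=G_1\cup G_2$, using only that the union is vertex-disjoint, so every vertex keeps exactly the neighbourhood it had in its own component graph.

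First, $G_1$ and $G_2$ have no isolated vertices. For $v\in V(G_i)$ we have $N_G(v)=N_{G_i}(v)$, hence $d_G(v)=d_{G_i}(v)\geq 1$. So $G$ has no isolated vertices.

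Next, let $M=\{v\in V(G):d_G(v)=1\}$ and $M_i=\{v\in V(G_i):d_{G_i}(v)=1\}$. By the degree identity, $M=M_1\cup M_2$, a disjoint union. Since no edge of $G$ joins $V(G_1)$ to $V(G_2)$, we get
\[
G[M]=G_1[M_1]\cup G_2[M_2].
\]
Each $G_i[M_i]$ is a matching covering $M_i$, that is, a $1$-regular graph on $M_i$. A vertex-disjoint union of two $1$-regular graphs is $1$-regular, so $G[M]$ is a matching covering $M$. Hence $G$ is endpoint-less.

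No step is a real obstacle. The only point needing care is that "vertex-disjoint union" is the paper's standing convention, which guarantees that no new edges or adjacencies are created between $G_1$ and $G_2$.
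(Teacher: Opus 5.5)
Your proof is correct. The paper states this as an observation without proof, and your check is exactly the routine verification it leaves implicit. Degrees are preserved under the vertex-disjoint union, so $M=M_1\cup M_2$, and $G[M]=G_1[M_1]\cup G_2[M_2]$ is $1$-regular on $M$, which is the condition in Definition~\ref{D511}.
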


\begin{prop}\label{P513}
	Let $H$ be an endpoint-less graph. Then $(G,\mathcal C)$ is $\mathcal{R}(H,2)$-saturated if and only if $G\cup kK_1$ is $(H \cup K_2)$-rainbow saturated for every $k \geq 1$.
\end{prop}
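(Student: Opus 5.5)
The plan is to prove both directions by comparing nonedges of $G\cup kK_1$ with conditions (a)--(c) of Definition~\ref{D510}. The one nontrivial ingredient is a normalization lemma, and it is the only place where the endpoint-less hypothesis is used.

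\emph{Normalization.} Let $z$ be an isolated vertex of $G\cup kK_1$, let $y\neq z$, and suppose $(G\cup kK_1)+_c zy$ contains a rainbow copy $J$ of $H\cup K_2$ with $zy\in E(J)$. I claim that $J$ can then be taken to consist of the edge $zy$ as its $K_2$-component together with a rainbow copy of $H$ in $G$ that avoids $y$ and $c$.

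\begin{itemize}
\item Suppose $zy$ is the $K_2$-component of $J$. Then the $H$-part avoids $y$ and $z$. Its edges all lie in $G$, since the only other vertices outside $G$ are isolated and $H$ has no isolated vertices. Since $J$ is rainbow and $zy$ has colour $c$, the $H$-part avoids $c$.
\item Otherwise $zy$ lies in the $H$-part. Since $d(z)=1$ in the augmented graph, $z$ is a degree-$1$ vertex of that copy of $H$. Because $H$ is endpoint-less, the degree-$1$ vertices of $H$ induce a matching covering them. Hence the $H$-neighbour $y$ of $z$ also has degree $1$ in $H$, so $zy$ is an isolated-edge component of the copy of $H$.
\item In that case, let $e'$ be the $K_2$-component of $J$. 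Swap roles: remove $zy$ from the $H$-part and put $e'$ in its place. This gives a copy of $H$ disjoint from $\{z,y\}$, and $zy$ becomes the $K_2$-component. The new copy of $H$ lies in $G$, avoids $y$, and avoids colour $c$, by rainbowness again.
\end{itemize}

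This vertex-and-component swap is the main obstacle. It is exactly where a non-endpoint-less $H$ would fail, because a pendant edge of $H$ could absorb $zy$.

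\emph{($\Rightarrow$)} Assume (a)--(c) hold and fix $k\ge1$.
\begin{itemize}
\item \emph{Rainbow-freeness.} A rainbow copy of $H\cup K_2$ in $G\cup kK_1$ uses no isolated vertex, so it would lie in $G$, contradicting (a).
\item \emph{Nonedges $e$ inside $V(G)$.} Condition (c) supplies the required copy directly.
\item \emph{Nonedges $zy$ with $z$ isolated and $y\in V(G)$, colour $c$.} Condition (b) with $v=y$ gives a rainbow $H$ in $G$ avoiding $y$ and $c$. Adding the edge $zy$, coloured $c$, as the $K_2$-component gives the required rainbow copy.
\item \emph{Nonedges $zz'$ between two isolated vertices.} Apply (b) with any $v$ and colour $c$, and use $zz'$ as the $K_2$-component.
\end{itemize}

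\emph{($\Leftarrow$)} It suffices to take $k=1$, with isolated vertex $z$.
\begin{itemize}
\item \emph{(a)} holds because $G$ is a subgraph of a rainbow-free graph.
\item \emph{(c)} holds for nonedges inside $G$. The copy guaranteed in $(G\cup K_1)+_c\tilde e$ avoids $z$, since $z$ remains isolated, so it lies in $G+_c\tilde e$.
\item \emph{(b).} Given $v$ and $c$, saturation applied to the nonedge $zv$ with colour $c$ gives a rainbow copy containing $zv$. The normalization lemma then turns it into a rainbow $H$ in $G$ avoiding $v$ and $c$.
\end{itemize}
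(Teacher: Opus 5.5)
Your proof is correct and takes essentially the same approach as the paper: you split the nonedges into three cases (both endpoints in $G$, one isolated endpoint, two isolated endpoints), and you use the endpoint-less hypothesis exactly once. That use shows an added pendant edge $zy$ must be an isolated-edge component, which can be swapped with the extra $K_2$ to recover condition (b). Your write-up is slightly more explicit than the paper's on two points: why the $H$-neighbour of $z$ also has degree one, and why the swapped copy of $H$ lies in $G$ and avoids $c$.
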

\begin{proof}
Let $X$ be the set of the $k$ isolated vertices in $G\cup kK_1$. Since $H$ has no isolated vertices, every copy of $H$ in $G\cup kK_1$ is contained in $G$. Hence the property~(a) is equivalent to $G\cup kK_1$ containing no rainbow copy of $H\cup K_2$.

Fix a nonedge $e \notin E(G\cup kK_1)$ and a color $c$. If both endpoints of $e$ lie in $G$, the required condition for $(G\cup kK_1)+_c e$ is exactly the property~(c).

Suppose next that $e=vx$, where $v\in V(G)$ and $x\in X$. By the property~(b), $G$ contains a rainbow copy of $H$ avoiding both $v$ and $c$; together with $vx$, it forms a rainbow copy of $H\cup K_2$ in $(G\cup kK_1)+_c vx$.
Conversely, suppose that $(G\cup kK_1)+_c vx$ contains a rainbow copy of $H\cup K_2$ containing $vx$.
Since $x$ has degree one in $(G\cup kK_1)+_c vx$ and $H$ is endpoint-less, $vx$ corresponds to a $K_2$-component of the target graph.
If it corresponds to a $K_2$-component of $H$, interchange this component with the extra $K_2$-component.
In either case, the remaining copy of $H$ lies in $G$ and avoids both $v$ and $c$, which is the property~(b).

Finally, if $e=xy$ with $x,y\in X$, apply the property~(b) with an arbitrary vertex of $G$ to obtain a rainbow copy of $H$ avoiding $c$; together with $xy$, it forms a rainbow copy of $H\cup K_2$ in $(G\cup kK_1)+_cxy$. Thus the properties~(a)--(c) are together equivalent to $G\cup kK_1$ being $(H\cup K_2)$-rainbow saturated.
\end{proof}
\vspace{0.3em}

\begin{remark}\label{R514} By  Proposition \ref{P513},
	for an endpoint-less graph $H$, the problem of determining $rsat(n,H \cup K_2)$ for $n$ sufficiently large is equivalent to finding an $\mathcal{R}(H,2)$-saturated graph with the fewest edges.
\end{remark}

\begin{lemma}\label{L515}
	Let $H_1,H_2$ be endpoint-less graphs and let $G_1,G_2$ be $\mathcal{R}(H_1,2)$- and $\mathcal{R}(H_2,2)$-semisaturated, respectively. If $G_1$ and $G_2$ use disjoint color sets, then $G= G_1 \cup G_2$ is $\mathcal{R}(H_1 \cup H_2,2)$-semisaturated.
	
	Moreover, if both $G_1$ and $G_2$ are saturated and $G_2$ contains no rainbow copy isomorphic to a  component of $H_1$, then $G_1 \cup G_2$ is $\mathcal{R}(H_1 \cup H_2,2)$-saturated.
\end{lemma}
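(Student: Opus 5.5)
We verify properties (b) and (c) of Definition~\ref{D510} for $G=G_1\cup G_2$ with target $H=H_1\cup H_2$ directly, using that the colour sets of $G_1$ and $G_2$ are disjoint, so any union of a rainbow subgraph of $G_1$ and a rainbow subgraph of $G_2$ is rainbow.

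\textbf{Property (b).} Fix $v\in V(G)$ and a colour $c$; by symmetry let $v\in V(G_1)$. Property (b) for $G_1$ gives a rainbow copy of $H_1$ in $G_1$ avoiding $v$ and $c$. Choose any vertex $w\in V(G_2)$; property (b) for $G_2$ gives a rainbow copy of $H_2$ in $G_2$ avoiding $w$ and $c$, which also avoids $v$ because $v\notin V(G_2)$. These two copies are vertex-disjoint and use disjoint colour sets, so together they form a rainbow $H_1\cup H_2$ avoiding $v$ and $c$.

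\textbf{Property (c).} Fix a nonedge $\tilde e$ and a colour $c$. There are two cases.
\begin{itemize}
\item If $\tilde e$ lies inside $G_1$, property (c) for $G_1$ gives a rainbow $H_1\cup K_2$ in $G_1+_c\tilde e$ containing $\tilde e$. Its colours are $c$ together with colours of $G_1$. Apply property (b) to $G_2$ with colour $c$ and any vertex to obtain a rainbow $H_2$ in $G_2$ avoiding $c$. Its colours lie in $G_2$'s palette and differ from $c$, so the union is a rainbow $H_1\cup H_2\cup K_2$ containing $\tilde e$. The case $\tilde e$ inside $G_2$ is symmetric.
\item If $\tilde e=xy$ with $x\in V(G_1)$ and $y\in V(G_2)$, use property (b) in $G_1$ with vertex $x$ and colour $c$, and in $G_2$ with vertex $y$ and colour $c$. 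Together with $\tilde e$ coloured $c$, this gives the required rainbow copy.
\end{itemize}
This proves semisaturation. Endpoint-lessness is not needed here; it only matters through Observation~\ref{O512}, which makes $H_1\cup H_2$ endpoint-less so that the notion fits Proposition~\ref{P513}.

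\textbf{Saturation: property (a).} We must show that $G$ contains no rainbow $H_1\cup H_2\cup K_2$, and this is the main obstacle. Suppose such a copy exists. Each component of the copy is connected, so it lies entirely in $G_1$ or entirely in $G_2$. The hypothesis says no component of $H_1$ has a rainbow copy in $G_2$, so every component of the copy of $H_1$ lies in $G_1$; thus the copy of $H_1$ is inside $G_1$. The components of $H_2$ and the extra $K_2$ are distributed between $G_1$ and $G_2$. If the extra $K_2$ lies in $G_1$, then $G_1$ contains a rainbow $H_1\cup K_2$, contradicting property (a) for $G_1$. Otherwise the extra $K_2$ lies in $G_2$, and we need $G_2$ to contain all of $H_2\cup K_2$ to contradict (a) for $G_2$. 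This can fail when some components of $H_2$ are embedded in $G_1$.

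To handle this, argue by a component-swapping, counting argument. Let $\mathcal K$ be the multiset of components of $H_1\cup H_2\cup K_2$. Those embedded in $G_1$ form a rainbow graph $A\subseteq G_1$ containing $H_1$, and those in $G_2$ form a rainbow graph $B\subseteq G_2$. Then $A$ has at least $|H_1|$ components, and property (a) for $G_1$ forbids $A\supseteq H_1\cup K_2$. Since $A\supseteq H_1$ already, $A$ can contain no additional $K_2$ beyond $H_1$, and no additional component of any kind, because every graph without isolated vertices contains an edge (a $K_2$). Hence $A\cong H_1$ exactly, and so $B\cong H_2\cup K_2$ as a multiset of components, contradicting (a) for $G_2$. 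The delicate point is the matching of multisets when $H_1$ and $H_2$ share isomorphic components: components of $H_2$ isomorphic to components of $H_1$ may be placed in $G_1$ while some $H_1$-components sit in $G_2$. The hypothesis excludes the latter, so a counting argument by isomorphism class finishes the proof.
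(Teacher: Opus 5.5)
Your proof is correct and follows the paper's argument essentially verbatim. Properties (b) and (c) come from combining copies taken from the two colour-disjoint parts, and property (a) follows by using the hypothesis to force every $H_1$-component into $G_1$; any further component placed in $G_1$ then contains an edge disjoint from the $H_1$-copy, so everything else lies in $G_2$. Your closing worry about matching multisets is unnecessary: in a fixed embedding the $H_1$-components are labelled, so once all of them lie in $G_1$ and nothing else can, the images of $H_2\cup K_2$ form a rainbow copy in $G_2$ directly, with no counting by isomorphism class.
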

\noindent {\bf Proof.}
We first show that $G$ satisfies the properties~(b) and~(c) of Definition~\ref{D510}.
For (b), given a vertex $v\in V(G)$, say $v\in V(G_1)$, and a color $c$. Since $G_1,G_2$ are $\mathcal{R}(H_1,2)$- and $\mathcal{R}(H_2,2)$-semisaturated respectively, the property~(b) gives a rainbow copy $J_1$ of $H_1$ in $G_1$ avoiding $v$ and $c$, and a rainbow copy $J_2$ of $H_2$ in $G_2$ avoiding $c$. Then $J_1\cup J_2$ is the required copy. For (c), let $v_1v_2$ be a nonedge in $G$ and $c$ be a  color. If $v_1\in V(G_1)$ and $v_2\in V(G_2)$, by $G_1,G_2$ being $\mathcal{R}(H_1,2)$- and $\mathcal{R}(H_2,2)$-semisaturated,  the property~(b) gives a rainbow copy of $H_i$ in $G_i$ avoiding $v_i,c$ for $i=1,2$; together with $v_1v_2$ they form a rainbow copy of $H_1\cup H_2\cup K_2$. Assume $v_1,v_2\in V(G_1)$ or $v_1,v_2\in V(G_2)$, say $v_1,v_2\in V(G_1)$. Since $G_1,G_2$ are $\mathcal{R}(H_1,2)$- and $\mathcal{R}(H_2,2)$-semisaturated respectively, by using the property (c) on $G_1$ and the property (b) on $G_2$, there is a rainbow copy of $H_1\cup K_2$ in $G_1+_cv_1v_2$ and a rainbow copy of $H_2$ in $G_2$ avoiding $c$. Then $G+_cv_1v_2$  contains a rainbow copy of $H_1\cup H_2\cup K_2$ containing $v_1v_2$.

For the final assertion, we just need to show that $G$ contains no rainbow copy of $H_1\cup H_2 \cup K_2$.
Note that $G_2$ contains no rainbow copy isomorphic to a component of $H_1$. Assume for a contradiction that $G=G_1\cup G_2$ contains a rainbow copy $A$ of $H_1\cup H_2\cup K_2$. Since every component of $H_1$ in $A$ must lie in $G_1$, there is a rainbow copy $B$ of $H_1$ in $G_1$. Suppose there is  a component of $H_2\cup K_2$ in $A$ that is in $G_1$. Since $H_2$ has no isolated vertices, that component would contain an edge $u_1u_2$ disjoint from $B$. Then  $B\cup G[\{u_1,u_2\}]$ would  be a rainbow copy of $H_1\cup K_2$ in $G_1$, contradicting the saturation of $G_1$. Consequently, the copy of $H_2\cup K_2$ in $A$ must lie in $G_2$, contradicting the saturation of $G_2$. \qed
\vspace{0.3em}


\begin{definition}\label{D516}
	For $m \geq 3$, we construct an  edge-colored graph $A_m$ of order $2m$ and size $4\binom{m}{2}$. Let $V(A_m)=\{v_{j,k}:j \in \{1,2\}, 1 \leq k \leq m\}$ and $ v_{j_1,k_1}v_{j_2,k_2}\in E(A_m)$ if and only if $k_1\not=k_2$ for any $j_1,j_2 \in \{1,2\}$ and $1 \leq k_1,k_2 \leq m$.
We give a map $\mathcal{C}:~E(A_m)\rightarrow \{a_{j,k}, b_{j,k}:1 \leq j < k \leq m\}$ as follows: for any $v_{j_1,k_1}v_{j_2,k_2}\in E(A_m)$,
$$\mathcal{C}(v_{j_1,k_1}v_{j_2,k_2})=\left\{
\begin{array}{ll}
a_{\min\{k_1,k_2\},\max\{k_1,k_2\}} & \mbox{if $j_1=j_2$},\\
b_{\min\{k_1,k_2\},\max\{k_1,k_2\}} & \mbox{if $j_1\not=j_2$}.
\end{array}
\right.
$$
	We also define $A_2$ to be an edge-colored copy of $K_4$ using two colors, where $\mathcal{C}(v_1v_2)=\mathcal{C}(v_1v_3)=\mathcal{C}(v_2v_4)=\mathcal{C}(v_3v_4)$ and $\mathcal{C}(v_2v_3)=\mathcal{C}(v_1v_4)$ (see Figure 1).
\end{definition}

Note that when $m\ge 3$, $X_{\mathcal{C}}=\{a_{j,k}, b_{j,k}:1 \leq j < k \leq m\}$ and then $|X_{\mathcal{C}}|=2\binom{m}{2}$ and each color is  used exactly twice in the map $\mathcal{C}$.  Denote $V_i=\{v_{1,i},v_{2,i}\}$ for $1\le i\le m$. Then $V(A_m)=V_1\cup\cdots\cup V_m$ and each $V_i$ is an independent set. For any $K_m$ in $A_m$, $|V(K_m)\cap V_i|=1$ for $1\le i\le m$. By the definition of the map $\mathcal{C}$, $A_m[\{v_{1,k}:1 \leq  k \leq m\}]$ and $A_m[\{v_{2,k}:1 \leq  k \leq m\}]$ are rainbow copies of $K_m$ with the same color set $\{a_{j,k}:1 \leq j < k \leq m\}$. Also $A_m$ is $K_{m+1}$-free when $m\ge 3$.
Then we have the following result.

\begin{figure}[!ht]
\centering
\tikzset{
	Avertex/.style={circle,draw,fill=white,inner sep=1pt,minimum size=5mm,font=\scriptsize},
	Aedge/.style={line width=0.55pt,thick},
	Bedge/.style={line width=0.55pt,dashed,thick},
	Alabel/.style={font=\tiny,fill=white,inner sep=0.4pt},
	Acol12/.style={draw=blue!75!black},
	Acol13/.style={draw=cyan!55!black},
	Acol14/.style={draw=green!55!black},
	Acol23/.style={draw=red!55!yellow!85!black},
	Acol24/.style={draw=magenta!70!black},
	Acol34/.style={draw=black},
	Bcol12/.style={draw=red!75!black},
	Bcol13/.style={draw=violet!80!black},
	Bcol14/.style={draw=teal!70!black},
	Bcol23/.style={draw=yellow!55!black},
	Bcol24/.style={draw=cyan!90!teal!100!black},
	Bcol34/.style={draw=magenta!55!black}
}
\begin{subfigure}[t]{0.4\textwidth}
\centering
\begin{tikzpicture}[scale=1.2]
	\node[Avertex] (v1) at (0,1.4) {$v_1$};
	\node[Avertex] (v2) at (2,1.4) {$v_2$};
	\node[Avertex] (v3) at (0,0) {$v_3$};
	\node[Avertex] (v4) at (2,0) {$v_4$};
	\draw[Aedge,Acol12] (v1)--(v2); 
	\draw[Aedge,Acol12] (v1)--(v3);
	\draw[Aedge,Acol12] (v2)--(v4);
	\draw[Aedge,Acol12] (v3)--(v4);
	\draw[Bedge,Bcol12] (v1) to[bend left=12] (v4); 
	\draw[Bedge,Bcol12] (v2) to[bend right=12] (v3);
\end{tikzpicture}
\caption{$A_2$}
\end{subfigure}
\hfill
\begin{subfigure}[t]{0.58\textwidth}
\centering
\begin{tikzpicture}[scale=1.2]
	\foreach \k/\x in {1/0,2/1.9,3/3.8}{
		\node[Avertex] (t\k) at (\x,1.35) {$v_{1,\k}$};
		\node[Avertex] (b\k) at (\x,0) {$v_{2,\k}$};
	}
	\draw[Aedge,Acol12] (t1)--node[Alabel,above] {$a_{12}$} (t2);
	\draw[Aedge,Acol12] (b1)--(b2);
	\draw[Aedge,Acol13] (t1) to[bend left=22] node[Alabel,above] {$a_{13}$} (t3);
	\draw[Aedge,Acol13] (b1) to[bend right=22] (b3);
	\draw[Aedge,Acol23] (t2)--node[Alabel,above] {$a_{23}$} (t3);
	\draw[Aedge,Acol23] (b2)--(b3);
	\draw[Bedge,Bcol12] (t1) to[bend right=8] node[Alabel,sloped,above] {$b_{12}$} (b2);
	\draw[Bedge,Bcol12] (b1) to[bend left=8] (t2);
	\draw[Bedge,Bcol13] (t1) to[bend right=12] node[Alabel,sloped,above,pos=0.48] {$b_{13}$} (b3);
	\draw[Bedge,Bcol13] (b1) to[bend left=12] (t3);
	\draw[Bedge,Bcol23] (t2) to[bend right=8] node[Alabel,sloped,above] {$b_{23}$} (b3);
	\draw[Bedge,Bcol23] (b2) to[bend left=8] (t3);
\end{tikzpicture}
\caption{$A_3$}
\end{subfigure}
\par\medskip
\begin{subfigure}[t]{0.98\textwidth}
\centering
\begin{tikzpicture}[scale=1.15]
	\foreach \k/\x in {1/0,2/2.7,3/5.4,4/8.1}{
		\node[Avertex] (t\k) at (\x+1.5,1.85) {$v_{1,\k}$};
		\node[Avertex] (b\k) at (\x+1.5,0) {$v_{2,\k}$};
	}
	\draw[Aedge,Acol12] (t1)--node[Alabel,above] {$a_{12}$} (t2);
	\draw[Aedge,Acol12] (b1)--(b2);
	\draw[Aedge,Acol13] (t1) to[bend left=18] node[Alabel,above] {$a_{13}$} (t3);
	\draw[Aedge,Acol13] (b1) to[bend right=18] (b3);
	\draw[Aedge,Acol14] (t1) to[bend left=30] node[Alabel,above] {$a_{14}$} (t4);
	\draw[Aedge,Acol14] (b1) to[bend right=30] (b4);
	\draw[Aedge,Acol23] (t2)--node[Alabel,above] {$a_{23}$} (t3);
	\draw[Aedge,Acol23] (b2)--(b3);
	\draw[Aedge,Acol24] (t2) to[bend left=18] node[Alabel,above] {$a_{24}$} (t4);
	\draw[Aedge,Acol24] (b2) to[bend right=18] (b4);
	\draw[Aedge,Acol34] (t3)--node[Alabel,above] {$a_{34}$} (t4);
	\draw[Aedge,Acol34] (b3)--(b4);
	\draw[Bedge,Bcol12] (t1) to[bend right=7] node[Alabel,sloped,above] {$b_{12}$} (b2);
	\draw[Bedge,Bcol12] (b1) to[bend left=7] (t2);
	\draw[Bedge,Bcol13] (t1) to[bend right=12] node[Alabel,sloped,above,pos=0.48] {$b_{13}$} (b3);
	\draw[Bedge,Bcol13] (b1) to[bend left=12] (t3);
	\draw[Bedge,Bcol14] (t1) .. controls (-1.25,4.25) and (12.85,4.25) .. node[Alabel,above,pos=0.50] {$b_{14}$} (b4);
	\draw[Bedge,Bcol14] (b1) .. controls (-1.25,-2.40) and (12.85,-2.40) ..  (t4);
	\draw[Bedge,Bcol23] (t2) to[bend right=7] node[Alabel,sloped,above] {$b_{23}$} (b3);
	\draw[Bedge,Bcol23] (b2) to[bend left=7] (t3);
	\draw[Bedge,Bcol24] (t2) to[bend right=12] node[Alabel,sloped,above,pos=0.48] {$b_{24}$} (b4);
	\draw[Bedge,Bcol24] (b2) to[bend left=12] (t4);
	\draw[Bedge,Bcol34] (t3) to[bend right=7] node[Alabel,sloped,above] {$b_{34}$} (b4);
	\draw[Bedge,Bcol34] (b3) to[bend left=7] (t4);
\end{tikzpicture}
\caption{$A_4$}
\end{subfigure}
\caption{The gadgets $A_2$, $A_3$ and $A_4$. Edges in one color class are drawn with the same color and the same label; distinct color classes are drawn with distinct colors. }
\label{fig:A234}
\end{figure}

\begin{lemma}\label{L516}
	For every $m\geq 2$,  $A_m$ is $\mathcal{R}(K_m,2)$-saturated. For $m\geq 3$, every copy of $K_m$ in $A_m$   is rainbow.
\end{lemma}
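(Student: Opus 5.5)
The plan is to use the structure of $A_m$ directly. For $m\ge 3$, $A_m$ is the complete $m$-partite graph with parts $V_1,\dots,V_m$, each part of size two. Its only nonedges are the $m$ pairs $v_{1,i}v_{2,i}$. A copy of $K_m$ meets every part in exactly one vertex, so it corresponds to a choice function $\sigma:[m]\to\{1,2\}$ and has vertex set $\{v_{\sigma(k),k}\}$. I would treat $A_2$ separately by a finite check on $K_4$ with its two-color pattern. For that check: $A_2$ has two perfect matchings of color $a$ and one of color $b$; any $K_2\cup K_2$ in $A_2$ is a perfect matching and hence monochromatic, which gives (a). Properties (b) and (c) are vacuous or immediate there: every vertex avoids an edge of each color, and $A_2$ has no nonedges.

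\textbf{Rainbowness and property (a).} For $m\ge3$, the edge of the copy given by $\sigma$ between parts $k<l$ receives $a_{kl}$ if $\sigma(k)=\sigma(l)$ and $b_{kl}$ otherwise. The index pair $\{k,l\}$ determines the edge, so every $K_m$ is rainbow. For (a), suppose a rainbow $K_m\cup K_2$ exists, with the $K_m$ given by $\sigma$. The $K_2$ must then use the two leftover vertices $v_{3-\sigma(k),k}$ and $v_{3-\sigma(l),l}$ for some $k\ne l$. Whether these lie on the same side is decided by the same parity as for $\sigma(k),\sigma(l)$, so the $K_2$ has exactly the color of the clique edge between parts $k$ and $l$. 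This is a contradiction.

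\textbf{Property (b).} Given a vertex $v=v_{j,i}$ and a color $c$, I would force $\sigma(i)=3-j$. If $c\in\{a_{kl},b_{kl}\}$, at least one of $k,l$ differs from $i$, and I would set that $\sigma$-value so that the clique edge between parts $k,l$ receives the other color of the pair. All remaining $\sigma$-values are chosen arbitrarily. Colors outside the palette impose no constraint.

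\textbf{Property (c).} This is the main step. Add the nonedge $v_{1,i}v_{2,i}$ with color $c$. The added edge cannot serve as the $K_2$, because the other $m-1$ parts cannot host a $K_m$. So the new edge lies in the clique. I would therefore take the clique to be $V_i$ together with one vertex from each part except $i$ and one excluded part $l$. The $K_2$ joins the unused vertex of some included part $k'$ to a vertex of $V_l$; this needs $m\ge3$. The clique uses both colors $a_{ik},b_{ik}$ for each included $k$, plus one color per pair of included parts. The pairs $\{k',l\}$ and $\{i,l\}$ never occur among clique colors, so the $K_2$ can be given color $a_{k'l}$ or $b_{k'l}$, whichever differs from $c$. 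The choice of $l$ and the sides is made as follows.
\begin{itemize}
\item If $c\in\{a_{ik},b_{ik}\}$, choose $l=k$.
\item If $c\in\{a_{kl'},b_{kl'}\}$ with $i\notin\{k,l'\}$, either choose $l=l'$ (when $m\ge4$ leaves room) or adjust sides so that the clique edge avoids $c$.
\end{itemize}
The expected obstacle is exactly this bookkeeping in the smallest case $m=3$, which I would verify explicitly.
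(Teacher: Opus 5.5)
Your proposal is correct and follows essentially the same route as the paper. It treats copies of $K_m$ as choice functions $\sigma$ across the parts $V_k$ for rainbowness and (a), and uses a side-adjusted clique avoiding $v$ and $c$ for (b). For (c), as in the paper, it builds a clique containing $V_i$ that omits a part $l$ indexed by $c$, with the $K_2$ joining $V_l$ to a spare vertex and its color chosen to differ from $c$.

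One small correction: your parenthetical is misplaced. Choosing $l=l'$ works for every $m\ge 3$, including $m=3$ (then the $K_2$ color is simply taken from $\{a_{kl'},b_{kl'}\}\setminus\{c\}$), so no separate check is needed. It is the alternative of keeping both parts $k,l'$ and adjusting sides that requires $m\ge 4$.
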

\noindent {\bf Proof.}
The $m=2$ case is checked directly from the definition of $A_2$. Assume $m\geq 3$.
Let $J$ be a copy of $K_m$ in $A_m$. Assume $V(J)=\{v_{1,i_s}:1\le s\le l,i_s\in [m]\}\cup \{v_{2,j_s}:1\le s\le m-l,j_s\in [m]\}$, where $0\le l\le m$ ($l=0$ implies $V(J)=\{v_{2,j_s}:1\le s\le m,j_s\in [m]\}$). Then $\{i_1,\ldots,i_l\}\cap \{j_1,\ldots,j_{m-l}\}=\emptyset$. By the definition of $\mathcal{C}$, we easily have that $J$    is rainbow.
So we just need to show that $A_m$ is $\mathcal{R}(K_m,2)$-saturated when $m\ge 3$.

(a) Suppose there is a rainbow $K_m\cup K_2$ in $A_m$. If $V(K_2)\subseteq \{v_{j,k}: 1 \leq k \leq m\}$ for some $j \in \{1,2\}$, say $K_2=v_{1,1}v_{1,2}$. Then $v_{2,1},v_{2,2}\in V(K_m)$ by the construction. But $\mathcal{C}(v_{1,1}v_{1,2})=\mathcal{C}(v_{2,1}v_{2,2})$, a contradiction. Suppose $V(K_2)\not\subseteq \{v_{j,k}: 1 \leq k \leq m\}$ for any $j \in \{1,2\}$, say $K_2=v_{1,2}v_{2,1}$. Then $v_{1,1}v_{2,2}\in E(K_m)$. But $\mathcal{C}(v_{1,2}v_{2,1})=\mathcal{C}(v_{1,1}v_{2,2})$, a contradiction. Thus
 $A_m$ contains no rainbow copy of $K_m\cup K_2$.

(b) Let $v_{i,j}\in V(A_m)$, say $i=1$, and $c$ a color, where $1\le j\le m$.  If $c\not\in \{a_{l,k}:1 \leq l < k \leq m\}$, then $A_m[\{v_{2,k}:1 \leq  k \leq m\}]$ is a rainbow copy of $K_m$ avoiding both $v_{1,j}$ and  $c$. Assume $c=a_{l,k}$ for some $1 \leq l < k \leq m$. If $j=l$ (resp. $j=k$),
 then $A_m[\{v_{2,s}:1\le s\le m,s\not=k\}\cup\{v_{1,k}\}]$ (resp. $A_m[\{v_{2,s}:1\le s\le m,s\not=l\}\cup\{v_{1,l}\}]$) is a rainbow copy of $K_m$ avoiding both $v_{1,j}$ and $c$. If $j\notin \{l,k\}$, each of the two copies above is a required rainbow copy.


(c) Let $e\notin E(A_m)$, say $e=v_{1,r}v_{2,r}$, and $c$ a color. In the following, the second subscript is taken modulo $m$. Denote $A_m+_c e$ as $A'_m$.
If $c\notin X_{\mathcal{C}}$, then let $H=A'_m[\{v_{1,s}:1\le s\le m,s\not=r+1\}\cup\{v_{2,r}\}]\cup v_{2,r+1}v_{2,r+2}$.
Assume $c=a_{i,j}$ for some $1 \leq i < j \leq m$. If $r\in\{i,j\}$ (resp. $r\notin\{i,j\}$), let $h$ be the unique element of $\{i,j\}\setminus\{r\}$ (resp. $h=i$). Set $H=A'_m[\{v_{1,s}:1\le s\le m,s\not=h\}\cup\{v_{2,r}\}]\cup v_{2,h-1}v_{1,h}$ if $h-1\not=r$ or $H=A'_m[\{v_{1,s}:1\le s\le m,s\not=h\}\cup\{v_{2,r}\}]\cup v_{1,h}v_{2,h+1}$ if $h+1\not=r$. In each case, $H$ is a rainbow copy of $K_m\cup K_2$ in $A_m+_c e$.
Respectively assume $c=b_{i,j}$ for some $1 \leq i < j \leq m$. If $r\in\{i,j\}$ (resp. $r\notin\{i,j\}$), let $h$ be the unique element of $\{i,j\}\setminus\{r\}$ (resp. $h=i$). Set $H=A'_m[\{v_{1,s}:1\le s\le m,s\not=h\}\cup\{v_{2,r}\}]\cup v_{2,h-1}v_{2,h}$ if $h-1\not=r$ or $H=A'_m[\{v_{1,s}:1\le s\le m,s\not=h\}\cup\{v_{2,r}\}]\cup v_{2,h}v_{2,h+1}$ if $h+1\not=r$. In each case, $H$ is a rainbow copy of $K_m\cup K_2$ in $A_m+_c e$.

Thus $A_m$ is $\mathcal{R}(K_m,2)$-saturated. \qed


\vspace{0.3em}

\begin{prop}\label{PClique}
	For every $s\geq 2$ and $n\geq 2s+2$,
	\[
		rsat(n,K_s\cup K_2)\leq
		\begin{cases}
			6 & \text{if }s=2,\\
			\binom{2s}{2}-s & \text{if }s\geq 3.
		\end{cases}
	\]
\end{prop}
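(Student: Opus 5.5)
The plan is to combine Lemma~\ref{L516} with Proposition~\ref{P513}. First I check that $K_s$ is endpoint-less for every $s\geq 2$. For $s\geq 3$ no vertex has degree one, so the set $M$ in Definition~\ref{D511} is empty and the condition holds vacuously. For $s=2$ both vertices have degree one, and $M$ induces the single edge, which is a matching covering $M$. Neither graph has isolated vertices.

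By Lemma~\ref{L516}, $A_s$ is $\mathcal{R}(K_s,2)$-saturated, with the coloring of Definition~\ref{D516}. Proposition~\ref{P513} then says that $A_s\cup kK_1$ is $(K_s\cup K_2)$-rainbow saturated for every $k\geq 1$. The graph $A_s$ has $2s$ vertices, so for $n\geq 2s+1$ I take $k=n-2s\geq 1$. The hypothesis $n\geq 2s+2$ in the statement is stronger than needed, so this step is fine. This gives a $(K_s\cup K_2)$-rainbow saturated graph on $n$ vertices, and hence $rsat(n,K_s\cup K_2)\leq e(A_s)$.

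It remains to count the edges of $A_s$. For $s=2$, $A_2$ is a copy of $K_4$, so $e(A_2)=6$. For $s\geq 3$, $A_s$ is $K_{2s}$ with the perfect matching $\{v_{1,k}v_{2,k}:1\leq k\leq s\}$ removed. Therefore
\[
e(A_s)=\binom{2s}{2}-s=4\binom{s}{2},
\]
which agrees with the size stated in Definition~\ref{D516}.

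No step here is a real obstacle, since all the work is in Lemma~\ref{L516}. The only points to watch are two. For $s=2$, the endpoint-less condition is needed to apply Proposition~\ref{P513}, and the component-swapping argument in its proof is what covers the case where the added edge plays the role of the $K_2$-component. Also, Proposition~\ref{P513} requires $k\geq 1$, which holds because $n>2s$.
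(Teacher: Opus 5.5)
Your proof is correct and matches the paper's: you apply Lemma~\ref{L516} and then Proposition~\ref{P513} with $k=n-2s\geq 1$, and count $e(A_2)=6$ and $e(A_s)=4\binom{s}{2}=\binom{2s}{2}-s$ for $s\geq 3$. One small correction to your closing remark: only the forward implication of Proposition~\ref{P513} is used here, and that direction needs neither the endpoint-less hypothesis nor the component-swapping step, which belongs to the converse; the copy through an added edge $vx$ comes directly from property~(b).
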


\noindent{\bf Proof.}
By Lemma~\ref{L516}, $A_s$ is $\mathcal{R}(K_s,2)$-saturated. Since $K_s$ is endpoint-less, Proposition~\ref{P513} implies that $A_s\cup (n-2s)K_1$ is $(K_s\cup K_2)$-rainbow saturated. For $s=2$ this gives the bound $rsat(n,2K_2) \le e(A_2)=6$. For $s\geq 3$,
\[
	rsat(n,K_s\cup K_2) \le e(A_s)=4\binom{s}{2}=\binom{2s}{2}-s.
\]
\qed
\vspace{0.5em}

\medskip\noindent {\bf Proof of Theorem~\ref{TIsolated}.}
The case $F=K_2$ is immediate since $rsat(n,K_2)=0$. Assume $F\not=K_2$. Then we have $F=H\cup K_2$ where $H$ has no isolated vertices. If $H$ is not complete, Proposition~\ref{PNonClique} supplies a constant upper bound; if $H=K_s$, Proposition~\ref{PClique} applies. In either case $rsat(n,F)=O(1)$ for all sufficiently large $n$. \qed
\vspace{0.5em}

\medskip\noindent {\bf Proof of Theorem~\ref{T11}.}
If $F$ contains an isolated edge, Theorem~\ref{TIsolated} gives $rsat(n,F)=O(1)$. If $F$ contains no isolated edge, Theorem~\ref{TLower} gives $rsat(n,F)=\Omega(n)$. Since $rsat(n,F)=O(n)$ for every graph $F$ by~\cite{BEH}, we conclude that $rsat(n,F)=\Theta(n)$ in the latter case. \qed

\section{Generalized friendship graphs}

The constructions in Section~3 use bounded colored gadgets to supply disjoint rainbow edges while avoiding a prescribed vertex or color. We now reuse this method to build the petal part of a generalized friendship graph, with a fixed clique serving as the common center. This section proves Theorem~\ref{TFriend}.

We first use the following theorem of Chakraborti, Hendrey, Lund and Tompkins to obtain the lower bound of Theorem~\ref{TFriend}.

\begin{theorem}\label{T41}{\bf \cite{CHA}}
	For every $s \geq 3$ and all sufficiently large $n$, we have
	$$
	rssat(n,K_s) = \left\{ \begin{aligned}
		& \; 2(n-2) \, , & & \text{if } s =3 \,, \\
		& \; (s-1)(n-s+1)+\binom{s-1}{2} \, , & \quad & \text{if } s \geq 4\,.
	\end{aligned}
	\right.
	$$
\end{theorem}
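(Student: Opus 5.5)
The statement is quoted from \cite{CHA}, and only the lower bound is used later. Still, here is how I would prove it: explicit colored constructions for the upper bounds, then a degree-counting argument for the matching lower bound.

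\emph{Upper bound for $s\geq 4$.} Note that $(s-1)(n-s+1)+\binom{s-1}{2}=e(K_{s-1}\vee \overline{K}_{n-s+1})$. So I take $G=K_{s-1}\vee\overline{K}_{n-s+1}$ with core $W$, $|W|=s-1$, and give $G$ a rainbow coloring. Every nonedge is $xy$ with $x,y\notin W$. Given a color $\alpha$, it lies on at most one edge of $G$. Delete from $W$ one vertex $w$ incident with that edge; if $\alpha$ sits on a core edge, either endpoint will do. Then $(W\setminus\{w\})\cup\{x,y\}$ spans a $K_s$ in $G+_\alpha xy$ that is rainbow and contains $xy$. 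Rainbow-freeness is not needed, since we only bound $rssat$.

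\emph{Upper bound for $s=3$.} The analogous graph $K_2\vee\overline{K}_{n-2}$ would give $2n-3$, so I instead drop the core edge and use $K_{2,n-2}$ with parts $\{a,b\}$ and $U$. Color the edges at $a$ injectively from a palette $P_a$, and the edges at $b$ injectively from a disjoint palette $P_b$. For a nonedge $xy$ with $x,y\in U$ and any color $\alpha$, at least one of $P_a,P_b$ misses $\alpha$, say $P_a$. Then $xya$ is a rainbow triangle in $G+_\alpha xy$. Nonedges $ab$ are handled through any $x\in U$, since $xa$ and $xb$ have different colors. This gives $2(n-2)$ edges.

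\emph{Lower bound.} Let $G$ be $K_s$-rainbow-semisaturated. For a nonedge $xy$, Proposition~\ref{P21} with a fresh color shows that $x$ and $y$ have at least $s-2$ common neighbors forming a clique. Choosing $\alpha$ to be a color on an edge at $x$ forces a second, different rainbow $K_{s-2}$ in the common neighbourhood. Hence every vertex nonadjacent to some other vertex has degree at least $s-1$, with one extra edge at $x$ outside the chosen copy.

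From here I would mimic the proof of Lemma~\ref{L41}. Take a minimum-degree vertex $x^*$. If $\delta\geq s-1$, we are essentially done up to lower-order terms. Otherwise, analyse $B=N(x^*)$, with $|B|\le s-2$, and show each $y\notin N[x^*]$ has at least $s-1$ neighbors, counting doubly those in $B$. This gives $e(G)\ge (s-1)n-O(1)$.

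The main obstacle is getting the exact additive constant $\binom{s-1}{2}-(s-1)^2$, and the value $2(n-2)$ for $s=3$. This needs a structural stability step. I would show that the vertices of degree exactly $s-1$ force their neighborhoods to coincide on a common set of size $s-1$ spanning a clique when $s\geq4$. For $s=3$, one instead shows the two common neighbors need not be adjacent. Then the remaining edges are counted exactly. I would try to derive this coincidence by applying the $\alpha$-avoidance condition to pairs of low-degree vertices. Since this theorem is quoted from \cite{CHA}, one may instead simply invoke it there.
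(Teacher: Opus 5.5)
\textbf{There is a genuine gap in the lower bound.} Your upper-bound constructions are correct. A rainbow coloring of $K_{s-1}\vee\overline{K}_{n-s+1}$ is $K_s$-rainbow-semisaturated: the color $\alpha$ lies on at most one edge, and discarding one core vertex removes that edge from the copy. A rainbow coloring of $K_{2,n-2}$ is even $K_3$-rainbow saturated with $2(n-2)$ edges. The paper proves nothing here; it quotes \cite{CHA} and uses only $rssat(n,K_k)\ge (k-1)n-O(1)$ in Section~4.

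The claim that $\delta\ge s-1$ leaves you ``essentially done up to lower-order terms'' is false. It only gives $e(G)\ge \frac{s-1}{2}n$, which is half the required coefficient. Moreover, you have just shown that every vertex with a non-neighbor has degree at least $s-1$. So for $n>s-1$ one always has $\delta\ge s-1$, and the branch $|B|\le s-2$ is vacuous. As written, the argument proves only $rssat(n,K_s)\ge \frac{s-1}{2}n$.

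The leading term can be repaired by moving the threshold to $2s-2$. If $\delta\ge 2s-2$, then $e(G)\ge (s-1)n$. Otherwise take any $y\notin N[x^*]$. Let $K$ be the clique obtained with a fresh color, pick $w\in K$, and let $K'$ be the clique obtained with $\alpha=\mathcal C(yw)$. Both cliques lie in $N(x^*)\cap N(y)\subseteq B$, and $w\notin K'$, so $d_B(y)\ge s-1$. The counting of Lemma~\ref{L41} with weight $2s-2$ then gives $e(G)\ge (s-1)(n-\delta-1)+\delta=(s-1)n-O(s^2)$. This beats the generic value $\eta(K_s)=2s-3$ because the avoided edge $yw$ lands in $B$, and it is all that Theorem~\ref{TFriend} needs.

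The statement, however, asserts exact values, and your ``stability step'' is a hope rather than an argument. Already at $\delta=s-1$ the count gives only $(s-1)(n-s+1)$. It discards exactly the $\binom{s-1}{2}$ edges inside $B$, which a real proof must recover. For $s=3$ and $\delta=3$ it gives only $2n-5$ rather than $2n-4$. So your proposal does not prove the equality; for that you must rely on \cite{CHA}, as the paper does.
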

\vspace{0.1em}

Let $k=p+q$. Since every edge of $F_{t,p,q}=(tK_p) \vee K_q$ lies in a copy of $K_k$, every $F_{t,p,q}$-rainbow saturated graph is $K_k$-rainbow-semisaturated. Thus Theorem~\ref{T41}, applied with $s=k$, gives
\[
	rsat(n,F_{t,p,q})\ge rssat(n,K_k)=(k-1)n-O(1)=(p+q-1)n-O(1).
\]
This proves the lower bound of Theorem~\ref{TFriend}. It remains to exhibit an $F_{t,p,q}$-rainbow saturated graph with at most $(p+q-1)n+O(1)$ edges for each admissible $(t,p,q)$ where $t\ge 2$.

\FloatBarrier
\subsection{Core constructions}

We use the gadget graph $A_m$ from Definition~\ref{D516}. In each construction below the vertex set is partitioned into a center $C$, a bounded support $S$, an independent set $R$ with $N(r)=V(C)$ for every $r\in R$, and (when present) a copy of $B_t$. Unspecified edges are absent, and unspecified colors are fresh and pairwise distinct.

\medskip\noindent\textbf{The $p\geq3$ case.}

Let $k=p+q$. Take $C\cong K_{k-1}$ with vertex set $\{c_1,\ldots,c_{k-1}\}$, $S=(t-2)A_p \cup A_{p-1}$,
where $V(C)\cap V(S)=\emptyset$, and $R$  the remaining vertices which is an independent set. Let $G_1=C\vee (S\cup R)$ and each $A$-copy in $S$ uses a disjoint color set,
 the other edges of $G_1$ all receive pairwise distinct fresh colors (see Figure 2). Denote the $t-1$ components of $S$ by $A_p^1,\ldots,A_p^{t-2},A_{p-1}^{t-1}$. For each component $A_m^i$ in $S$ with $m\ge 3$, denote $V(A_m^i)=V_1^i\cup\cdots\cup V_m^i$ and the color set $\{a_{r,s}^i,b_{r,s}^i:1\le r<s\le m\}$, where $V_j^i=\{v_{1,j}^i,v_{2,j}^i\}$ for $1\le j\le m$ and $1\le i\le t-1$.

\begin{figure}[!ht]
\centering
\tikzset{
	casebox/.style={draw,dashed,rounded corners=5pt,line width=0.8pt},
	caseblock/.style={draw,rounded corners=4pt,minimum width=1.45cm,minimum height=0.62cm,font=\small,align=center,line width=0.9pt},
	casevertex/.style={circle,draw,fill=white,inner sep=0.8pt,minimum size=5.2mm,font=\scriptsize,line width=0.85pt},
	casejoin/.style={line width=1.2pt}
}
\begin{tikzpicture}[scale=0.78]
	\draw[casebox,fill=orange!5] (-5.35,-2.35) rectangle (-1.55,2.10);
	\node[font=\small] at (-3.45,2.41) {$S=(t-2)A_p\cup A_{p-1}$};
	\node[caseblock,fill=orange!16] (Apone) at (-3.45,1.15) {$A_p^1$};
	\node[caseblock,fill=orange!10] (Apdots) at (-3.45,0.20) {$\cdots$};
	\node[caseblock,fill=orange!16] (Aplast) at (-3.45,-0.75) {$A_p^{t-2}$};
	\node[caseblock,fill=yellow!18] (Apm) at (-3.45,-1.70) {$A_{p-1}^{t-1}$};

	\draw[casebox,fill=blue!5] (0.30,-1.55) rectangle (2.60,1.55);
	\node[font=\small] at (1.45,1.86) {$C\cong K_{k-1}$};
	\node[casevertex,fill=blue!7] (c1) at (1.45,0.72) {$c_1$};
	\node[casevertex,fill=blue!7] (c2) at (0.80,0.02) {$c_2$};
	\node[casevertex,fill=blue!7] (cd) at (2.10,0.02) {$\cdots$};
	\node[casevertex,fill=blue!7] (ck) at (1.45,-0.76) {$c_{k-1}$};
	\foreach \u/\v in {c1/c2,c1/cd,c1/ck,c2/cd,c2/ck,cd/ck}{
		\draw[line width=0.7pt,draw=blue!55!black] (\u)--(\v);
	}

	\draw[casebox,fill=green!5] (4.50,-1.70) rectangle (7.50,1.70);
	\node[font=\small] at (6.00,2.38) {$R$};
	\node[font=\scriptsize] at (6.00,1.98) {independent roots};
	\node[casevertex,fill=green!7] (r1) at (5.20,0.45) {$r_1$};
	\node[casevertex,fill=green!7] (r2) at (6.80,0.45) {$r_2$};
	\node[casevertex,fill=green!7] (rd) at (5.20,-0.65) {$\cdots$};
	\node[casevertex,fill=green!7] (rn) at (6.80,-0.65) {$r$};

	\draw[casejoin,draw=red!75!black] (-1.55,1.00)--(0.30,1.00);
	\draw[casejoin,draw=blue!75!black] (-1.55,0.32)--(0.30,0.32);
	\draw[casejoin,draw=green!50!black] (-1.55,-0.36)--(0.30,-0.36);
	\draw[casejoin,draw=magenta!70!black] (-1.55,-1.04)--(0.30,-1.04);
	\node[font=\small,fill=white,inner sep=1pt] at (-0.63,1.34) {$S\vee C$};

	\draw[casejoin,draw=cyan!65!black] (2.60,0.90)--(4.50,0.90);
	\draw[casejoin,draw=orange!80!black] (2.60,0.30)--(4.50,0.30);
	\draw[casejoin,draw=violet!80!black] (2.60,-0.30)--(4.50,-0.30);
	\draw[casejoin,draw=yellow!55!black] (2.60,-0.90)--(4.50,-0.90);
	\node[font=\small,fill=white,inner sep=1pt] at (3.55,1.24) {$C\vee R$};

\end{tikzpicture}
\caption{The construction of $G_1$, where $k=p+q$. The blocks $A_p$ and $A_{p-1}$ are shown as single components; the colored bundles indicate complete joins with fresh distinct colors. There are no edges between $S$ and $R$.}
\label{fig:case1}
\end{figure}

\begin{prop}\label{P519}
	 $G_1$  is $F_{t,p,q}$-rainbow saturated for $p \geq 3$ and has $(p+q-1)n+O(1)$ edges.
\end{prop}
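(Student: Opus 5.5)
The proof has three parts: count the edges, show that $G_1$ has no rainbow $F_{t,p,q}$, and verify semisaturation through Proposition~\ref{P21}. Throughout, $k=p+q$.

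\textbf{Edge count.} $C$ contributes $\binom{k-1}{2}$ edges and the join $C\vee(S\cup R)$ contributes $(k-1)(n-k+1)$. The support $S$ has $|S|=2p(t-2)+2(p-1)$ vertices and $4(t-2)\binom p2+4\binom{p-1}{2}$ edges, which is a constant. Hence $e(G_1)=(k-1)n+O(1)=(p+q-1)n+O(1)$.

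\textbf{Semisaturation.} For a nonedge $e=xy$ and a color $\alpha$ I will build a rainbow $F_{t,p,q}$ through $e$ that avoids $\alpha$ off $e$. Every edge of $C$ and of the joins has a distinct fresh color, so $\alpha$ lies on at most one such edge, or inside one gadget. The base template is:
\begin{itemize}
\item the center is a $q$-subset $Q\subseteq C$;
\item each $A_p^i$ gives a petal, namely a rainbow $K_p$ avoiding $\alpha$ and any prescribed vertex, by property (b) of Lemma~\ref{L516};
\item $A_{p-1}^{t-1}$ gives a rainbow $K_{p-1}$ avoiding $\alpha$ by (b), completed to a petal by one vertex $c\in C\setminus Q$;
\item the remaining $p-2$ vertices $P$ of $C$ form, together with $x,y$, the last petal $P\cup\{x,y\}$.
\end{itemize}
This last petal works in every case where $x$ and $y$ are both adjacent to all of $C$.

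The cases are as follows.
\begin{enumerate}
\item If $x,y\in R$, the template applies directly.
\item If $x\in R$ and $y\in S$, or if $x,y$ lie in different components of $S$, the template still applies. The component(s) containing $x$ or $y$ supply their petal through (b) applied with that vertex avoided.
\item If $x,y$ lie in the same gadget $A_m^i$, use property (c) there to get a rainbow $K_m\cup K_2$ through $e$ avoiding $\alpha$. The $K_m$ is that component's petal (completed by $c$ if $m=p-1$), and the $K_2$ together with $P$ is the last petal. If $e$ is the $K_2$ edge, the roles are as stated; if $e$ lies in the $K_m$, it sits inside a petal, which is equally fine.
\end{enumerate}
If $\alpha$ lies on a single $C$- or join-edge, I relabel which vertices of $C$ play $Q$, $c$ and $P$ so that this edge is unused. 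There is room for this because $k-1\ge 3$ and $R$ is large, since $n$ is large.

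\textbf{Rainbow-freeness (main obstacle).} Every petal together with the center induces a $K_k$. Cliques in $S\cup R$ have size at most $p$, cliques in $R$ are single vertices, and there are no $S$--$R$ edges. So every $K_k$ meets $C$ in at least $q$ vertices, and its part outside $C$ is a clique inside one component of $S\cup R$.

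Suppose the center has $j$ vertices in $C$. If $j<q$, the outside part of the center is adjacent to the outside part of every petal, so everything outside $C$ lives in a single component. Each of the $t\ge 2$ petals then needs $p-|\text{outside part}|$ vertices of $C$, and this quickly exceeds $k-1-j$. If $j=q$, only $p-1$ vertices of $C$ remain for $t$ petals, and I count how many petal vertices each component can absorb.

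The key lemma I would prove is about vertex-disjoint cliques $Q_1,\dots,Q_r$ in $A_m$ with pairwise distinct colors overall. The claim is $\sum|Q_i|\le m+r-1$, with the stronger bound $\le m$ when some $|Q_i|=m$, which is exactly property (a) extended. The proof uses the pairing $V_j=\{v_{1,j},v_{2,j}\}$ and the fact that every color appears on the two "twin" edges. Two cliques meeting the same two columns $a,b$ repeat $a_{a,b}$ or $b_{a,b}$, so distinct cliques essentially use distinct columns, except that a single column may be shared.

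Summing the deficit $p-|Q_i|$ over all pieces, and using $t-2$ copies of $A_p$, one $A_{p-1}$ and singletons from $R$, the demand on $C$ is at least $p$, which exceeds the available $p-1$. This contradicts the existence of a rainbow copy. Getting this accounting exactly tight, especially the interaction of the $A_{p-1}$ component and the pieces sharing a column, is where I expect the real work to lie.
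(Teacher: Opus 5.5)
Your edge count and semisaturation argument are correct and follow the paper: your vertex $c$ and set $P$ are its $D_1$ and $D_2$, and relabeling $C$ to dodge the unique $C$- or join-edge of color $\alpha$ is its three-bullet partition. For a $C$-edge you put one end as $c$ and the other in $P$, which needs $p-2\ge1$. Your overall plan for rainbow-freeness is also the paper's: first place the center, then count deficits on $D=V(C)\setminus V(Q)$. The gap is in executing that plan.

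\textbf{The key lemma is false.} In $A_3$ the edges $v_{1,1}v_{1,2}$, $v_{2,1}v_{1,3}$, $v_{2,2}v_{2,3}$ carry the colors $a_{1,2},b_{1,3},a_{2,3}$. So $r=3$ disjoint cliques with all colors distinct cover $6>m+r-1$ vertices. Pairs sharing at most one column still allow up to $\binom r2$ doubly covered columns. The ``$\le m$'' strengthening also fails once singleton pieces are allowed: take the first row of $A_m$ together with $v_{2,1}$. Singleton pieces do occur, namely a petal with one vertex in a gadget and $p-1$ vertices in $D$. What is true is the pairwise statement, the paper's Claim 1: two petals meet $A_m$ in at most $m+1$ vertices. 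Combined with the trivial bound $2m$ for three or more petals in one gadget, this suffices for your deficit count when $V(Q)\subseteq V(C)$.

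\textbf{The case where the center has $s=q-j\ge1$ vertices outside $C$.} Counting alone settles this only for $t\ge3$. For $t=2$ the sole gadget is $A_{p-1}$, and $|V(A_{p-1})\cup V(C)|=3p+q-3\ge 2p+q$. The $C$-budget forces $a_1+a_2\ge p-s+1$. Your lemma with $r=3$ gives only $s+a_1+a_2\le p+1$, which is exactly what is required, so there is no contradiction.

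The missing step is the following. The $s$ outside center vertices are adjacent to every petal vertex, so they occupy $s$ columns the petals avoid. The two petals therefore live in the sub-gadget on the remaining $p-1-s$ columns. There the pairwise bound gives $a_1+a_2\le p-s$, which contradicts the budget; this is the paper's $A'$ step.

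\textbf{The case $p=3$.} Here $A_{p-1}=A_2$ is a two-colored $K_4$ with no column structure, so your claim that cliques in $S\cup R$ have at most $p$ vertices is false. You need instead that every triangle and every $2K_2$ in $A_2$ repeats a color. Then a petal takes at most $2$ vertices of $A_2$, and two petals take at most $3$. In particular, a rainbow $F_{2,3,q}$ meets $A_2$ in at most $3$ vertices, as in the paper.
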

\noindent {\bf Proof.} \textit{Rainbow-freeness.}
Suppose $J$ is a  rainbow copy of $F_{t,p,q}$ in $G_1$, with center $Q$ and petals $P_1,\ldots,P_t$. Then $Q\cong K_q$ and $P_i\cong K_p$ for $1\le i\le t$. Note that $d_J(v)=tp+q-1$ for any $v\in V(Q)$.
We first have the following claim.

\vskip.2cm

\noindent{\bf Claim 1} For any $P_i,P_j\in \{P_1,\ldots,P_t\}$ with $i\not=j$ and any $1\le l\le t-1$, we have
\begin{equation}\label{localbound}
	\bigl|(V(P_i)\cup V(P_j))\cap V(A_m^l)\bigr|\le m+1,
\end{equation} where $m=p$ for $1\le l\le t-2$ and $m=p-1$ for $l=t-1$.


\noindent{\bf Proof of Claim 1~} When $m=p-1$ and $p=3$, we easily have $|(V(P_i)\cup V(P_j))\cap V(A_2^l)|\le 3$. Suppose there are $P_i,P_j\in \{P_1,\ldots,P_t\}$, say $i=1,j=2$, such that $|(V(P_1)\cup V(P_2))\cap V(A_m^l)|\ge m+2$, where $1\le l\le t-1$.
Then  there are $V_r^l,V_s^l\subseteq V(A_m^l)$ such that $V_r^l\cup V_s^l\subseteq V(P_1)\cup V(P_2)$, where $1\le r,s\le m$. Assume $r<s$. Then $J[V(P_1)\cup V(P_2)]$ has two edges which are colored by  $a^l_{r,s}$ or $b^l_{r,s}$, a contradiction.\q
\vskip.2cm


Since $d(r)=k-1=p+q-1<tp+q-1$ for every $r\in R$, we have $V(Q)\cap R=\emptyset$. We claim $V(Q)\subseteq V(C)$. Suppose $Q$ meets a block $A_m^i$ ($m\in\{p-1,p\}$, $1\le i\le t-1$). Then $V(J)\subseteq V(A_m^i)\cup V(C)$. So
\[
	tp+q\le |V(A_m^i)\cup V(C)|=2m+p+q-1\le 3p+q-1
\]
which implies $t=2$. Then we have $i=1$, $m=p-1$ and every petal meets $A_{p-1}^1$.
When $p\ge 4$, assume, without loss of generality, that $V(Q)\cap V(A_{p-1}^1)=\{v_{1,1}^1,\ldots,v_{1,s}^1,v_{2,s+1}^1,\ldots,v_{2,s+l}^1\}$, where $s,l\ge 0$ and $1\le s+l\le p-1$. If $ s+l= p-1$, then $V(P_1)\cup V(P_2)\subseteq V(C)\setminus V(Q)$ which implies $2p\le q+p-1-(q-p+1)$, a contradiction. If $s+l=p-2$, then $|(V(P_1)\cup V(P_2))\cap V(A_{p-1}^1)|\le 2$ which implies $|(V(P_1)\cup V(P_2))\cap (V(C)\setminus V(Q))|\ge 2p-2$. But $|V(C)\setminus V(Q)|=2p-3$, a contradiction. If $s+l=p-3$, then $|(V(P_1)\cup V(P_2))\cap V(A_{p-1}^1)|\le 3$ by the construction of $A_{p-1}^1$ which implies $|(V(P_1)\cup V(P_2))\cap (V(C)\setminus V(Q))|\ge 2p-3$. But $|V(C)\setminus V(Q)|=2p-4$, a contradiction. So we have $1\le s+l\le p-4$.

Denote $G_1[\cup_{r=s+l+1}^{p-1} V_r^1]=A'$. Then $A'$ is a subgraph of $A_{p-1-(s+l)}$. We have $V(P_1)\cup V(P_2)\subseteq V(A')\cup (V(C)\setminus V(Q))$. By Claim 1, $|(V(P_1)\cup V(P_2))\cap V(A')|\le p-(s+l)$. This implies $|(V(P_1)\cup V(P_2))\cap (V(C)\setminus V(Q))|\ge p+(s+l)$. But $|V(C)\setminus V(Q)|=p+q-1-(q-(l+s))=p+(s+l)-1$, a contradiction.

When $p=3$, by $J[V(J)\cap V(A_2^1)]$ being rainbow, $J[V(J)\cap V(A_2^1)]$ contains no $K_3$ or $2K_2$. However, each induced subgraph with four vertices in $J$ contains a copy of $K_3$ or $2K_2$, so we have $|V(J)\cap V(A_2^1)|\leq 3$. Then $|V(J)|\leq 3+|V(C)|= q+5< q+2p$, a contradiction. Hence $V(Q)\subseteq V(C)$.

Set $D=V(C)\setminus V(Q)$. Then $|D|=p-1$. Since $V(Q)\subseteq V(C)$, for each $1\le i\le t$, we have $V(P_i)\setminus V(C)\subseteq V(A_m^j)$ for some component $A_m^j$ in $S$, or $V(P_i)\setminus V(C)\subseteq R$ and $|V(P_i)\setminus V(C)|=1$. Suppose there are two petals, say $P_1$ and $P_2$, such that $V(P_i)\cap V(A^j_m)\not=\emptyset$, where $i=1,2$, $1\le j\le t-1$ and $m\in\{p-1,p\}$. By (\ref{localbound}), we have$$p-1\ge |(V(P_1)\cup V(P_2))\cap D|\ge 2p-(m+1).$$ Then we have $m=p$ and $|(V(P_1)\cup V(P_2))\cap D|=p-1$ which implies $D\subseteq V(P_1)\cup V(P_2)$ and $t\ge 3$. Assume without loss of generality that $j=1$. Then  $(V(P_i)\setminus V(C))\cap V(A_{p-1}^{t-1})=\emptyset$ by $A_{p-1}^{t-1}$ being rainbow-$K_p$-free and $(V(P_i)\setminus V(C))\cap R=\emptyset$ by $p\ge 3$ for $3\le i\le t$.  So the remaining $t-3$ copies of $A_p$ contribute at most one petal each, giving at most $t-1$ petals, a contradiction.

Note that if there is
 $1\le i\le t$ such that $(V(P_i)\setminus V(C))\cap V(A_{p-1}^{t-1})\not=\emptyset$, then $(V(P_j)\setminus V(C))\cap R=\emptyset$ for any $1\le j\le t$ and vice versa.  Since every component $A_m^i$ in $S$ meets at most one petal, $G_1$ is $F_{t,p,q}$-rainbow free.

\textit{Saturation.}
Put $H_0=(t-2)K_p\cup K_{p-1}$. By Lemmas~\ref{L515} and~\ref{L516}, $S=(t-2)A_p \cup A_{p-1}$ is $\mathcal{R}(H_0,2)$-semisaturated. Fix a nonedge $e$ of $G_1[V(S)\cup R]$ and a color $\alpha$.
Since $S=(t-2)A_p \cup A_{p-1}$ is $\mathcal{R}(H_0,2)$-semisaturated, it is not difficult to check that  there is a rainbow copy $K$ of $H_0\cup K_2$ in $G_1[V(S)\cup R]+_\alpha e$ containing $e$. Now we find a rainbow copy of $F_{t,p,q}$ in $G_1+_\alpha e$.  Denote $K=P_1\cup\ldots\cup P_{t-2}\cup L\cup M$, where $P_i\cong K_p$ for $1\le i\le t-2$, $L\cong K_{p-1}$ and $M\cong K_2$. We partition
$V(C)=D_1\cup D_2\cup Q$ such that $|D_1|=1$, $ |D_2|=p-2$ and $ |Q|=q$ as follows:
\begin{itemize}

\item if all edges in $E(V(S)\cup R,V(C))\cup E(G_1[C])$ are not colored by $\alpha$, we arbitrarily partition $V(C)$;

\item if there is $cz\in E(G_1)$ with $c\in V(C)$ and $z\notin V(C)$ which is colored by $\alpha$, then let $c\in D_1$ (resp. $c\in D_2$) when $z\notin V(L)$ (resp. $z\notin V(M)$);

\item if there is $c_1c_2\in E(G_1)$ with $c_1,c_2\in V(C)$ which is colored by $\alpha$, then  put $c_1\in D_1$ and $c_2\in D_2$.
\end{itemize}
In each case, $(P_1\cup\ldots\cup P_{t-2}\cup (L\vee D_1)\cup (M\vee C[D_2]))\vee C[Q]$ is a rainbow copy of $F_{t,p,q}$ in $G_1+_\alpha e$. One can check that $e(G_1)=(p+q-1)n+O(1)$.
\qed
\vspace{0.3em}

\FloatBarrier
\medskip\noindent\textbf{The $p=2$ and $q\geq3$ case.}

Let $C,S\cong K_{q+1}$ with vertex set $\{c_1,\ldots,c_{q+1}\}$ and $\{s_1,\ldots,s_{q+1}\}$, respectively and  $R$  the remaining vertices which is an independent set.
Let $G_2=C\vee (S\cup R)$ and
color the edges so that $\gamma_{ij}:=\mathcal{C}(s_is_j)=\mathcal{C}(c_ic_j)$ for each $1\leq i<j\leq q+1$, and give every other edge a fresh and distinct color (see Figure 3).

\begin{figure}[!ht]
	\centering
	\tikzset{
		casebox/.style={draw,dashed,rounded corners=5pt,line width=0.8pt},
		caseblock/.style={draw,rounded corners=4pt,minimum width=1.45cm,minimum height=0.62cm,font=\small,align=center,line width=0.9pt},
		casevertex/.style={circle,draw,fill=white,inner sep=0.8pt,minimum size=5.2mm,font=\scriptsize,line width=0.85pt},
		casejoin/.style={line width=1.2pt}
	}
	\begin{tikzpicture}[scale=0.78]
		\draw[casebox,fill=orange!5] (-5.55,-0.65) rectangle (-1.35,2.10);
		\node[font=\small] at (-3.45,2.42) {$S\cong K_{q+1}$};
		\node[casevertex,fill=orange!8] (s1) at (-4.60,1.35) {$s_1$};
		\node[casevertex,fill=orange!8] (s2) at (-2.30,1.35) {$s_2$};
		\node[casevertex,fill=orange!8] (sd) at (-4.60,0.45) {$\cdots$};
		\node[casevertex,fill=orange!8] (sq) at (-2.30,0.45) {$s_{q+1}$};
		\draw[line width=0.75pt,draw=red!75!black] (s1)--(s2);
		\draw[line width=0.75pt,draw=violet!80!black] (s1)--(sd);
		\draw[line width=0.75pt,draw=blue!75!black] (s1)--(sq);
		\draw[line width=0.75pt,draw=teal!70!black] (s2)--(sd);
		\draw[line width=0.75pt,draw=green!50!black] (s2)--(sq);
		\draw[line width=0.75pt,draw=yellow!55!black] (sd)--(sq);

		\draw[casebox,fill=blue!5] (0.15,-1.70) rectangle (2.60,1.70);
		\node[font=\small] at (1.38,2.02) {$C\cong K_{q+1}$};
		\node[casevertex,fill=blue!7] (c1) at (1.38,0.82) {$c_1$};
		\node[casevertex,fill=blue!7] (c2) at (0.72,0.10) {$c_2$};
		\node[casevertex,fill=blue!7] (cd) at (2.04,0.10) {$\cdots$};
		\node[casevertex,fill=blue!7] (cq) at (1.38,-0.76) {$c_{q+1}$};
		\draw[line width=0.75pt,draw=red!75!black] (c1)--(c2);
		\draw[line width=0.75pt,draw=violet!80!black] (c1)--(cd);
		\draw[line width=0.75pt,draw=blue!75!black] (c1)--(cq);
		\draw[line width=0.75pt,draw=teal!70!black] (c2)--(cd);
		\draw[line width=0.75pt, draw=green!50!black] (c2)--(cq);
		\draw[line width=0.75pt,draw=yellow!55!black] (cd)--(cq);
	
		\draw[casebox,fill=green!5] (4.50,-1.70) rectangle (7.50,1.70);
		\node[font=\small] at (6.00,2.38) {$R$};
		\node[font=\scriptsize] at (6.00,1.98) {independent roots};
		\node[casevertex,fill=green!7] (r1) at (5.20,0.45) {$r_1$};
		\node[casevertex,fill=green!7] (r2) at (6.80,0.45) {$r_2$};
		\node[casevertex,fill=green!7] (rd) at (5.20,-0.65) {$\cdots$};
	
		\draw[casejoin,draw=brown!85!black] (-1.35,1.00)--(0.15,1.00);
		\draw[casejoin,draw=lime!65!black] (-1.35,0.45)--(0.15,0.45);
		\draw[casejoin,draw=pink!85!black] (-1.35,-0.10)--(0.15,-0.10);
		\node[font=\small,fill=white,inner sep=1pt] at (-0.60,1.32) {$S\vee C$};
	
		\draw[casejoin,draw=orange!80!black] (2.60,0.90)--(4.50,0.90);
		\draw[casejoin,draw=violet!80!black] (2.60,0.30)--(4.50,0.30);
		\draw[casejoin,draw=yellow!55!black] (2.60,-0.30)--(4.50,-0.30);
		\draw[casejoin,draw=teal!70!black] (2.60,-0.90)--(4.50,-0.90);
		\node[font=\small,fill=white,inner sep=1pt] at (3.55,1.24) {$C\vee R$};
	
	\end{tikzpicture}
	\caption{The construction of $G_2$. Equal colors on $S$ and $C$ indicate $\gamma_{ij}$. There are no edges between $S$ and $R$.}
	\label{fig:case2}
\end{figure}

\begin{prop}\label{P520}
	 $G_2$  is $F_{2,2,q}$-rainbow saturated for $q \geq 3$.
\end{prop}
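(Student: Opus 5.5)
The statement asks for two things: that $G_2$ contains no rainbow copy of $F=F_{2,2,q}$, and that every nonedge, added in any color, creates one. I would check both directly, using the shape of $F$ and the coloring of $G_2$.

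\emph{Shape of $F$ and of $G_2$.} $F=2K_2\vee K_q$ has $q+4$ vertices. Its only nonedges are the four pairs joining the two petals, and these form a $C_4$ on the four petal vertices. In $G_2$ the nonedges are exactly the pairs inside $R$ and the pairs between $R$ and $S$. A vertex $r\in R$ has $N(r)=V(C)$. The only repeated colors in $G_2$ are the pairs $s_is_j$, $c_ic_j$, which share the color $\gamma_{ij}$.

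\emph{Rainbow-freeness.} Let $J$ be a rainbow copy of $F$ in $G_2$, with center $Q$.

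First suppose $J$ contains some $r\in R$. Then $r\notin V(Q)$, since otherwise $V(J)\subseteq V(C)\cup\{r\}$ and $|V(J)|\le q+2$. So $r$ lies in a petal $\{r,x\}$, and both $x$ and all of $Q$ lie in $C$. Hence $Q\cup\{x\}=V(C)$ spans a $K_{q+1}$ in $J$, which uses every color $\gamma_{ij}$. The other petal cannot lie in $C$, because $C$ has no vertices left. Since $R$ is independent, at most one of its vertices lies in $S$ and at most one in $R$; if one vertex is in $S$, the other must also be in $S$ (it is nonadjacent to $R$ and cannot be in $C$). If both petal vertices are in $S$, the petal edge repeats a $\gamma$-color. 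If one petal vertex is in $S$ and the other in $R$, they are nonadjacent, so this cannot be a petal. Either way $J$ is not rainbow.

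Otherwise $V(J)\subseteq V(S)\cup V(C)$. Let $I=\{i: c_i\in V(J)\}$ and $I'=\{i: s_i\in V(J)\}$. Then $|I|+|I'|=q+4$ and both sets lie in $[q+1]$, so $|I\cap I'|\ge 3$. Take three common indices $i,j,k$. Rainbowness requires that for each of the three pairs $\{i,j\},\{i,k\},\{j,k\}$, at least one of $c_\cdot c_\cdot$ or $s_\cdot s_\cdot$ is a nonedge of the copy. So we need three non-adjacencies of $F$, lying inside the triangles $c_ic_jc_k$ and $s_is_js_k$ and covering all three index pairs. These non-adjacencies must come from the $C_4$ of petal pairs. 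That $C_4$ is triangle-free, so it meets each triangle in at most two edges, and any two of its edges inside one triangle share a vertex. If it has two edges in the $c$-triangle, it already occupies $c_i,c_j,c_k$ and cannot also contain the needed $s$-pair, since that would require five vertices. The split of two edges in the $s$-triangle and one in the $c$-triangle fails symmetrically. A split with at most one edge in each triangle covers at most two of the three pairs. So a repeated $\gamma$-color always appears. This small case analysis, together with making sure no other configuration of $J$ is missed, is the step I expect to be the main obstacle.

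\emph{Saturation.} There are two kinds of nonedge, and I would treat each by a short case split on $\alpha$.

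Nonedge $rs_i$ with color $\alpha$. There are two templates.
\begin{itemize}
\item[(i)] Petals $\{r,s_i\}$ and $\{c,s_j\}$, with center $Q=V(C)\setminus\{c\}$. The copy uses all $\gamma$-colors, the fresh $C$–$S$ edges from $s_i$ and $s_j$ (including $cs_j$), and the fresh $C$–$R$ edges from $r$. This works whenever $\alpha$ is not a $\gamma$-color and not one of the fresh colors it uses; there is room to re-choose $c$ and $j$ to dodge a single fresh color.
\item[(ii)] Petals $\{r,s_i\}$ and $\{s_j,s_k\}$, with center $Q=V(C)\setminus\{c_d\}$ where $d\in\{j,k\}$. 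The color $\gamma_{jk}$ on $s_js_k$ is then not used inside $Q$, because $c_jc_k$ touches the omitted vertex $c_d$. This handles $\alpha=\gamma_{ab}$: choose $d\in\{a,b\}$ so that $c_ac_b$ is not in $C[Q]$, and choose $\{j,k\}\neq\{a,b\}$. Since $q\ge3$ there are enough indices to do this.
\end{itemize}

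Nonedge $rr'$ with color $\alpha$. Use the petal $\{r,r'\}$ together with the same two templates for the second petal ($\{c,s_j\}$ or $\{s_j,s_k\}$), with the same choices of omitted vertex to avoid $\alpha$.

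In every case the resulting copy of $F$ is rainbow and contains the added edge, which proves the proposition.
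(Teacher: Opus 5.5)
Your proof is correct. The saturation half matches the paper: your templates (ii) and (i) are the paper's families $\Psi_{ij}$ (or $\Phi_{ij}$ when $e=rr'$) and $\Omega_{ij}$. Only the assignment of colors to templates differs. The paper uses $\Phi_{ij}$ for every color when $e=r_1r_2$, and uses $\Omega_{21}$ only for $\alpha\in\{\mathcal{C}(s_1c_1),\mathcal{C}(r_1c_1)\}$ when $e=r_1s_1$. You instead send every non-$\gamma$ color to template (i).

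Your rainbow-freeness argument takes a genuinely different route. The paper writes the center as $V(Q)=S_A\cup C_B$ and argues by cases on $A$ and $B$:
\begin{itemize}
\item it excludes $A=\emptyset$ and $B=\emptyset$;
\item it shows $|A\cap B|\le 1$ and excludes $|A\cap B|=1$;
\item when $A\cap B=\emptyset$, it enumerates the possible petal pairs through $\{s_\ell,c_\ell\}$.
\end{itemize}
You never locate the center. After disposing of $R$, the pigeonhole bound $|I\cap I'|\ge (q+4)-(q+1)=3$ reduces everything to one observation. The four nonedges of $F$ form a $C_4$, and a triangle-free graph on four vertices cannot meet the triangles $c_ic_jc_k$ and $s_is_js_k$ so as to cover all three index pairs. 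This is shorter and avoids the error-prone enumeration of petal configurations. It also does not use $q\ge 3$, which you need only for saturation. The paper's case analysis gives more structural information about where the center of a hypothetical copy could sit, but none of that is needed here.

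One wording slip: in the $R$ case, ``at most one of its vertices lies in $S$'' should say that at most one lies in $R$. The case split that follows is still complete: both vertices in $S$, one in $S$ and one in $R$, and both in $R$ excluded by independence.
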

\noindent {\bf Proof.} Put $I=\{1,\ldots,q+1\}$, $C_X=\{c_i:i\in X\}$ and $S_X=\{s_i:i\in X\}$, where $X\subseteq I$.

\textit{Rainbow-freeness.}
Suppose $J$ is a rainbow copy of $F_{2,2,q}=2K_2\vee K_q$ in $G_2$ with center $Q$ and petals $P_1,P_2$, where $Q\cong K_q$ and $P_i\cong K_2$ for $i=1,2$. Then $V(Q)\cap R=\emptyset$; otherwise  the remaining $q + 3$ vertices of $J$ would lie in $V(C)$, a contradiction. Write $V(Q)=S_A\cup C_B$ with $A,B\subseteq I$.

Suppose $A=\emptyset$. Assume $V(Q)=C_{I\setminus\{q+1\}}$. Since $J$ is rainbow and $|(V(P_1)\cup V(P_2))\cap V(S)|\ge 2$, by the colors $\gamma_{ij}$, we have $|(V(P_1)\cup V(P_2))\cap V(S)|= 2$ and $s_{q+1}\in V(P_1)\cup V(P_2)$, say $s_{q+1}\in V(P_1)$. Let $P_1=s_{q+1}s_i$, where $i\in I\setminus\{q+1\}$. Then $c_{q+1}\in V(P_2)$. But $s_{q+1}s_i,c_{q+1}c_i\in E(J)$ with the same color, a contradiction.
Thus $A\neq\emptyset$. By the same argument, $B\neq\emptyset$. Also $|A\cap B|\le 1$; otherwise two repeated indices would give two center edges with the same color.
If there exists $i\in A\cap B$, then $|A\cup B|=q-1$ and $|I\setminus(A\cup B)|=2$. For each $a\in(A\cup B)\setminus\{i\}$, the color $\gamma_{ai}$ already appears on a center edge, so neither $s_a$ (when $a\in B$) nor $c_a$ (when $a\in A$) can lie in $V(J)\setminus V(Q)$.
Each $j\in I\setminus(A\cup B)$ contributes at most one of $s_j,c_j$; otherwise  $s_js_i,c_jc_i\in E(J)$ with the color $\gamma_{ji}$.
Hence at most two usable petal endpoints remain in $V(S)\cup V(C)$, a contradiction. Therefore $A\cap B=\emptyset$, so $|A|+|B|=q$ and $I\setminus(A\cup B)=\{\ell\}$ for a unique $\ell$.

The endpoints of every petal of $J$ therefore are in $S_B\cup C_A\cup\{s_\ell,c_\ell\}$.
If both vertices of a petal edge lies in $S_B$ or $C_A$, its color would coincident with the corresponding edge used in $C_B$ or $S_A$, which is impossible. Cross-edges $s_xc_y$ ($x\in B$, $y\in A$) are unusable, since the joins $s_xs_y$ and $c_yc_x$ would both receive color $\gamma_{xy}$.
Thus both petals meet $\{s_\ell,c_\ell\}$, and there are only two types of disjoint pairs. If the disjoint pairs are $\{s_\ell s_x,\ c_\ell c_y\} \text{ and } \{s_\ell c_y,\ c_\ell s_x\}$
($x\in B$, $y\in A$), they force the repeated colors $\gamma_{\ell x}$ and $\gamma_{xy}$ respectively, a contradiction.
If the disjoint pairs are $\{s_\ell s_x,\ c_\ell s_y\} \text{ and } \{s_\ell c_y,\ c_\ell c_x\}$ ($x,y\in B$ or $x,y\in A$ respectively), they force repeated colors between a connection edge ($c_\ell c_x$ or $s_ell s_x$) and a petal edge ($s_ell s_x$ or $c_\ell c_x$), a contradiction.

\textit{Saturation.}
Fix a nonedge $e$ of $G_2$ and a color $\alpha$. Assume  $e\in\{r_1r_2,r_1s_1\}$, where $r_1,r_2\in R$. For  $1\le i\neq j\le q+1$, set
\begin{align*}
	\Phi_{ij}&=(r_1r_2\cup s_is_j)\vee(C[V(C)\setminus\{c_j\}]),\\
	\Omega_{ij}&=(r_1s_1\cup s_ic_j)\vee(C[V(C)\setminus\{c_j\}])~~\mbox{where}~~i\not=1,\\
	\Psi_{ij}&=(r_1s_1\cup s_is_j)\vee(C[V(C)\setminus\{c_j\}])~~\mbox{where}~~1\notin\{i,j\}.
\end{align*}
Each $\Phi_{ij}$ (respectively $\Omega_{ij}$ and $\Psi_{ij}$) is a copy of $2K_2\vee K_q$ in $G_2+ e$ containing $e$ when $e=r_1r_2$ (respectively when $e=r_1s_1$). If $\alpha$ does not appear in $G_2$, every such copy is a rainbow copy of $2K_2\vee K_q$ in $G_2+_\alpha e$. Thus it remains to treat $\alpha$ already used on $G_2$. Note that $|I|\ge4$. If $e=r_1r_2$, choose $\Phi_{ij}$ as follows: $j=a$, $i\notin\{a,b\}$ when $\alpha=\gamma_{ab}$; $i,j\in I\setminus\{a\}$ when $\alpha=\mathcal{C}(s_ac_b)$; $j=b$ when $\alpha=\mathcal{C}(r_hc_b)$ ($r_h\in R$). If $e=r_1s_1$, choose $\Psi_{ij}$ by $j\in\{a,b\}\setminus\{1\}$, $i\notin\{1,a,b\}$ when $\alpha=\gamma_{ab}$; $i,j\in I\setminus\{1,a\}$ when $\alpha=\mathcal{C}(s_ac_b)$ with $a\neq1$; $j=b$, $i\notin\{1,b\}$ when $a=1$ or $\alpha=\mathcal{C}(r_1c_b)$ with $b\ne 1$, and choose arbitrarily when $\alpha=\mathcal{C}(r_hc_b)$ with $h\ne 1$. Finally, if $\alpha\in\{\mathcal{C}(s_1c_1),\mathcal{C}(r_1c_1)\}$, use $\Omega_{21}$. In each case, there is a rainbow copy of $2K_2\vee K_q$ in $G_2+_\alpha e$.
\qed
\vspace{0.3em}

\newcommand{\casefigqgeqthree}{%
\begin{figure}[!ht]
\centering
\tikzset{
	casebox/.style={draw,dashed,rounded corners=5pt,line width=0.8pt},
	caseblock/.style={draw,rounded corners=4pt,minimum width=1.45cm,minimum height=0.62cm,font=\small,align=center,line width=0.9pt},
	casevertex/.style={circle,draw,fill=white,inner sep=0.8pt,minimum size=5.2mm,font=\scriptsize,line width=0.85pt},
	casejoin/.style={line width=1.2pt}
}
\begin{tikzpicture}[scale=0.78]
	\draw[casebox,fill=orange!5] (-5.55,-0.65) rectangle (-1.35,2.10);
		\node[font=\small] at (-3.45,2.42) {$S\cong K_{q+1}$};
		\node[casevertex,fill=orange!8] (s1) at (-4.60,1.35) {$s_1$};
	\node[casevertex,fill=orange!8] (s2) at (-2.30,1.35) {$s_2$};
	\node[casevertex,fill=orange!8] (sd) at (-4.60,0.45) {$\cdots$};
	\node[casevertex,fill=orange!8] (sq) at (-2.30,0.45) {$s_{q+1}$};
	\draw[line width=0.75pt,draw=red!75!black] (s1)--(s2);
	\draw[line width=0.75pt,draw=violet!80!black] (s1)--(sd);
	\draw[line width=0.75pt,draw=blue!75!black] (s1)--(sq);
	\draw[line width=0.75pt,draw=teal!70!black] (s2)--(sd);
	\draw[line width=0.75pt,draw=green!50!black] (s2)--(sq);
	\draw[line width=0.75pt,draw=yellow!55!black] (sd)--(sq);
	\node[caseblock,fill=yellow!16,minimum width=2.25cm] (Bt) at (-3.45,-1.33) {$B_t$};
	\node[font=\scriptsize] at (-3.45,-2.1) {added for $t\geq3$};

	\draw[casebox,fill=blue!5] (0.15,-1.70) rectangle (2.60,1.70);
	\node[font=\small] at (1.38,2.02) {$C\cong K_{q+1}$};
	\node[casevertex,fill=blue!7] (c1) at (1.38,0.82) {$c_1$};
	\node[casevertex,fill=blue!7] (c2) at (0.72,0.10) {$c_2$};
	\node[casevertex,fill=blue!7] (cd) at (2.04,0.10) {$\cdots$};
	\node[casevertex,fill=blue!7] (cq) at (1.38,-0.76) {$c_{q+1}$};
	\draw[line width=0.75pt,draw=red!75!black] (c1)--(c2);
	\draw[line width=0.75pt,draw=violet!80!black] (c1)--(cd);
	\draw[line width=0.75pt,draw=blue!75!black] (c1)--(cq);
	\draw[line width=0.75pt,draw=teal!70!black] (c2)--(cd);
	\draw[line width=0.75pt, draw=green!50!black] (c2)--(cq);
	\draw[line width=0.75pt,draw=yellow!55!black] (cd)--(cq);

	\draw[casebox,fill=green!5] (4.50,-1.70) rectangle (7.50,1.70);
	\node[font=\small] at (6.00,2.38) {$R$};
	\node[font=\scriptsize] at (6.00,1.98) {independent roots};
	\node[casevertex,fill=green!7] (r1) at (5.20,0.45) {$r_1$};
	\node[casevertex,fill=green!7] (r2) at (6.80,0.45) {$r_2$};
	\node[casevertex,fill=green!7] (rd) at (5.20,-0.65) {$\cdots$};
	\node[casevertex,fill=green!7] (rn) at (6.80,-0.65) {$r$};

	\draw[casejoin,draw=brown!85!black] (-1.35,1.00)--(0.15,1.00);
	\draw[casejoin,draw=lime!65!black] (-1.35,0.45)--(0.15,0.45);
	\draw[casejoin,draw=pink!85!black] (-1.35,-0.10)--(0.15,-0.10);
	\node[font=\small,fill=white,inner sep=1pt] at (-0.60,1.32) {$S\vee C$};
	\draw[casejoin,draw=magenta!70!black] ([yshift=3pt]Bt.east)--(0.15,-1.20);
	\draw[casejoin,draw=cyan!65!black] ([yshift=-5pt]Bt.east)--(0.15,-1.50);
	\node[font=\scriptsize,fill=white,inner sep=1pt] at (-0.85,-0.92) {$B_t\vee C$};

	\draw[casejoin,draw=orange!80!black] (2.60,0.90)--(4.50,0.90);
	\draw[casejoin,draw=violet!80!black] (2.60,0.30)--(4.50,0.30);
	\draw[casejoin,draw=yellow!55!black] (2.60,-0.30)--(4.50,-0.30);
	\draw[casejoin,draw=teal!70!black] (2.60,-0.90)--(4.50,-0.90);
	\node[font=\small,fill=white,inner sep=1pt] at (3.55,1.24) {$C\vee R$};

\end{tikzpicture}
\caption{The graph $H_t$ obtained from $G_2$ by adjoining $B_t$ and the complete join $B_t\vee C$. There are no edges between any two of the sets $S$, $V(B_t)$, and $R$.}
\label{fig:Gt2}
\end{figure}
}

\medskip\noindent\textbf{The $p=2$ and $q=2$ case.}

Let $C\cong K_3$ with vertex set $\{c_1,c_2,c_3\}$, $S\cong K_6$ with vertex $\{s_1,\ldots,s_6\}$, and let $R$ be the remaining vertices which is an independent set (assume $R\neq\emptyset$).
Let  $G_3=C\vee (S\cup R)$ and
color the edges of $G_3$ so that
\[
\begin{aligned}
	\gamma_{ij}&:=\mathcal{C}(c_ic_j)=\mathcal{C}(s_is_j)=\mathcal{C}(s_{i+3}s_{j+3})=\mathcal{C}(s_ks_{k+3}),
	&& \quad \text{for }\{i,j,k\}=\{1,2,3\},\\
	\delta_{ij}&:=\mathcal{C}(s_ic_j)=\mathcal{C}(c_is_{j+3})=\mathcal{C}(s_is_{j+3}),
	&&\quad\text{for }1\le i\neq j\le 3.
\end{aligned}
\]
(where $\gamma_{ij}=\gamma_{ji}$) and give every other edge a fresh and distinct color  (see Figure 4).

\begin{figure}[!ht]
	\centering
	\tikzset{
		casebox/.style={draw,dashed,rounded corners=5pt,line width=0.8pt},
		caseblock/.style={draw,rounded corners=4pt,minimum width=1.45cm,minimum height=0.62cm,font=\small,align=center,line width=0.9pt},
		casevertex/.style={circle,draw,fill=white,inner sep=0.8pt,minimum size=5.2mm,font=\scriptsize,line width=0.85pt},
		casejoin/.style={line width=1.2pt}
	}
	\definecolor{gammathirteencolor}{RGB}{90,110,145}
	\begin{tikzpicture}[scale=0.76]
		\draw[casebox,fill=orange!5] (-5.75,-1.15) rectangle (-1.15,2.30);
		\node[font=\small] at (-3.45,2.62) {$S\cong K_6$};
		\coordinate (sone) at (-4.85,1.20);
		\coordinate (stwo) at (-3.45,1.60);
		\coordinate (sthree) at (-2.05,1.20);
		\coordinate (sfour) at (-4.85,-0.10);
		\coordinate (sfive) at (-3.45,-0.50);
		\coordinate (ssix) at (-2.05,-0.10);
		\draw[line width=0.55pt,draw=red!75!black] (sone)--(stwo);
		\draw[line width=0.55pt,draw=red!75!black] (sfour)--(sfive);
		\draw[line width=0.55pt,draw=red!75!black] (sthree)--(ssix);
		\draw[line width=0.55pt,draw=blue!75!black] (stwo)--(sthree);
		\draw[line width=0.55pt,draw=blue!75!black] (sfive)--(ssix);
		\draw[line width=0.55pt,draw=blue!75!black] (sone)--(sfour);
		\draw[line width=0.55pt,draw=gammathirteencolor] (sone)--(sthree);
		\draw[line width=0.55pt,draw=gammathirteencolor] (sfour)--(ssix);
		\draw[line width=0.55pt,draw=gammathirteencolor] (stwo)--(sfive);
		\draw[line width=0.55pt,draw=brown!85!black] (sone)--(sfive);
		\draw[line width=0.55pt,draw=teal!70!black] (sone)--(ssix);
		\draw[line width=0.55pt,draw=black!85] (stwo)--(sfour);
		\draw[line width=0.55pt,draw=lime!65!black] (stwo)--(ssix);
		\draw[line width=0.55pt,draw=pink!85!black] (sthree)--(sfour);
		\draw[line width=0.55pt,draw=brown!45!yellow!100] (sthree)--(sfive);
		\node[casevertex,fill=orange!8] at (sone) {$s_1$};
		\node[casevertex,fill=orange!8] at (stwo) {$s_2$};
		\node[casevertex,fill=orange!8] at (sthree) {$s_3$};
		\node[casevertex,fill=orange!8] at (sfour) {$s_4$};
		\node[casevertex,fill=orange!8] at (sfive) {$s_5$};
		\node[casevertex,fill=orange!8] at (ssix) {$s_6$};
	
		\draw[casebox,fill=blue!5] (0.20,-1.35) rectangle (2.55,1.55);
		\node[font=\small] at (1.38,1.86) {$C\cong K_3$};
		\node[casevertex,fill=blue!7] (c1) at (1.38,0.82) {$c_1$};
		\node[casevertex,fill=blue!7] (c2) at (0.76,-0.28) {$c_2$};
		\node[casevertex,fill=blue!7] (c3) at (2.00,-0.28) {$c_3$};
		\draw[line width=0.75pt,draw=red!75!black] (c1)--(c2);
		\draw[line width=0.75pt,draw=gammathirteencolor] (c1)--(c3);
		\draw[line width=0.75pt,draw=blue!75!black] (c2)--(c3);
	
		\draw[casebox,fill=green!5] (4.50,-1.70) rectangle (7.50,1.70);
		\node[font=\small] at (6.00,2.38) {$R$};
		\node[font=\scriptsize] at (6.00,1.98) {independent roots};
		\node[casevertex,fill=green!7] (r1) at (5.20,0.45) {$r_1$};
		\node[casevertex,fill=green!7] (r2) at (6.80,0.45) {$r_2$};
		\node[casevertex,fill=green!7] (rd) at (5.20,-0.65) {$\cdots$};
	
		\draw[casejoin,draw=brown!85!black] (-1.15,0.80)--(0.20,0.80);
		\draw[casejoin,draw=lime!65!black] (-1.15,0.25)--(0.20,0.25);
		\draw[casejoin,draw=pink!85!black] (-1.15,-0.30)--(0.20,-0.30);
		\node[font=\small,fill=white,inner sep=1pt] at (-0.48,1.12) {$S\vee C$};

		\draw[casejoin,draw=orange!80!black] (2.55,0.80)--(4.50,0.80);
		\draw[casejoin,draw=violet!80!black] (2.55,0.20)--(4.50,0.20);
		\draw[casejoin,draw=yellow!55!black] (2.55,-0.40)--(4.50,-0.40);
		\node[font=\small,fill=white,inner sep=1pt] at (3.53,1.12) {$C\vee R$};
	
	\end{tikzpicture}
	\caption{The  construction of $G_3$. Equal colors in $S$ and $C$ represent the classes $\gamma_{12}$, $\gamma_{13}$, and $\gamma_{23}$. The three displayed $S$--$C$ lines have colors $\delta_{12}$, $\delta_{23}$, and $\delta_{31}$. There are no edges between $S$ and $R$.}
	\label{fig:case3}
\end{figure}

\begin{prop}\label{P521}
	 $G_3$  is $F_{2,2,2}$-rainbow saturated.
\end{prop}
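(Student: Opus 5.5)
The proof follows the template of Proposition~\ref{P520}: first show that $G_3$ is $F_{2,2,2}$-rainbow free, then check saturation by exhibiting explicit rainbow copies. Throughout, $F_{2,2,2}=2K_2\vee K_2$. Its center $Q$ is an edge $xy$, and its two petals are disjoint edges $u_1w_1,u_2w_2$, with all four petal vertices joined to both $x$ and $y$. So $|V(J)|=6$ and $e(J)=2+1+8=11$. Both center vertices have degree $5$ in $J$.

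\emph{Rainbow-freeness.} Suppose $J$ is a rainbow copy in $G_3$. Every $r\in R$ has degree $3$, so $V(Q)\cap R=\emptyset$. If some petal vertex lies in $R$, its petal partner and both center vertices lie in $N(r)=V(C)$, which uses all three vertices of $C$. The other petal is then an edge of $S$, and that edge is joined to all of $C$ except one vertex, which is impossible. Hence $V(J)\subseteq V(S)\cup V(C)$, a set of $9$ vertices. I would then split into cases by $|V(J)\cap V(C)|\in\{0,1,2,3\}$ and by the location of $Q$: inside $C$, inside $S$, or an $S$--$C$ edge.

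The coloring is designed so that every color class $\gamma_{ij}$ contains a perfect matching of $\{s_1,\dots,s_6\}$ plus the edge $c_ic_j$, namely $s_is_j,\,s_{i+3}s_{j+3},\,s_ks_{k+3}$. Consequently any $K_4$ or $2K_2$-rich structure on $S$ repeats a $\gamma$ color. Similarly $\delta_{ij}$ ties $s_ic_j$, $c_is_{j+3}$ and $s_is_{j+3}$ together. In each case I would show that the $11$ edges of $J$ must contain two edges from one class $\gamma_{ij}$ or $\delta_{ij}$. The key counting fact is that $J$ contains $K_4$'s (center plus each petal) and that $J-\{x,y\}$ contains $2K_2$. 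When $V(J)\subseteq S$, $J$ is a $6$-vertex graph with $11$ edges inside $K_6$, while $S$ has only $15$ edges in $3+3+6=12$ color classes. The $\gamma$-classes restricted to $S$ are three perfect matchings, and $J$ misses only $4$ edges, namely the complement $K_4$ minus the center edge, i.e.\ the missing edges $u_1u_2,u_1w_2,w_1u_2,w_1w_2$. Two missing edges cannot kill all but one edge of each of the three $3$-edge matchings, so some $\gamma$ color repeats. The mixed cases use the $\delta$ identifications in the same manner.

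\emph{Saturation.} The nonedges are $rr'$, $rs_i$ (with $r,r'\in R$) and nothing else, since $C\vee(S\cup R)$ and $S$, $C$ are complete. For $e=r_1r_2$ I would use copies of the form $(r_1r_2\cup s_as_b)\vee c_ic_j$. For $e=r_1s_a$ I would use $(r_1s_a\cup s_bs_{b'})\vee c_ic_j$ or $(r_1s_a\cup s_bc_l)\vee c_ic_j$. The work is to choose the indices so that the copy is rainbow, which requires the edges $c_ic_j$, the petal edge and its joins to avoid repeated $\gamma/\delta$ colors, and also avoids the prescribed color $\alpha$. By the symmetry $i\mapsto i+1$ together with the swap $s_i\leftrightarrow s_{i+3}$, it suffices to treat $a=1$ and to case on whether $\alpha$ is fresh, some $\gamma_{ij}$, some $\delta_{ij}$, or a fresh color on an edge at $r_1$ or at $s_1$. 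I expect the main obstacle to be the rainbow-freeness case analysis with $Q$ an $S$--$C$ edge or with $J$ split across $S$ and $C$. There I would first enumerate the possible triples $|V(J)\cap V(C)|$ and use that any two disjoint edges of $S$ inside a $K_4$ of $J$ cannot share a $\gamma$ matching, which prunes most configurations.
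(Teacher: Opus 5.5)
Your case $V(J)\subseteq V(S)$ is sound once stated correctly. The three $\gamma$-classes restrict to three edge-disjoint perfect matchings of $G_3[S]\cong K_6$, so a rainbow $J$ would have to omit at least $6$ edges of $G_3[S]$, whereas it omits only the four edges of the $4$-cycle on its petal vertices. (Also, the $15$ edges of $S$ lie in $9$ color classes, not $12$.) But this is the only case you actually prove. Everything else is deferred to ``the $\delta$ identifications in the same manner,'' and counting alone cannot finish the job. For $X=\{c_1,c_2,s_2,s_3,s_5,s_6\}$, the $15$ edges of $G_3[X]$ fall into exactly $11$ color classes, so ``$J$ omits only four edges'' is exactly tight. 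Ruling out $X$ needs the shape of the omitted set: one edge must be dropped from each of $\{c_1c_2,s_3s_6\}$, $\{s_2s_3,s_5s_6\}$, $\{c_2s_6,s_2s_6\}$, $\{c_2s_3,s_3s_5\}$, and no such choice is a $4$-cycle. Your pruning heuristic is also false as stated: $G_3[\{s_2,s_3,s_4,s_6\}]$ is a rainbow $K_4$. In the $R$-petal case, ``impossible'' needs a reason. There the center is $c_ic_j$ and the partner of $r$ is $c_k$, so all three $\gamma$ colors lie on the triangle $C$. The other petal is an edge of $S$, hence a $\delta$-edge $s_as_{b+3}$. It shares its color with $s_ac_b$ or $c_as_{b+3}$, and since $a\neq b$ at least one of these is a join to the center.

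The missing ingredient is a structural organization of the cases, and the paper's is to fix the center edge. For $Q=c_1c_2$, the observation just made shows $c_3\notin V(J)$. Then the petals must be one $\gamma_{13}$-edge and one $\gamma_{23}$-edge of $S$. Every disjoint such pair contains $\{s_1,s_5\}$ or $\{s_2,s_4\}$, whose joins to the center repeat $\delta_{12}$, respectively $\delta_{21}$. Otherwise $V(J)\subseteq V(S)\cup V(C)$, and up to permuting indices and the swap $s_i\leftrightarrow s_{i+3}$ (which exchanges $\delta_{ij}$ and $\delta_{ji}$), the center is $c_1s_1$, $s_1s_4$, $c_1s_2$, $s_1s_2$ or $s_1s_5$. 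Record the two join colors of each remaining vertex. For $c_1s_1$, $s_1s_4$, $s_1s_5$, fewer than four compatible petal vertices survive. For $c_1s_2$ and $s_1s_2$, exactly two $4$-sets survive, and each of their three perfect matchings repeats a color.

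For saturation you have only named the shapes of the copies. These shapes do suffice, but the choices are tight: for $e=rs_1$ and $\alpha=\gamma_{23}$, the only valid copies are $(rs_1\cup s_4s_6)\vee c_1c_2$ and $(rs_1\cup s_4s_5)\vee c_1c_3$. The paper shortens this check by noting that $G_3-\{s_4,s_5,s_6\}$ and $G_3-\{s_1,s_2,s_3\}$ are copies of the $q=2$ version of $G_2$. It reuses $\Phi_{ij},\Psi_{ij},\Omega_{ij}$ and treats $e=rs_i$, $\alpha=\gamma_{jk}$ as the single exceptional case.
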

\noindent {\bf Proof.} Write
\[
	M_{ij}=\{s_is_j,\ s_{i+3}s_{j+3},\ s_ks_{k+3}\}\qquad(\{i,j,k\}=\{1,2,3\}).
\]
Then $M_{ij}=M_{ji}$ and each $M_{ij}$ is the monochromatic $\gamma_{ij}$-matching.

\textit{Rainbow-freeness.}
Suppose $J$ is a rainbow copy of $F_{2,2,2}=2K_2\vee K_2$  with center $Q$ and  petals $P_1,P_2$. Then $V(Q)\cap R=\emptyset$ which implies $(V(P_1)\cup V(P_2))\cap V(S)\not=\emptyset$. Assume $V(P_1)\cap V(S)\not=\emptyset$.

Suppose first that $V(Q)\subseteq V(C)$, say $Q=c_1c_2$. We can assume $P_1\in E(S)$. Since $J$ is rainbow, $E(J)\cap M_{12}=\emptyset$. Denote
$$E'=\{s_1s_5,s_1s_6,s_2s_4,s_2s_6,s_3s_4,s_3s_5\}.$$
By the coloring of $\delta_{ij}$ with $1\le i\not=j\le 3$, we have $P_1,P_2\notin E'$.
Consider first $c_3\in V(J)$. Then  $\gamma_{12}$, $\gamma_{13}$ and $\gamma_{23}$ all appear in $J$ which implies $E(J)\cap (M_{13}\cup M_{23})=\emptyset$. So we have $P_1\in E'$, a contradiction. Assume $c_3\notin V(J)$. Then $V(P_1)\cup V(P_2)\subseteq V(S)$. If $s_1\in V(J)$ (resp. $s_2\in V(J)$), by the $\delta_{12}$ (resp. $\delta_{21}$), we have $s_5\notin V(J)$ (resp. $s_4\not\in V(J)$). Since $P_1,P_2\notin E'$ and $E(J)\cap M_{12}=\emptyset$, we have $|V(J)\cap \{s_1,s_2\}|\ge 1$. If $|V(J)\cap \{s_1,s_2\}|= 2$, then $V(P_1)\cup V(P_2)=\{s_1,s_2,s_3,s_6\}$. But $s_1s_2,s_1s_6,s_2s_6\notin E(J)$, a contradiction. If $s_1\in V(J)$ (resp. $s_2\in V(J)$), then $V(P_1)\cup V(P_2)=\{s_1,s_3,s_4,s_6\}$ (resp. $V(P_1)\cup V(P_2)=\{s_2,s_3,s_5,s_6\}$). Since $P_1,P_2\notin E'$ and $E(J)\cap M_{12}=\emptyset$, we have $s_1s_3,s_4s_6\in E(J)$ (if $s_1\in V(J)$) or $s_2s_3,s_5s_6\in E(J)$ (if $s_2\in V(J)$). But
 $\mathcal{C}(s_1s_3)=\mathcal{C}(s_4s_6)$ and $\mathcal{C}(s_2s_3)=\mathcal{C}(s_5s_6)$, a contradiction.



Now suppose $V(Q)\not\subseteq V(C)$. Then $V(J)\subseteq V(S)\cup V(C)$. Up to permuting indices and the involution $s_i\leftrightarrow s_{i+3}$, the center is one of the following five types: \(
c_1s_1,s_1s_4, c_1s_2,s_1s_2, s_1s_5.
\)
In the first two types fewer than four compatible petal endpoints remain; in the last three types every disjoint petal pair repeats a color $\gamma_{ab}$ or $\delta_{ab}$, a contradiction.

\textit{Saturation.}
Let $H_1=G_3-\{s_4,s_5,s_6\}$ and $H_2=G_3-\{s_1,s_2,s_3\}$. Then $H_1$ and $H_2$ are isomorphic to the graph constructed consistently as $G_2$ with $q=2$. So the families $\Phi_{ij}$, $\Psi_{ij}$ and $\Omega_{ij}$ with $1\le i\not=j\le 3$ defined in the proof of Proposition~\ref{P520} give color-avoiding saturating copies except when, up to symmetry, $e=rs_i$ with $i\in\{1,2,3\}$ and $\alpha=\gamma_{jk}$, where $j,k\in \{1,2,3\}\setminus\{i\}$ and $j\not=k$. Every nonedge of $G_3$ lies in $H_1$ or $H_2$, and the remaining exceptional case is handled by
\(
	(s_1r\cup s_4s_5)\vee(c_1c_3),
\)
whose edges in $G_3$ avoid $\gamma_{23}$. \qed
\vspace{0.3em}

\newcommand{\casefigqtwo}{%
\begin{figure}[!ht]
\centering
\tikzset{
	casebox/.style={draw,dashed,rounded corners=5pt,line width=0.8pt},
	caseblock/.style={draw,rounded corners=4pt,minimum width=1.45cm,minimum height=0.62cm,font=\small,align=center,line width=0.9pt},
	casevertex/.style={circle,draw,fill=white,inner sep=0.8pt,minimum size=5.2mm,font=\scriptsize,line width=0.85pt},
	casejoin/.style={line width=1.2pt}
}
\definecolor{gammathirteencolor}{RGB}{90,110,145}
\begin{tikzpicture}[scale=0.76]
	\draw[casebox,fill=orange!5] (-5.75,-1.15) rectangle (-1.15,2.30);
	\node[font=\small] at (-3.45,2.62) {$S\cong K_6$};
	\coordinate (sone) at (-4.85,1.20);
	\coordinate (stwo) at (-3.45,1.60);
	\coordinate (sthree) at (-2.05,1.20);
	\coordinate (sfour) at (-4.85,-0.10);
	\coordinate (sfive) at (-3.45,-0.50);
	\coordinate (ssix) at (-2.05,-0.10);
	\draw[line width=0.55pt,draw=red!75!black] (sone)--(stwo);
	\draw[line width=0.55pt,draw=red!75!black] (sfour)--(sfive);
	\draw[line width=0.55pt,draw=red!75!black] (sthree)--(ssix);
	\draw[line width=0.55pt,draw=blue!75!black] (stwo)--(sthree);
	\draw[line width=0.55pt,draw=blue!75!black] (sfive)--(ssix);
	\draw[line width=0.55pt,draw=blue!75!black] (sone)--(sfour);
	\draw[line width=0.55pt,draw=gammathirteencolor] (sone)--(sthree);
	\draw[line width=0.55pt,draw=gammathirteencolor] (sfour)--(ssix);
	\draw[line width=0.55pt,draw=gammathirteencolor] (stwo)--(sfive);
	\draw[line width=0.55pt,draw=brown!85!black] (sone)--(sfive);
	\draw[line width=0.55pt,draw=teal!70!black] (sone)--(ssix);
	\draw[line width=0.55pt,draw=black!85] (stwo)--(sfour);
	\draw[line width=0.55pt,draw=lime!65!black] (stwo)--(ssix);
	\draw[line width=0.55pt,draw=pink!85!black] (sthree)--(sfour);
	\draw[line width=0.55pt,draw=brown!45!yellow!100] (sthree)--(sfive);
	\node[casevertex,fill=orange!8] at (sone) {$s_1$};
	\node[casevertex,fill=orange!8] at (stwo) {$s_2$};
	\node[casevertex,fill=orange!8] at (sthree) {$s_3$};
	\node[casevertex,fill=orange!8] at (sfour) {$s_4$};
	\node[casevertex,fill=orange!8] at (sfive) {$s_5$};
	\node[casevertex,fill=orange!8] at (ssix) {$s_6$};
	\node[caseblock,fill=yellow!16,minimum width=2.25cm] (Bt) at (-3.45,-1.7) {$B_t$};
	\node[font=\scriptsize] at (-3.45,-2.43) {added for $t\geq3$};

	\draw[casebox,fill=blue!5] (0.20,-1.35) rectangle (2.55,1.55);
	\node[font=\small] at (1.38,1.86) {$C\cong K_3$};
	\node[casevertex,fill=blue!7] (c1) at (1.38,0.82) {$c_1$};
	\node[casevertex,fill=blue!7] (c2) at (0.76,-0.28) {$c_2$};
	\node[casevertex,fill=blue!7] (c3) at (2.00,-0.28) {$c_3$};
	\draw[line width=0.75pt,draw=red!75!black] (c1)--(c2);
	\draw[line width=0.75pt,draw=gammathirteencolor] (c1)--(c3);
	\draw[line width=0.75pt,draw=blue!75!black] (c2)--(c3);

	\draw[casebox,fill=green!5] (4.50,-1.70) rectangle (7.50,1.70);
	\node[font=\small] at (6.00,2.38) {$R$};
	\node[font=\scriptsize] at (6.00,1.98) {independent roots};
	\node[casevertex,fill=green!7] (r1) at (5.20,0.45) {$r_1$};
	\node[casevertex,fill=green!7] (r2) at (6.80,0.45) {$r_2$};
	\node[casevertex,fill=green!7] (rd) at (5.20,-0.65) {$\cdots$};
	\node[casevertex,fill=green!7] (rn) at (6.80,-0.65) {$r$};

	\draw[casejoin,draw=brown!85!black] (-1.15,0.80)--(0.20,0.80);
	\draw[casejoin,draw=lime!65!black] (-1.15,0.25)--(0.20,0.25);
	\draw[casejoin,draw=pink!85!black] (-1.15,-0.30)--(0.20,-0.30);
	\node[font=\small,fill=white,inner sep=1pt] at (-0.48,1.12) {$S\vee C$};
	\draw[casejoin,draw=magenta!70!black] ([yshift=4pt]Bt.east)--(0.20,-0.7);
	\draw[casejoin,draw=cyan!65!black] ([yshift=-7pt]Bt.east)--(0.20,-1.1);
	\node[font=\scriptsize,fill=white,inner sep=1pt] at (-0.48,-1.93) {$B_t\vee C$};

	\draw[casejoin,draw=orange!80!black] (2.55,0.80)--(4.50,0.80);
	\draw[casejoin,draw=violet!80!black] (2.55,0.20)--(4.50,0.20);
	\draw[casejoin,draw=yellow!55!black] (2.55,-0.40)--(4.50,-0.40);
	\node[font=\small,fill=white,inner sep=1pt] at (3.53,1.12) {$C\vee R$};

\end{tikzpicture}
\caption{The graph $H_t$ obtained from $G_3$ by adjoining $B_t$ and the complete join $B_t\vee C$. There are no edges between any two of the sets $S$, $V(B_t)$, and $R$.}
\label{fig:Gt3}
\end{figure}
}

\vspace{0.5em}
\medskip\noindent\textbf{The $p=2$ and $q=1$ case.}

Let $S\cong K_4$ with vertex set $\{s_1,s_2,s_3,s_4\}$,
$C=\{c_1,c_2\}$ be an independent set, and  $R$ be the remaining vertices (assume $R\neq\emptyset$) which is also an independent set. Let  $G_4=C\vee (S\cup R)$.
Color the edges of $S$ by the three monochromatic matchings
\[
	M_1=\{s_1s_2,s_3s_4\},\qquad
	M_2=\{s_1s_3,s_2s_4\},\qquad
	M_3=\{s_1s_4,s_2s_3\},
\]
and identify the join colors by $\mathcal{C}(s_ic_j)=\mathcal{C}(s_{5-i}c_{3-j})$ for all $i\in\{1,2,3,4\}$ and $j\in\{1,2\}$. All other edges receive pairwise distinct fresh colors (see Figure 5).

\begin{figure}[!ht]
	\centering
	\tikzset{
		casebox/.style={draw,dashed,rounded corners=5pt,line width=0.8pt},
		caseblock/.style={draw,rounded corners=4pt,minimum width=1.45cm,minimum height=0.62cm,font=\small,align=center,line width=0.9pt},
		casevertex/.style={circle,draw,fill=white,inner sep=0.8pt,minimum size=5.2mm,font=\scriptsize,line width=0.85pt},
		casejoin/.style={line width=1.2pt}
	}
	\begin{tikzpicture}[scale=0.78]
		\draw[casebox,fill=orange!5] (-5.50,-0.72) rectangle (-1.40,2.06);
		\node[font=\small] at (-3.45,2.38) {$S\cong K_4$};
		\node[casevertex,fill=orange!8] (s1) at (-4.45,1.13) {$s_1$};
		\node[casevertex,fill=orange!8] (s2) at (-2.45,1.13) {$s_2$};
		\node[casevertex,fill=orange!8] (s3) at (-4.45,0.07) {$s_3$};
		\node[casevertex,fill=orange!8] (s4) at (-2.45,0.07) {$s_4$};
		\draw[line width=0.75pt,draw=red!75!black] (s1)--(s2);
		\draw[line width=0.75pt,draw=red!75!black] (s3)--(s4);
		\draw[line width=0.75pt,draw=blue!75!black] (s1)--(s3);
		\draw[line width=0.75pt,draw=blue!75!black] (s2)--(s4);
		\draw[line width=0.75pt,draw=green!50!black] (s1)--(s4);
		\draw[line width=0.75pt,draw=green!50!black] (s2)--(s3);

		\draw[casebox,fill=blue!5] (0.20,-1.35) rectangle (2.55,1.35);
		\node[font=\small] at (1.38,1.66) {$C=\{c_1,c_2\}$};
		\node[casevertex,fill=blue!7] (c1) at (0.85,0.18) {$c_1$};
		\node[casevertex,fill=blue!7] (c2) at (1.90,0.18) {$c_2$};
		\node[font=\scriptsize] at (1.38,-0.62) {$c_1c_2\notin E(G_4)$};
	
		\draw[casebox,fill=green!5] (4.50,-1.70) rectangle (7.50,1.70);
		\node[font=\small] at (6.00,2.38) {$R$};
		\node[font=\scriptsize] at (6.00,1.98) {independent roots};
		\node[casevertex,fill=green!7] (r1) at (5.20,0.45) {$r_1$};
		\node[casevertex,fill=green!7] (r2) at (6.80,0.45) {$r_2$};
		\node[casevertex,fill=green!7] (rd) at (5.20,-0.65) {$\cdots$};
		\node[casevertex,fill=green!7] (rn) at (6.80,-0.65) {$r$};
	
		\draw[casejoin,draw=brown!85!black] (-1.40,0.80)--(0.20,0.80);
		\draw[casejoin,draw=lime!65!black] (-1.40,0.25)--(0.20,0.25);
		\draw[casejoin,draw=pink!85!black] (-1.40,-0.30)--(0.20,-0.30);
		\node[font=\small,fill=white,inner sep=1pt] at (-0.60,1.12) {$S\vee C$};

		\draw[casejoin,draw=orange!80!black] (2.55,0.70)--(4.50,0.70);
		\draw[casejoin,draw=violet!80!black] (2.55,0.10)--(4.50,0.10);
		\draw[casejoin,draw=yellow!55!black] (2.55,-0.50)--(4.50,-0.50);
		\node[font=\small,fill=white,inner sep=1pt] at (3.53,1.02) {$C\vee R$};
	
	\end{tikzpicture}
	\caption{The  construction of $G_4$. Equal colors in $S$ indicate the three monochromatic matchings. There are no edges between $S$ and $R$.}
	\label{fig:case4}
\end{figure}

\begin{prop}\label{P522}
	 $G_4$  is $F_{2,2,1}$-rainbow saturated.
\end{prop}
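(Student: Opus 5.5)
The plan is to determine exactly which copies of $F_{2,2,1}=2K_2\vee K_1$ (two triangles sharing one vertex) can occur in $G_4$, and to show that each of them repeats a colour. For saturation, for every nonedge we will write down a small family of explicit bowties and check that some member avoids any prescribed colour $\alpha$.

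\textbf{Rainbow-freeness.} Every vertex of $R$ has degree $2$, so it cannot be the center, whose degree in the target is $4$. A vertex of $R$ has neighbourhood $C$, which is independent, so $r\in R$ lies in no triangle at all. Hence any copy $J$ of $F_{2,2,1}$ lies in $G_4[V(S)\cup C]$, and its center is some $c_j$ or some $s_i$.

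If the center is $c_j$, the two petals are disjoint edges of $N(c_j)\cap(V(S)\cup C)=V(S)$. Two disjoint edges of $S\cong K_4$ form a perfect matching, and each such matching is one of $M_1,M_2,M_3$, hence monochromatic.

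If the center is $s_i$, by symmetry say $s_1$, the petals are disjoint edges of $G_4[\{s_2,s_3,s_4,c_1,c_2\}]$. This subgraph is the triangle $s_2s_3s_4$ together with the edges $c_js_k$. There are two possibilities.
\begin{enumerate}
\item One petal is $s_as_b\subseteq S$. If $s_k$ is the third vertex of $\{s_2,s_3,s_4\}$, then $s_1s_k$ and $s_as_b$ form one of the matchings $M_\ell$, so the colour repeats.
\item Both petals are of the form $c_1s_a$ and $c_2s_b$ with $a\ne b$ in $\{2,3,4\}$. The identification $\mathcal C(s_ic_j)=\mathcal C(s_{5-i}c_{3-j})$ gives a repeat in every subcase:
\begin{itemize}
\item if $a=4$, then $s_4c_1\sim s_1c_2$;
\item if $b=4$, then $s_4c_2\sim s_1c_1$;
\item if $\{a,b\}=\{2,3\}$, then $s_2c_1\sim s_3c_2$ or $s_3c_1\sim s_2c_2$.
\end{itemize}
\end{enumerate}
The other centers $s_i$ are handled by the same argument after relabelling.

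\textbf{Saturation.} The nonedges of $G_4$ are $c_1c_2$, $rr'$ and $rs_i$, with $r,r'\in R$. For each we take a triangle $T$ through the new edge and a second triangle $c_js_as_b$ sharing a vertex $c_j$ with $T$. The candidate copies are as follows.
\begin{itemize}
\item For $e=c_1c_2$: $(c_2r\cup s_as_b)\vee c_1$, and symmetrically with center $c_2$.
\item For $e=rr'$: $(rr'\cup s_as_b)\vee c_j$.
\item For $e=rs_i$: $(rs_i\cup s_as_b)\vee c_j$ with $a,b\ne i$.
\end{itemize}
Each candidate is rainbow as soon as the new edge's colour $\alpha$ is absent from it. The only possible repetition among $c_js_i,c_js_a,c_js_b$ would need different indices $j$, and the edges $c_jr$ are fresh. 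We then have free choices of $j\in\{1,2\}$, of $r$ where relevant, and of the pair $\{a,b\}$ (three choices in the first two cases, three inside $S-s_i$ in the last). The set of $G_4$-colours occurring in a candidate is small, so for each $\alpha$ we can pick parameters avoiding it:
\begin{itemize}
\item if $\alpha=\mathcal C(c_jx)$, switch to the other $j$, or to a different pair or different $r$;
\item if $\alpha$ is a matching colour $M_\ell$, choose $s_as_b\notin M_\ell$.
\end{itemize}

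The main obstacle I expect is this last bookkeeping step for $e=rs_i$. There $s_i$ is forced and $\alpha$ may be one of the identified join colours $\mathcal C(s_kc_j)$, which appear at two places. If needed, I would fall back on the alternative bowtie $(rc_j\cup s_as_b)\vee s_i$, centered at $s_i$ (legitimate since $c_j$ is adjacent to both $r$ and $s_i$), to cover the remaining colours.
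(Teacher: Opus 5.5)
Your approach matches the paper's: a case analysis on the center for rainbow-freeness, and explicit bowties through the new edge for saturation. Your rainbow-freeness argument is correct, and for a center $s_i$ it is more explicit than the paper's sketch. The relabelling you appeal to does exist. Every permutation of $S$ preserving the partition $\{\{s_1,s_4\},\{s_2,s_3\}\}$, with $c_1,c_2$ fixed, preserves all colour identifications; examples are $(s_1s_4)$ and $(s_1s_2)(s_3s_4)$, and such permutations act transitively on $S$.

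One saturation step fails as written. For $e=c_1c_2$, both of your candidates, $(c_2r\cup s_as_b)\vee c_1$ and $(c_1r\cup s_as_b)\vee c_2$, contain \emph{both} edges $c_1r$ and $c_2r$. So switching $j$ never avoids $\alpha=\mathcal C(c_1r)$ or $\alpha=\mathcal C(c_2r)$, and you must move to a different vertex of $R$. No such vertex exists when $|R|=1$, which the proposition allows, since it only assumes $R\neq\emptyset$. For $|R|\ge 2$ your family is fine.

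The fix is a bowtie through $c_1c_2$ that uses no vertex of $R$, as in the paper. The bowtie $(c_1c_2\cup s_as_b)\vee s_k$ is always rainbow in $G_4$: the triangle $s_ks_as_b$ meets each $M_\ell$ exactly once, and $s_kc_1$, $s_kc_2$ lie in different colour pairs. The bowtie $(c_2s_1\cup s_2s_3)\vee c_1$ also works.

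Conversely, the obstacle you anticipated for $e=rs_i$ does not arise, and no fallback is needed. A join colour $\alpha$ lies on exactly one edge $c_1s_{z_1}$ at $c_1$ and exactly one edge $c_2s_{z_2}$ at $c_2$, with $z_1\neq z_2$, so $z_j\neq i$ for some $j$. For that $j$, the bowtie $(rs_i\cup s_as_b)\vee c_j$ with $\{a,b\}=\{1,2,3,4\}\setminus\{i,z_j\}$ uses $c_j$-edges only to $s_i,s_a,s_b$, and hence avoids $\alpha$.
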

\noindent {\bf Proof.}
\textit{Rainbow-freeness.} Suppose $J$ is a rainbow copy of  $F_{2,2,1}=2K_2 \vee K_1$ with central $Q=\{v\}$ in $G_4$. Then $v\notin R$. If $v\in C$, then both petals of $J$ lie in $S$ and hence form some $M_i$ ($1\le i\le 3$), a contradiction with $J$ being rainbow. If the center is $s_i$, then either $E(J)$ contains some
 $M_j$ for $1\le j\le 3$, or  the two petals of $J$ and their connecting edges to $s_i$ repeats a paired join color $\mathcal{C}(s_ac_b)=\mathcal{C}(s_{5-a}c_{3-b})$ for some $1\le a\le 4$ and $1\le b\le 2$, a contradiction.

\textit{Saturation.}
Fix a nonedge $e$ of $G_4$ and a color $\alpha$. We can assume  $e\in\{c_1c_2,r_1r_2,r_1s_1\}$, where $r_1,r_2\in R$. The matchings $M_1,M_2,M_3$ and the paired colors between $S$ and $C$ supply a rainbow copy of  $(2K_2 \vee K_1)$ in $G_4+_\alpha e$ if $e\in\{r_1r_2,r_1s_1\}$. So we just consider the case $e=c_1c_2$. Representative choices for $e=c_1c_2$ are $(c_2s_1\cup s_2s_3)\vee c_1$ when $\alpha$ is the color of $M_1$ or $M_2$, and $(c_2r_1\cup s_1s_2)\vee c_1$ when $\alpha$ is the color of $M_3$; an $S$--$C$ color is avoided by $(c_1c_2\cup s_as_b)\vee s_k$ with $k$ from the other paired class, and a $C$--$R$ color  is avoided by switching the center vertex.  In each case, we have  a rainbow copy of  $(2K_2 \vee K_1)$ in $G_4+_\alpha e$.\qed

\newcommand{\casefigqone}{%
\begin{figure}[!ht]
\centering
\tikzset{
	casebox/.style={draw,dashed,rounded corners=5pt,line width=0.8pt},
	caseblock/.style={draw,rounded corners=4pt,minimum width=1.45cm,minimum height=0.62cm,font=\small,align=center,line width=0.9pt},
	casevertex/.style={circle,draw,fill=white,inner sep=0.8pt,minimum size=5.2mm,font=\scriptsize,line width=0.85pt},
	casejoin/.style={line width=1.2pt}
}
\begin{tikzpicture}[scale=0.78]
	\draw[casebox,fill=orange!5] (-5.50,-0.72) rectangle (-1.40,2.06);
	\node[font=\small] at (-3.45,2.38) {$S\cong K_4$};
	\node[casevertex,fill=orange!8] (s1) at (-4.45,1.13) {$s_1$};
	\node[casevertex,fill=orange!8] (s2) at (-2.45,1.13) {$s_2$};
	\node[casevertex,fill=orange!8] (s3) at (-4.45,0.07) {$s_3$};
	\node[casevertex,fill=orange!8] (s4) at (-2.45,0.07) {$s_4$};
	\draw[line width=0.75pt,draw=red!75!black] (s1)--(s2);
	\draw[line width=0.75pt,draw=red!75!black] (s3)--(s4);
	\draw[line width=0.75pt,draw=blue!75!black] (s1)--(s3);
	\draw[line width=0.75pt,draw=blue!75!black] (s2)--(s4);
	\draw[line width=0.75pt,draw=green!50!black] (s1)--(s4);
	\draw[line width=0.75pt,draw=green!50!black] (s2)--(s3);
	\node[caseblock,fill=yellow!16,minimum width=2.25cm] (Bt) at (-3.45,-1.25) {$B_t$};
	\node[font=\scriptsize] at (-3.45,-2.05) {added for $t\geq3$};

	\draw[casebox,fill=blue!5] (0.20,-1.35) rectangle (2.55,1.35);
	\node[font=\small] at (1.38,1.66) {$C=\{c_1,c_2\}$};
	\node[casevertex,fill=blue!7] (c1) at (0.85,0.18) {$c_1$};
	\node[casevertex,fill=blue!7] (c2) at (1.90,0.18) {$c_2$};
	\node[font=\scriptsize] at (1.38,-0.62) {$c_1c_2\notin E(H_t)$};

	\draw[casebox,fill=green!5] (4.50,-1.70) rectangle (7.50,1.70);
	\node[font=\small] at (6.00,2.38) {$R$};
	\node[font=\scriptsize] at (6.00,1.98) {independent roots};
	\node[casevertex,fill=green!7] (r1) at (5.20,0.45) {$r_1$};
	\node[casevertex,fill=green!7] (r2) at (6.80,0.45) {$r_2$};
	\node[casevertex,fill=green!7] (rd) at (5.20,-0.65) {$\cdots$};
	\node[casevertex,fill=green!7] (rn) at (6.80,-0.65) {$r$};

	\draw[casejoin,draw=brown!85!black] (-1.40,0.80)--(0.20,0.80);
	\draw[casejoin,draw=lime!65!black] (-1.40,0.25)--(0.20,0.25);
	\draw[casejoin,draw=pink!85!black] (-1.40,-0.30)--(0.20,-0.30);
	\node[font=\small,fill=white,inner sep=1pt] at (-0.60,1.12) {$S\vee C$};
	\draw[casejoin,draw=magenta!70!black] ([yshift=5pt]Bt.east)--(0.20,-0.68);
	\draw[casejoin,draw=cyan!65!black] ([yshift=-6pt]Bt.east)--(0.20,-1.08);
	\node[font=\scriptsize,fill=white,inner sep=1pt] at (-0.55,-1.87) {$B_t\vee C$};

	\draw[casejoin,draw=orange!80!black] (2.55,0.70)--(4.50,0.70);
	\draw[casejoin,draw=violet!80!black] (2.55,0.10)--(4.50,0.10);
	\draw[casejoin,draw=yellow!55!black] (2.55,-0.50)--(4.50,-0.50);
	\node[font=\small,fill=white,inner sep=1pt] at (3.53,1.02) {$C\vee R$};

\end{tikzpicture}
\caption{ $H_t$ obtained from $G_4$ by adjoining $B_t$ and the complete join $B_t\vee C$. There are no edges between any two of the sets $S$, $V(B_t)$, and $R$.}
\label{fig:Gt4}
\end{figure}
}

\subsection{Unified extension lemmas}

The three preceding constructions with $p=2$ are the base cases for two petals. The same bounded gadget extends each of them to an arbitrary number of petals.

Write $K_5^{\mathrm{rb}}$ for a rainbow-colored copy of $K_5$. Recall $A_2$ is an edge-colored copy of $K_4$ using two colors, where $\mathcal{C}(v_1v_2)=\mathcal{C}(v_1v_3)=\mathcal{C}(v_2v_4)=\mathcal{C}(v_3v_4)$ and $\mathcal{C}(v_2v_3)=\mathcal{C}(v_1v_4)$ (see Figure 1).
For $t\geq 3$, let $B_t$ denote the following bounded matching gadget; all its blocks use pairwise disjoint color sets:
\[
	B_t=\begin{cases}
		A_2, & t=3,\\
		\frac{t-2}{2}\,K_5^{\mathrm{rb}}, & t\geq 4 \text{ even},\\
		\frac{t-3}{2}\,K_5^{\mathrm{rb}}\cup A_2, & t\geq 5 \text{ odd}.
	\end{cases}
\]
In particular,
\[
	|V(B_t)|=\begin{cases}
		4, & t=3,\\
		\frac{5(t-2)}{2}, & t\geq 4 \text{ even},\\
		\frac{5t-7}{2}, & t\geq 5 \text{ odd}.
	\end{cases}
\]
\vspace{0.2em}

\begin{lemma}\label{LBT}
For every $t\geq 3$,  $B_t$ is $\mathcal{R}((t-2)K_2,2)$-saturated.
\end{lemma}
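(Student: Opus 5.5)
The plan is to reduce everything to two base gadgets and then glue them with Lemma~\ref{L515}. Note that $(t-2)K_2$ is endpoint-less, so Lemma~\ref{L515} applies to its pieces. The base cases are:
\begin{enumerate}
\item $A_2$ is $\mathcal{R}(K_2,2)$-saturated. This is Lemma~\ref{L516} with $m=2$ and covers $t=3$.
\item $K_5^{\mathrm{rb}}$ is $\mathcal{R}(2K_2,2)$-saturated. This has to be checked directly.
\end{enumerate}

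For (ii) I would verify (a)--(c) of Definition~\ref{D510}.
\begin{itemize}
\item[(a)] A copy of $2K_2\cup K_2=3K_2$ needs six vertices, but $K_5$ has only five, so there is no rainbow copy.
\item[(b)] Fix a vertex $v$ and a color $c$. Deleting $v$ leaves a rainbow $K_4$, which contains three disjoint perfect matchings of size two. The single edge colored $c$ lies in at most one of them, so some $2K_2$ in $K_4$ avoids both $v$ and $c$.
\item[(c)] $K_5$ has no nonedges, so this holds vacuously.
\end{itemize}

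Next I would glue the pieces. For even $t\ge4$, induct on the number of $K_5^{\mathrm{rb}}$ blocks. Suppose $G_1=\frac{t-4}{2}K_5^{\mathrm{rb}}$ is $\mathcal{R}((t-4)K_2,2)$-saturated and $G_2=K_5^{\mathrm{rb}}$ is $\mathcal{R}(2K_2,2)$-saturated, with disjoint color sets. The "moreover" part of Lemma~\ref{L515} then needs $G_2$ to contain no rainbow copy of a component of $H_1=(t-4)K_2$, that is, no rainbow $K_2$. This fails, since $K_5$ has edges.

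So I would redo the freeness argument directly, and expect this step to be the main obstacle. The exact hypothesis of Lemma~\ref{L515} is not available, so saturation part (a) must be proved by hand. Properties (b) and (c) still follow from the semisaturated half of Lemma~\ref{L515}.

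For freeness I would count disjoint edges. $B_t$ is a disjoint union of components, so a matching in it is a union of matchings in the components.
\begin{itemize}
\item Each $K_5^{\mathrm{rb}}$ has at most $\lfloor 5/2\rfloor=2$ disjoint edges.
\item $A_2$ has $4$ vertices, and a rainbow matching in it has at most one edge. Its two perfect matchings are $\{v_1v_2,v_3v_4\}$, which is monochromatic, $\{v_1v_3,v_2v_4\}$, also monochromatic, and $\{v_1v_4,v_2v_3\}$, also monochromatic.
\end{itemize}
For even $t$ the largest rainbow matching therefore has $2\cdot\frac{t-2}{2}=t-2$ edges. For odd $t$ it has $2\cdot\frac{t-3}{2}+1=t-2$ edges. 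Since $(t-2)K_2\cup K_2$ is a matching of size $t-1$, no rainbow copy exists, which gives (a).

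For (b) and (c), the semisaturated half of Lemma~\ref{L515} combined inductively with the base cases gives the result immediately. At each step I would add one $K_5^{\mathrm{rb}}$ block, starting from $K_5^{\mathrm{rb}}$ or from $A_2$. Every block uses its own color set, as Lemma~\ref{L515} requires. This completes the plan.
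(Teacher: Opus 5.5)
Your proposal is correct and is essentially the paper's argument: the same block facts (a rainbow matching in $A_2$ has at most one edge, $K_5^{\mathrm{rb}}$ contains no $3K_2$, and the three edge-disjoint perfect matchings of $K_4$ give property (b)), the same capacity count $t-2$ for property (a), and for (b) and (c) your inductive use of the semisaturated half of Lemma~\ref{L515} simply packages the paper's direct observation that every nonedge joins two blocks. You are also right that the ``moreover'' clause of Lemma~\ref{L515} is unavailable here (every block contains a rainbow $K_2$), which is exactly why (a) has to be checked by hand, as the paper does.
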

\noindent {\bf Proof.}
By Lemma~\ref{L516}, $A_2$ contains no rainbow $2K_2$ and, for every prescribed vertex of $A_2$ and color, $A_2$ contains an edge avoiding both. A rainbow $K_5$ contains no $3K_2$. Moreover, after deleting an arbitrary vertex of $K_5$ and excluding one prescribed color, the remaining $K_4$ still contains a rainbow $2K_2$, because at least one of its three perfect matchings avoids that color.

The graph $B_t$ is a disjoint union of these blocks, whose rainbow matching capacities add up to $t-2$.
This proves properties~(a) and~(b) of Definition~\ref{D510}. For~(c), every nonedge $e$ of $B_t$ joins two different blocks.
Then $B_t+_\alpha e$ contains a rainbow $(t-1)K_2$ formed by $e$ together with a maximum matching that avoids the endpoints of $e$ and the color $\alpha$ in each block. \qed
\vspace{0.3em}

\begin{lemma}\label{LBTextension}
Let $G_0=G_2,G_3,\text{and }G_4$ respectively when $q\ge 3$, $q=2$ and $q=1$. For $t\geq3$, let $H_t$ be an edge-colored graph with vertex set  $V(G_0)\cup V(B_t)$ and edge set
\[
	E(H_t)=E(G_0)\cup E(B_t)\cup \{xy: x\in V(B_t),y\in V(C)\},
\]
where the three edge sets $E(G_0)$, $E(B_t)$, and $E(V(B_t), C)$ use pairwise disjoint color sets, and the edges of $E(V(B_t), C)$ moreover receive pairwise distinct colors (see Figures~\ref{fig:Gt2}--\ref{fig:Gt4}). Then $H_t$ is $F_{t,2,q}$-rainbow saturated and has $(q+1)n+O_{t,q}(1)$ edges, where $q\ge 1$ and $n=|V(H_t)|$.
\end{lemma}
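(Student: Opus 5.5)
The proof splits into three parts: the edge count, rainbow-freeness, and saturation.

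\emph{Edge count.} Since $|V(G_0)\setminus R|$ and $|V(B_t)|$ are bounded, almost every vertex lies in $R$. Each vertex of $R$ contributes exactly $|C|=q+1$ edges, because $|C|=q+1$ in all three cases $G_2,G_3,G_4$. Hence $e(H_t)=(q+1)n+O_{t,q}(1)$.

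\emph{Rainbow-freeness.} Suppose $J$ is a rainbow copy of $F_{t,2,q}=tK_2\vee K_q$ in $H_t$, with center $Q$ and petals $P_1,\dots,P_t$. The center vertices have degree $2t+q-1$ in $J$, and every vertex of $R$ or of $B_t$ has at most $q+1$ neighbours outside its own bounded block. From this I will argue that $V(Q)\subseteq V(G_0)\setminus R$, or more precisely that $Q$ lies where it would lie in the base case: a center vertex inside $B_t$ would force all of $J$ into $V(B_t)\cup C$, which is too small or violates the rainbow-matching capacity of a block. Every petal lying inside $V(B_t)$ is an edge of $B_t$. Since $B_t$ is $\mathcal{R}((t-2)K_2,2)$-saturated (Lemma~\ref{LBT}), property (a) gives at most $t-2$ rainbow disjoint such edges. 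Petals using an edge of $E(V(B_t),C)$ consume a vertex of $C$ outside $Q$.

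So at least two petals, together with $Q$, form a rainbow $2K_2\vee K_q$ in $G_0$, possibly with some vertices of $C$ replaced by $B_t$-vertices. I will reduce this to the rainbow-freeness parts of Propositions~\ref{P520}--\ref{P522}. The main obstacle is the case where a petal uses a $B_t$--$C$ edge, which frees the $C$-vertex budget differently from the base case. I plan to handle it by counting: $C\setminus V(Q)$ has only $q+1-|V(Q)\cap C|$ vertices, and I will compare this with the endpoints available in $S$. Concretely, if $V(Q)\subseteq C$ then only one vertex of $C$ is free, so at most one petal touches $C$ or $B_t$ through a join edge. Then at least $t-(t-2)-1=1$ further petal lies in $S\cup R$, and I will reuse the base-case contradiction with that petal together with the $C$-petal replaced by an $S$-vertex analogue. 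Because the colors of $E(V(B_t),C)$ are fresh, such a replacement preserves the relevant color conflicts. If $Q$ meets $S$, then no vertex of $B_t$ is adjacent to $Q$, so all petals lie in $G_0$ and the base proposition applies directly.

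\emph{Saturation.} Fix a nonedge $e$ and a color $\alpha$. I split into cases.
\begin{enumerate}
\item If $e$ lies in $G_0$, the base proposition gives a rainbow $2K_2\vee K_q$ containing $e$ and avoiding $\alpha$. I add $t-2$ disjoint rainbow edges of $B_t$ avoiding $\alpha$, by Lemma~\ref{LBT}(b), and join them to $Q$. This requires $V(Q)\subseteq C$, which holds for the copies $\Phi,\Psi,\Omega$ used in the base cases. The join edges have fresh distinct colors, and only one of them can carry $\alpha$; I will choose the block edges so as to avoid that endpoint, using property (b) with the offending vertex.
\item If $e$ lies inside $V(B_t)$, I use property (c) of $B_t$ to get $t-1$ rainbow disjoint edges containing $e$, take $Q$ to be a $K_q$ inside $C$ avoiding $\alpha$, and obtain the last petal from a vertex of $R$ and the remaining $C$-vertex, or from an edge of $S$ avoiding $\alpha$.
\item If $e$ joins $B_t$ to $S\cup R$, I take $e$ as a petal, use $t-2$ rainbow edges of $B_t$ avoiding the endpoint of $e$ and $\alpha$, and again take $Q\subseteq C$. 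One petal then remains, and I take it in $S$ (or in $R\cup C$), choosing $Q$ and that petal to dodge $\alpha$ as in the base case.
\end{enumerate}
In every case the freshness of the join colors means that $\alpha$ eliminates at most one choice, and the slack in $C$ of size $q+1$ versus $q$ absorbs it.
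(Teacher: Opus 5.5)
Your plan follows the paper's proof closely. The shared steps are:
\begin{itemize}
\item the edge count from $|C|=q+1$;
\item forcing $V(Q)\subseteq V(C)$ by degree and size counts for $R$ and $B_t$, and by $N[s]\subseteq V(S)\cup V(C)$ for $s\in V(S)$;
\item at most $t-2$ petals inside $B_t$ by Lemma~\ref{LBT}, and at most one petal through the free vertex $c_j\in V(C)\setminus V(Q)$;
\item for saturation, base copies centred in $C$ combined with Lemma~\ref{LBT}(b),(c), with the spare vertex of $C$ absorbing a $B_t$--$C$ colour. This is what the paper packages as its properties (i)--(iii).
\end{itemize}

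The step that fails as written is the cross petal $bc_j$ with $b\in V(B_t)$. Here the paper argues directly, while you reduce to the base graph by putting an $S$-vertex in place of $b$. That substitution only transfers rainbowness if the new edges from the substituted vertex to $C$ carry colours that occur nowhere else. This holds in $G_2$, but not in $G_3$. There $s_ic_j$ shares $\delta_{ij}$ with $c_is_{j+3}$ and $s_is_{j+3}$. For instance, with $c_j=c_1$ and $c_2\in V(Q)$, putting $s_2$ in place of $b$ adds $s_2c_1$ of colour $\delta_{21}$. That colour repeats on $c_2s_4$ whenever $s_4$ lies on your $S$-petal. So the modified copy can fail to be rainbow because of edges not in $J$, and Proposition~\ref{P521} then says nothing about $J$. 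The freshness of $E(V(B_t),C)$, which you invoke, is beside the point: what matters is the freshness of the edges you add, not of those you remove.

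There are two ways to repair this step.
\begin{itemize}
\item \emph{Substitute a vertex $r\in R$ instead.} In this case $R\cap V(J)=\emptyset$, since an $R$-vertex in a petal needs its partner in $V(C)\setminus V(Q)=\{c_j\}$, which $b$ occupies. The edges from $r$ to $C$ have pairwise distinct fresh colours in $G_0$. Hence $(c_jr\cup P)\vee Q$, with $P$ your $S$-petal, would be a rainbow $2K_2\vee K_q$ in $G_0$, a contradiction.
\item \emph{Argue directly, as the paper does.} $J$ contains every edge of $C$. So an $S$-petal $s_as_\ell$ of colour $\gamma_{a\ell}$ repeats the colour of $c_ac_\ell$. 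For $q=2$, a $\delta_{ij}$-petal $s_is_{j+3}$ repeats its colour on $s_ic_j$ or on $c_is_{j+3}$, and at least one of these joins the petal to $Q$ because $i\neq j$. For $q=1$ the case cannot occur, since $c_1c_2\notin E(H_t)$.
\end{itemize}
If no petal uses $c_j$ at all, you get two $S$-petals and the base case applies directly; state this subcase explicitly.

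A smaller point concerns saturation case~1 for $q=1$. There you justify $V(Q)\subseteq V(C)$ only via $\Phi,\Psi,\Omega$, which do not exist for $G_4$. Moreover, the copy $(c_1c_2\cup s_as_b)\vee s_k$ used in Proposition~\ref{P522} for $e=c_1c_2$ is centred in $S$, so the $B_t$-petals cannot be attached to it. You need a copy such as $(c_2r\cup s_as_b)\vee c_1$, with $s_as_b$ and $r$ chosen so that none of $s_as_b$, $c_1s_a$, $c_1s_b$, $c_1r$, $c_2r$ has colour $\alpha$. This is always possible, because each $S$--$C$ colour occurs exactly once at $c_1$.
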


We will call $G_2,G_3,G_4$ the base graphs of $H_t$ and the colors used in $G_i$ ($2\le i\le 4$) the base colors.

\noindent{\bf Proof.} By Propositions~\ref{P520}--\ref{P522}, $G_2$ (resp. $G_3$ and $G_4$) is $(2K_2\vee K_q)$-rainbow saturated with $q\ge 3$ (resp. $(2K_2\vee K_2)$-rainbow saturated and $(2K_2\vee K_1)$-rainbow saturated).
They also have the following color-avoiding choices with the corresponding
value of $q$:
\begin{enumerate}
	\item every nonedge in the base graphs is saturated by a rainbow copy of $(2K_2\vee K_q)$ whose center is contained in $C$;
	\item for every $x\in V(S)\cup R$ and every  base color $\alpha$, there is a rainbow copy
	\[
		M\vee Q\cong K_2\vee K_q,~~\mbox{where}~~M\cong K_2,~~Q\cong K_q \mbox{~and~} V(Q)\subseteq V(C),
	\]
	such that $x\notin V(M)$, and each edge in $E(M \vee Q) \cup E(\{x\},V(Q))$ reserves a distinct color other than the color $\alpha$;
	\item for every $1\le j\le q+1$ and every $x\in V(S)\cup R$, there is $M\vee Q\cong K_2\vee K_q$ with $M\cong K_2$, $Q\cong K_q$ and $V(Q)=V(C)\setminus\{c_j\}$ and $x\notin V(M)$, and each edge in $E(M \vee Q) \cup E(\{x\},V(Q))$ reserves a distinct color.
\end{enumerate}

For $G_2$, these choices follow from the displayed families $\Phi_{ij}$, $\Psi_{ij}$ and $\Omega_{ij}$.
For $G_3$, use one of the two copies on $\{s_1,s_2,s_3\}\cup V(C)$ and $\{s_4,s_5,s_6\}\cup V(C)$; in (iii), one may choose proper $i$ and take
\(
s_is_j\vee(C[V(C)\setminus\{c_j\}]) \text{ or }  s_{i+3}s_{j+3}\vee(C[V(C)\setminus\{c_j\}]),
\)
according as $x\notin\{s_1,s_2,s_3\}$ or $x\notin\{s_4,s_5,s_6\}$, with $i\neq j$.
For $G_4$, the three monochromatic matchings and the paired $S$--$C$ colors give the same choices. These are finite checks from the displayed color identifications.

\textit{Rainbow-freeness.}
Suppose that $H_t$ contains a rainbow copy $J$ of $tK_2\vee K_q$, with center $Q$. It is not difficult to check that  $V(Q)\cap (V(S)\cup R)=\emptyset$. For any $u\in V(B_t)$, $d_{H_t}(u)\le q+4$
 when $t=3$ and $d_{H_t}(u)\le q+5$ when $t\geq4$, whereas a center vertex requires $q-1+2t$ neighbors. Hence $V(Q)\cap V(B_t)=\emptyset$. Thus $V(Q)=V(C)\setminus\{c_j\} $ for some $j$, say $V(Q)=V(C)\setminus\{c_1\} $.

By Lemma~\ref{LBT} and Definition~\ref{D510}(a), at most $t-2$ petals of $J$ lie completely in $B_t$. If no petal joins $B_t$ to $C$, at least two petals lie in $G_0$, a contradiction with $G_0$ being $(2K_2\vee K_q)$-rainbow saturated. Suppose there is a petal joining $B_t$ to $C$. Then  the unique possible cross-petal edge is $bc_1$ with $b\in V(B_t)$ which implies $V(J)\cap R=\emptyset$. Thus there is at least one petal of $J$, say $s_as_\ell$, lies completely in $S$.
When $q\geq3$, $s_as_\ell$ repeats the color of $c_ac_\ell$ either inside $Q$ or on an edge from $c_1$ to $Q$. When $q=2$, if the petal $s_as_\ell$ with the color $\gamma_{a\ell}$, then $s_as_\ell$ repeats a color used by $C$; if the petal $s_as_\ell$ with the color $\delta_{a(\ell-3)}$, then $s_as_\ell$  repeats one of its paired $S$--$C$ colors. When $q=1$, the cross-petal $bc_1$ is impossible because $c_1c_2\notin E(H_t)$. Thus $J$ has at most $t-1$ petals in every case, a contradiction.

\textit{Saturation.}
Fix a nonedge $e=uv$ and a color $\alpha$ to be avoided. If $u,v\in V(G_0)$, by property (i) above, there is a rainbow copy of $(2K_2\vee K_q)$ in $G_0+_{\alpha}e$ whose center is contained in $C$. By Lemma~\ref{LBT},  there are $t-2$ petals in $B_t$. We choose these $t-2$ petals to avoid $\alpha$ when $\alpha$ is used in $B_t$, and to avoid $b$ when $\alpha=\mathcal C(bc_j)$ where $b\in V(B_t)$ and $c_j\in V(C)$.

If $u,v\in V(B_t)$, by Lemma~\ref{LBT}, there are $t-1$ rainbow petals in $B_t+_{\alpha}e$ containing $e$. Complete the copy by the property (ii) when $\alpha$ is a base color, by the property (iii) when $\alpha=\mathcal C(bc_j)$ where $b\in V(B_t)$ and $c_j\in V(C)$, and by an arbitrary base petal and center when $\alpha$ is used in $B_t$.

Finally, suppose that  $u\in V(B_t)$ and $v\in V(S)\cup R$. Use $e$ as one petal and apply Definition~\ref{D510}(b) (via Lemma~\ref{LBT}) to obtain $t-2$ further rainbow petals in $B_t$, avoiding $u$ and $\alpha$ (when relevant). Property (ii) supplies the last base petal and center when $\alpha$ is a base color, and property (iii) does so when $\alpha=\mathcal{C}(b'c_j)$ for any $b'\in V(B_t)$ where the center $Q=C[V(C)\setminus\{c_j\}]$.
When $\alpha$ is used in $B_t$, take any choice in (iii).

The fresh join colors ensure that each selected copy in $H_t+_\alpha e$ is rainbow, contains $e$, and all its edges in $H_t$ avoid $\alpha$. \qed
\vspace{0.5em}

Figures~\ref{fig:case2}--\ref{fig:case4} show the three base graphs, and Figures~\ref{fig:Gt2}--\ref{fig:Gt4} show the corresponding graphs $H_t$.

\casefigqgeqthree
\casefigqtwo
\casefigqone
\FloatBarrier

\noindent {\bf Proof of Theorem~\ref{TFriend}.} The lower bound was proved at the beginning of this section. For the upper bound, Proposition~\ref{P519} handles $p\geq3$. When $p=2$, Propositions~\ref{P520}--\ref{P522} provide the three base graphs and Lemma~\ref{LBTextension} extends them to every $t\geq3$; the propositions themselves handle $t=2$. In every case the construction has $(p+q-1)n+O(1)$ edges. Therefore
\[
rsat(n,F_{t,p,q})=(p+q-1)n+O(1).\tag*{\qed}
\]

\section{Concluding remarks}

We have shown that isolated edges determine the order of magnitude of the rainbow saturation number: for fixed graphs without isolated vertices, $rsat(n,F)$ is bounded when $F$ contains an isolated edge, and is linear otherwise.

Theorem~\ref{TLower} gives a universal linear lower bound for graphs without an isolated edge, so a natural question is for which graphs the bound is sharp. This leads to the following problem.
\begin{prob}\label{prob:sharp}
Characterize the graphs $F$ without an isolated edge for which
\[
rsat(n,F)=\frac{\eta(F)}{2}n+o(n).
\]
\end{prob}

For generalized friendship graphs, Theorem~\ref{TFriend} determines the asymptotic behavior and leaves a more concrete exact problem.
\begin{prob}\label{prob:friendship}
For fixed integers $t\geq2$, $p\geq2$, and $q\geq1$, determine the exact value of $rsat(n,F_{t,p,q})$ for all sufficiently large $n$.
\end{prob}

Theorem~\ref{TFriend} assumes $p\geq2$ and therefore does not cover the case
\(
F_{t,1,q}=(tK_1)\vee K_q,
\)
where a clique $K_q$ is joined to an independent set of size $t$. When $q=1$, this graph is a star, which is discussed in Remark~\ref{R24}. This leads to the following question for $q\geq2$.
\begin{prob}\label{prob:friendship-p-one}
For fixed integers $t\geq2$ and $q\geq2$, determine
\(
rsat(n,F_{t,1,q}).
\)
\end{prob}

\section*{Acknowledgement}
The authors used AI
 for grammar and style checking.
It was not used to develop, verify, or modify any proof or mathematical argument in this paper.
After using this tool, the authors reviewed and edited the content as needed and take full responsibility for the content of the article.

The research of Lu is supported by the National Natural Science Foundation of China (No.~12171272). The research of Xu is supported by China Scholarship Council (No. 202606210184).


\begin{thebibliography}{99}

	\bibitem{BEH} N. Behague, T. Johnston, S. Letzter, N. Morrison and S. Ogden, The rainbow saturation number is linear, SIAM J. Discrete Math. 38 (2024), pp. 1239--1249.

	\bibitem{BR} C. Buchanan and M.P. Rombach, A lower bound on the saturation number and a strengthening for triangle-free graphs, Electron. J. Combin. 32 (2025), \#P3.14.

	\bibitem{Ca} A. Cameron and G.J. Puleo, A lower bound on the saturation number, and graphs for which it is sharp, Discrete Math. 345 (2022), 112867.

	\bibitem{CHA} D. Chakraborti, K. Hendrey, B. Lund and C. Tompkins, Rainbow saturation for complete graphs, SIAM J. Discrete Math. 38 (2024), pp. 1090--1112.

	\bibitem{survey} B.L. Currie, J.R. Faudree, R.J. Faudree and J.R. Schmitt, A survey of minimum saturated graphs, Electron. J. Combin. 18 (2011), \#DS19.

	\bibitem{EHM} P. Erd\H os, A. Hajnal and J.W. Moon, A problem in graph theory, Amer. Math. Monthly 71 (1964), pp. 1107--1110.

	\bibitem{Gir} A. Gir\~ao, D.C. Lewis and K. Popielarz, Rainbow saturation of graphs, J. Graph Theory 94 (2020), pp. 421--444.

	\bibitem{KT} L. K\'aszonyi and Z. Tuza, Saturated graphs with minimal number of edges, J. Graph Theory 10 (1986), pp. 203--210.

	\bibitem{Xu} Y. Xu, Z. He and M. Lu, The rainbow saturation number of cycles, arXiv:2501.06782.

\end{thebibliography}
\end{document}